\numberwithin{equation}{section}
\theoremstyle{plain}
\newtheorem{theorem}{Theorem}[section]
\newtheorem{lemma}[theorem]{Lemma}
\newtheorem{corollary}[theorem]{Corollary}
\newtheorem{proposition}[theorem]{Proposition}
\newtheorem{theorem-definition}[theorem]{Theorem and Definition}
\newtheorem{lemma-definition}[theorem]{Lemma and Definition}
\newtheorem*{theorem*}{theorem}
\newtheorem*{theoremA*}{Theorem A}
\newtheorem*{corollaryB*}{Corollary B}
\newtheorem*{theoremC*}{Theorem C}
\newtheorem*{satellite*}{Satellite to the Main Theorem}
\newtheorem{realcrit}[theorem]{Realizability Criterion}
\theoremstyle{definition}
\newtheorem{definition}[theorem]{Definition}
\newtheorem{definitions}[theorem]{Definitions}
\newtheorem{observation}[theorem]{Observation}
\newtheorem*{observation*}{Observation}
\newtheorem{example}[theorem]{Example}
\newtheorem{examples}[theorem]{Examples}
\newtheorem{remarks}[theorem]{Remarks}
\newtheorem*{remark*}{Remark}
\newtheorem{subsec}[theorem]{}
\def\edge{\ar@{-}}
\def\dttdar{\ar@{.>}}
\def\drbl{\save+<0ex,-2ex> \drop{\bullet} \restore}
\def\horizpool#1{  \save[0,0]+(0,3);[0,0]+(0,3) **\crv{~*=<2.5pt>{.} [0,#1]+(0,3) &[0,#1]+(3,3) &[0,#1]+(3,0) &[0,#1]+(3,-3) &[0,#1]+(0,-3) &[0,0]+(0,-3) &[0,0]+(-3,-3) &[0,0]+(-3,0) &[0,0]+(-3,3)}\restore }
\def\dashedge{\ar@{--}}
\def\dshdar{\ar@{-->}}
\def\dropup#1{\save+<0ex,4ex> \drop{#1} \restore}
\def\fatedge#1{\edge@<0.025pc>[#1] \edge@<-0.025pc>[#1] \edge@<0.05pc>[#1]  \edge@<-0.05pc>[#1] \edge[#1]}
\def\vsubseteq{\hbox{$\bigcup$\kern0.1em\raise0.05ex\hbox{$\textstyle|$}}}
\def\smvsubseteq{\hbox{$\ssize\bigcup$\kern0.03em\raise0.05ex\hbox{$\ssize|$}}}
\def\la{{\Lambda}}
\DeclareMathOperator \trunc {trunc}
\def\latrunc{\la_{\trunc}}
\def\Ptrunc{P_{\trunc}}
\def\lamod{\Lambda\mbox{\rm-mod}}
\def\latruncmod{\latrunc\mbox{\rm-mod}}
\def\SS{{\mathbb S}}
\def\ZZ{{\mathbb Z}}
\def\NN{{\mathbb N}}
\DeclareMathOperator \len {length}
\DeclareMathOperator \Aut {Aut}
\DeclareMathOperator \soc {soc}
\DeclareMathOperator \strt {start}
\DeclareMathOperator \term {end}
\DeclareMathOperator \GL {GL}
\DeclareMathOperator \Img {Im}
\DeclareMathOperator \Seq {\mathbf {Seq}}
\DeclareMathOperator \End {End}
\DeclareMathOperator \Filt {\mathbf{Filt}}
\DeclareMathOperator \udim {\underline{dim}}
\DeclareMathOperator \trdeg {tr deg}
\def\C{{\mathcal C}}
\def\D{{\mathcal D}}
\def\G{{\mathcal G}}
\def\K{{\mathcal K}}
\def\P{{\mathcal P}}
\def\R{{\mathcal R}}
\def\S{{\sigma}}
\def\U{{\mathfrak U}}
\def\bB{\mathbf {B}}
\def\bP{\mathbf {P}}
\def\bd{\mathbf {d}}
\def\bp{\mathbf {p}}
\def\bq{\mathbf {q}}
\def\SStilde{\widetilde{\SS}}
\def\bP{\mathbf{P}}
\def\SShat{\widehat{\SS}}
\def\SStilde{\widetilde{\SS}}
\def\bz{\mathbf{z}}
\DeclareMathOperator \Rep {\mathbf{Rep}}
\DeclareMathOperator \laySS {\Rep \SS}
\newcommand \modlad {\Rep_\bd(\Lambda)}
\DeclareMathOperator \scaledrep {\Delta-{\Rep}}
\DeclareMathOperator \Corep {\mathbf{Corep}}
\DeclareMathOperator \Cofilt {\mathbf{Cofilt}}
\DeclareMathOperator \Hom {Hom}
\DeclareMathOperator \Gr {Gr}
 \DeclareMathOperator \biggrass {GRASS}
\DeclareMathOperator \Schu {\bf {Schu}}
\DeclareMathOperator \flag {\mathfrak{Flag}}
\DeclareMathOperator \GRASS {GRASS}
\def\grassbd{\GRASS_\bd(\Lambda)}
\def\biggrassSS{\GRASS(\SS)}
\def\biggrassS{\GRASS(\S)}
\def\Gabull{\Gamma_\bullet}
\title{Closures in varieties of representations and irreducible components}
\author[K. R. Goodearl]{K. R. Goodearl}
\address{
Department of Mathematics \\
University of California\\
Santa Barbara, CA 93106 \\
U.S.A.
}
\email{goodearl@math.ucsb.edu}
\author[B. Huisgen-Zimmermann]{B. Huisgen-Zimmermann}
\address{
Department of Mathematics \\
University of California\\
Santa Barbara, CA 93106 \\
U.S.A.
}
\email{birge@math.ucsb.edu}
\begin{document}

\dedicatory{Dedicated to the memory of Peter Gabriel}
  
\begin{abstract} For any truncated path algebra $\la$ of a quiver, we classify, by way of representation-theoretic invariants, the irreducible components of the parametrizing varieties $\modlad$ of the $\la$-modules with fixed dimension vector $\bd$.  In this situation, the components of $\modlad$ are always among the closures $\overline{\laySS}$, where $\SS$ traces the semisimple sequences with dimension vector $\bd$, and hence the key to the classification problem lies in a characterization of these closures.    

Our first result concerning closures actually addresses arbitrary basic finite dimensional algebras over an algebraically closed field.  In the general case, it corners the closures $\overline{\laySS}$ by means of module filtrations ``governed by $\SS$"; in case $\la$ is truncated, it pins down the $\overline{\laySS}$ completely.

The analysis of the varieties $\overline{\laySS}$ leads to a novel upper semicontinuous module invariant which provides an effective tool towards the detection of components of $\modlad$ in general.  It detects all components when $\la$ is truncated.
\end{abstract}

\subjclass[2010]{Primary 16G10; secondary 16G20, 14M99, 14M15}

\keywords{Varieties of representations; irreducible components; generic properties of representations}

\maketitle

\section{Introduction}
\label{intro}

By strong consensus, a classification of all indecomposable finite dimensional representations of a finite dimensional algebra $\la$ is  an unattainable goal in general.  A far more promising alternative to this impossibly comprehensive problem is that of {\it generically\/} classifying the finite dimensional $\la$-modules.  This amounts to understanding the generic structure of the modules in the irreducible components of the varieties $\modlad$ which parametrize the $\la$-modules with dimension vector $\bd$. By its very nature, this quest comes paired with the task of pinning down the irreducible components of the $\modlad$ in representation-theoretic terms.

In the present article, the component problem is solved for arbitrary truncated path algebras $\la$ over an algebraically closed field $K$.   In tandem, significant headway is made towards determining the generic features of the modules in the components.  

The classification of the components, in turn, relies on a characterization of the modules in the closures of certain representation-theoretically defined locally closed subvarieties of $\modlad$.  Our initial round of results regarding such closures, including the description of an associated upper semicontinuous module invariant which serves to test for inclusions, holds for arbitrary basic finite dimensional $K$-algebras. The findings lead to partial lists of components in this broad scenario. The results become tight on specialization to the truncated case.

Throughout, we assume $K$ to be an algebraically closed field and $\la$ a basic finite dimensional $K$-algebra.  This means that, up to isomorphism, $\la = KQ/I$ for a quiver $Q$ and an admissible ideal $I$ in the path algebra.  The maximal length of a path in $KQ \setminus I$ will be denoted by $L$; in other words, $L$ is minimal with respect to $J^{L+1} = 0$, where $J$ is the Jacobson radical of $\la$.  Consequently, the {\it radical layering\/} $\SS(M)$ of a $\la$-module $M$ has no more than $L+1$ nonzero entries:  $\SS(M) = (J^l M / J^{l+1} M)_{0 \le l \le L}$.
By $\modlad$, we  denote the standard affine variety parametrizing the $\la$-modules with dimension vector $\bd$. This variety is partitioned into finitely many locally closed subvarieties $\laySS$ corresponding to the {\it semisimple sequences $\SS$ with dimension vector $\bd$\/}; these are the sequences $\SS = (\SS_0, \dots, \SS_L)$ of (isomorphism classes of) semisimple $\la$-modules with $\udim \SS  := \sum_{0 \le l \le L} \udim \SS_l = \bd$; here $\laySS$ consists of those points $x$ in $\modlad$ which represent modules $M_x$ with $\SS(M_x) = \SS$.  

The closures $\overline{\laySS}$ are relevant towards the problem of describing the irreducible components of $\modlad$:  Indeed, it is readily seen that the components of the ambient variety are always among those of the $\overline{\laySS}$, where $\SS$ traces the $\bd$-dimensional semisimple sequences.  Less obviously, the components of the subvarieties $\laySS$, and hence those of their closures, may be obtained from quiver and relations by way of a straightforward algorithm, each component tagged by a ``generic minimal projective presentation" of the modules it encodes (see \cite{BHT} and \cite{hier}).  Identifying the components of $\modlad$ thus amounts to a sorting problem:  For which components $\C$ of $\laySS$ is the closure $\overline{\C}$ {\it maximal\/} among the irreducible subsets of $\modlad$?   This is an extremely taxing question in general, calling for a thorough understanding of the boundaries of the varieties $\laySS$. 

Our strategy consists of moving back and forth between the varieties $\modlad$ and $\grassbd$; the latter is a closed subvariety of a vector space Grassmannian which parametrizes the modules with dimension vector $\bd$ by suitable submodules of a projective cover of the semisimple module with this dimension vector (see Section \ref{sec2} and \cite{hier, GoHZ}). The irrreducible components of the projective variety $\grassbd$ may be studied by ``spreading them out" within a suitable flag variety (Theorem \ref{prop3.9}), and the subsequent transfer of information $\grassbd \longleftrightarrow \modlad$ is modeled on Gabriel's influential work in \cite{Gab}. In a first step, we show:

\begin{theoremA*}  {\rm (cf.~\ref{thm3.8} and \ref{thm4.3}; see also \ref{rem3.7}(4).)}  Let $\la = KQ/I$ be a path algebra modulo relations, $L+1$ its Loewy length, and $\SS = (\SS_0, \dots, \SS_L)$ a $\bd$-dimensional semisimple sequence in $\lamod$.   Then every module in the closure $\overline{\laySS}$ has a filtration by submodules, 
$$M = M_0 \supseteq M_1 \supseteq \cdots \supseteq M_{L+1} = 0,$$
 which is ``governed by $\SS$" in the sense that  the quotients $M_l / M_{l+1}$ are semisimple and isomorphic to $\SS_l$, respectively.  
 In fact, the set $\Filt \SS$ consisting of those points in $\modlad$ that correspond to modules with at least one filtration governed by $\SS$ is always closed. 
 \smallskip

If $\la$ is a truncated path algebra, i.e., $\la = KQ/ \langle \text{all paths of length}\ L+1 \rangle$, and $\laySS$ is nonempty, then $$\overline{\laySS} = \Filt \SS.$$      
\end{theoremA*}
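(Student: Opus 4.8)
The plan is to deduce the easy inclusion from Theorem A and to supply the reverse one by exhibiting all of $\Filt\SS$ inside a single flat family whose general member has radical layering exactly $\SS$. By the part of Theorem A that is valid over an arbitrary $\la$ (cf.~\ref{thm3.8}), $\overline{\laySS}\subseteq\Filt\SS$ and $\Filt\SS$ is closed, so it remains only to prove $\Filt\SS\subseteq\overline{\laySS}$. To that end I would fix vertex-graded vector spaces $V_0,\dots,V_L$ with $\udim V_l=\udim\SS_l$, so that $V:=\bigoplus_{l}V_l$ is a vertex-graded space of dimension vector $\bd$ which serves as the common underlying space of the modules parametrized by $\modlad$; write $F_l=\bigoplus_{k\ge l}V_k$, a descending flag with $F_{L+1}=0$. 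Let $\A$ be the affine space of all tuples $\tau=(\tau^{\alpha}_{l,k})$ indexed by the arrows $\alpha$ of $Q$ and the pairs $0\le l<k\le L$, where $\tau^{\alpha}_{l,k}$ is a linear map between the vertex components of $V_l$ and of $V_k$ at the source and the target of $\alpha$, respectively; and let $\Psi(\tau)$ be the representation in which $\alpha$ acts on $V_l$ by $\sum_{k>l}\tau^{\alpha}_{l,k}$, i.e.\ block-strictly-upper-triangularly with respect to $F_\bullet$. The one essential use of truncation is the observation that $\Psi(\tau)$ is a $\la$-module for \emph{every} $\tau$: each arrow sends $F_m$ into $F_{m+1}$, so any path of length $L+1$ kills $V$ outright and the relations of $\la$ hold automatically. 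Plainly $\Psi\colon\A\to\modlad$ is a morphism of varieties.

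Two bookkeeping points then organize everything. First, $\Psi(\A)\subseteq\Filt\SS$: the $F_l$ are submodules of $\Psi(\tau)$ with semisimple quotients $F_l/F_{l+1}\cong V_l$ of dimension vector $\udim\SS_l$, hence $\cong\SS_l$. Second, every $M\in\Filt\SS$ is isomorphic to some $\Psi(\tau)$: splitting a governing filtration $M_\bullet$ vertex-homogeneously and identifying each $M_l/M_{l+1}$ with $V_l$ puts the arrow actions into the required block form, since semisimplicity of $M_l/M_{l+1}$ forces $\alpha M_l\subseteq M_{l+1}$. Thus $\Filt\SS$ is exactly the $\GL_{\bd}$-saturation of $\Psi(\A)$. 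Now set $\A^{\circ}=\{\tau\in\A:\SS(\Psi(\tau))=\SS\}$. Since one always has $J^{l}\Psi(\tau)\subseteq F_l$, the layering of $\Psi(\tau)$ is dominated by $\SS$, and $\SS(\Psi(\tau))=\SS$ is equivalent to the equalities $\udim J^{l}\Psi(\tau)=\udim F_l$ for $l=1,\dots,L$, each of which asserts maximality of the rank of a matrix depending polynomially on $\tau$; hence $\A^{\circ}$ is open in $\A$. It is nonempty exactly because $\laySS\ne\emptyset$: choosing $N$ with $\SS(N)=\SS$ and splitting its radical filtration realizes $N\cong\Psi(\tau)$ for some $\tau$, which then lies in $\A^{\circ}$. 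As $\A$ is an irreducible affine space and $\A^{\circ}$ a nonempty open subset, $\overline{\A^{\circ}}=\A$.

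The conclusion is then immediate. By construction $\Psi(\A^{\circ})\subseteq\laySS$, so continuity of $\Psi$ gives $\Psi(\A)=\Psi(\overline{\A^{\circ}})\subseteq\overline{\Psi(\A^{\circ})}\subseteq\overline{\laySS}$. Since $\overline{\laySS}$ is $\GL_{\bd}$-stable, it contains the $\GL_{\bd}$-saturation of $\Psi(\A)$, namely $\Filt\SS$. Hence $\Filt\SS\subseteq\overline{\laySS}$, and combined with the reverse inclusion from Theorem A this yields $\overline{\laySS}=\Filt\SS$.

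I expect the only genuinely delicate step to be the design of the parameter space $\A$, specifically the requirement that $\A$ consist \emph{solely} of $\la$-modules; this is precisely where truncation is indispensable, and it explains why the equality $\overline{\laySS}=\Filt\SS$ is a truncated-case phenomenon. Over a general $KQ/I$ the extra relations carve a proper — and possibly reducible — subvariety out of $\A$, so the ``a nonempty open subset of affine space is dense'' engine no longer runs, and the argument collapses. The secondary point to monitor is merely the non-vacuity of $\A^{\circ}$, which is where, and the only place where, the hypothesis $\laySS\ne\emptyset$ is consumed; everything else is routine bookkeeping about filtrations and ranks.
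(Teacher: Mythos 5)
Your proposal is correct and follows essentially the same route as the paper. Your affine space $\A$ and the morphism $\Psi$ are precisely the content of Definitions~\ref{def3.1} and \ref{def3.3} (the variety $\scaledrep(\ge\SS)$, identified with its image in $\modlad$), your $\A^\circ$ is $\scaledrep\SS$, your openness/rank observation is Lemma~\ref{lem3.2}(III), your ``truncation makes the relations automatic, so $\A$ is a full affine space'' step is Observation~\ref{obs4.2}, and your final density-plus-$\GL(\bd)$-saturation argument is exactly the proof of Theorem~\ref{thm4.3}.
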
 

For general $\la$, the inclusion $\overline{\laySS} \subseteq \Filt \SS$ may be proper.  The question of whether a point in $\modlad$ belongs to $\Filt \SS$ may be answered by testing for similarity of certain matrices.  By contrast, to date, there is no algorithm for deciding whether a module belongs to $\overline{\laySS}$.   

A semisimple sequence $\SS$ is called {\it realizable\/} if $\laySS \ne \varnothing$.  (In case $\la$ is a truncated path algebra, realizability is checked via mere inspection of the quiver; see \cite[Criterion 3.2]{irredcompI} and \ref{crit4.1} below.) 

\begin{corollaryB*}  {\rm (cf.~\ref{cor3.11}.)} For $M \in \lamod$, let $\, \Gamma(M)$ be  the number of those realizable semisimple sequences that govern at least one filtration of $M$.  Then 
$$\Gabull: \modlad \rightarrow \NN, \quad x \mapsto \Gamma(M_x),$$
is an upper semicontinuous function.   

In particular:  Whenever $\C$ is an irreducible component of some $\laySS$ such that $1 \in \Gabull(\C)$, the closure $\overline{\C}$ is an irreducible component of $\modlad$.
\end{corollaryB*}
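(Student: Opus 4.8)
The plan is to derive both parts of Corollary B from the single input provided by Theorem A, namely that each locus $\Filt\SS$ is \emph{closed} in the ambient variety, together with the elementary fact that for a fixed dimension vector $\bd$ there are only finitely many $\bd$-dimensional semisimple sequences $\SS$. The point is that a filtration of a module $M\in\modlad$ governed by a semisimple sequence $\SS'$ forces $\udim\SS'=\udim M=\bd$, so only those finitely many sequences can contribute to $\Gamma(M_x)$.

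\textbf{Upper semicontinuity.} For $n\in\NN$ I would express the superlevel set using Theorem A as
$$\{x\in\modlad:\Gabull(x)\ge n\}=\bigcup\Bigl(\Filt\SS^{(1)}\cap\cdots\cap\Filt\SS^{(n)}\Bigr),$$
the union ranging over all $n$-element subsets $\{\SS^{(1)},\dots,\SS^{(n)}\}$ of the (finite) set of realizable $\bd$-dimensional semisimple sequences: indeed $\Gabull(x)\ge n$ exactly means $M_x$ has at least one filtration governed by each of $n$ distinct realizable sequences, and "$M_x$ has a filtration governed by $\SS$" is precisely the condition $x\in\Filt\SS$. Since each $\Filt\SS$ is closed (Theorem A) and the index set of the union is finite, this superlevel set is closed, which is exactly upper semicontinuity of $\Gabull$.

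\textbf{The component statement.} First record that $\Gabull\ge 1$ everywhere: for any $x$, the radical layering $\SS(M_x)$ governs the radical filtration of $M_x$ and is realizable (as $x$ itself lies in $\laySS$ for $\SS=\SS(M_x)$). Now let $\C$ be an irreducible component of $\laySS$ with $1\in\Gabull(\C)$; fix $x_0\in\C$ with $\Gamma(M_{x_0})=1$, and note that $\SS$ is realizable (since $\C\ne\varnothing$) and that $x_0\in\C\subseteq\overline{\laySS}\subseteq\Filt\SS$ by Theorem A. The closure $\overline{\C}$ in $\modlad$ is irreducible; suppose $\overline{\C}\subseteq\Z$ with $\Z$ irreducible and closed in $\modlad$. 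Decomposing $\Z$ along the finitely many strata $\laySS'$ ($\SS'$ ranging over $\bd$-dimensional semisimple sequences) and using irreducibility of $\Z$, one stratum is dense in $\Z$, so $\Z\subseteq\overline{\Rep\SS'}$ for some $\SS'$ which is realizable because $\Z$ meets $\Rep\SS'$. Then $\Z\subseteq\Filt\SS'$ by Theorem A, so $x_0\in\Filt\SS'$; thus $\SS$ and $\SS'$ are both realizable semisimple sequences governing filtrations of $M_{x_0}$, and $\Gamma(M_{x_0})=1$ forces $\SS=\SS'$. Hence $\Z\subseteq\overline{\laySS}$. Since $\laySS$ is locally closed, it is open and dense in $\overline{\laySS}$, so the irreducible components of $\overline{\laySS}$ are the closures of those of $\laySS$; in particular $\overline{\C}$ is a component, hence maximal among irreducible closed subsets of $\overline{\laySS}$, and $\overline{\C}\subseteq\Z\subseteq\overline{\laySS}$ yields $\Z=\overline{\C}$. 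Therefore $\overline{\C}$ is an irreducible component of $\modlad$.

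\textbf{Where the work lies.} The substance of the whole argument is carried by Theorem A (closedness of $\Filt\SS$, and the inclusion $\overline{\laySS}\subseteq\Filt\SS$); granted that, Corollary B is a formal consequence of the finite stratification $\modlad=\bigcup_\SS\laySS$. The only delicate bookkeeping items are the remark $\Gabull\ge 1$ and the routine identification of the irreducible components of the locally closed stratum $\laySS$ with those of its closure — neither presents a genuine obstacle.
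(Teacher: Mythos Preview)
Your argument is correct and follows essentially the same route as the paper's proof of Corollary~3.11: both parts rest on Theorem~A (closedness of each $\Filt\SS$) together with finiteness of the set of $\bd$-dimensional semisimple sequences, and the superlevel-set description you give for $\Gabull^{-1}([n,\infty))$ is identical to the paper's. For the component statement the paper argues by contrapositive (if $\overline{\C}$ is strictly contained in a component $\C'$, then $\C'$ is a component of some $\overline{\Rep\SS'}$ with $\SS'<\SS$, forcing $\Gamma>1$ on $\C$), whereas you argue directly and supply the stratification step inline rather than invoking the standing fact that components of $\modlad$ are among components of the $\overline{\laySS'}$; this is a cosmetic reorganization, not a different method.
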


In the second part of the paper, we derive consequences for truncated path algebras.  As is suggested by Theorem A, the component problem simplifies considerably in this situation.  Notably, the subvarieties $\overline{\laySS}$ are all irreducible, and generic minimal projective presentations of the modules in $\laySS$ are immediate from quiver and Loewy length (see \cite[Section 5]{BHT} and Section 5.A  below). In some prominent special cases, particularly manageable solutions to the problem of sifting out the inclusion-maximal ones among the closures $\overline{\laySS}$ are already available (see \cite{irredcompI, irredcompII}):  For instance, if $\la$ is either local or based on an acyclic quiver $Q$, the semisimple sequences singled out by the minimal values of the following upper semicontinuous map furnish a complete, nonrepetitive parametrization of the components $\overline{\laySS}$ of $\modlad$: 
\begin{equation}  \label{Thetadef}
\Theta = (\SS_\bullet, \SS^*_\bullet) : \modlad \rightarrow \Seq(\bd) \times \Seq(\bd), \ \ \ x \mapsto \bigl(\SS(M_x), \SS^*(M_x) \bigr);
\end{equation}
here the codomain of $\Theta$ is partially ordered by the componentwise dominance order on the set $\Seq(\bd)$ of all $\bd$-dimensional semisimple sequences (see Section \ref{sec2}), and $\SS^*(M_x)$ stands for the socle layering of the module $M_x$ (the dual of the radical layering).  The unique minimal sequence $\SS^*(M_x)$ attained on $\laySS$, that is, the generic socle layering of the modules in $\overline{\laySS}$, is supplied by a closed formula based on $\SS$, $Q$ and $L$ \cite[Theorem 3.8]{irredcompII}, which makes the $\Theta$-test very user-friendly.
But for general truncated $\la$, the map $\Theta$ fails to detect all components, even when supplemented by further standard semicontinuous module invariants, such as path ranks or assortments of annihilator dimensions.  The map $\Gabull$, on the other hand, compensates for the blind spots of $\Theta$:  

\begin{theoremC*} {\rm (cf.~\ref{thm4.5}.)} If $\la$ is any truncated path algebra, the irreducible components of $\modlad$ are precisely those closures $\overline{\laySS}$ on which $\Gabull$ attains the value $1$. 

 In other words, $\overline{\laySS}$ is maximal among the irreducible subsets of $\modlad$ if and only if there exists a module $N$ in $\laySS$ such that $N \supseteq JN \supseteq \cdots \supseteq J^{L+1} N$ is the only filtration of $N$ which is governed by a realizable semisimple sequence.
\end{theoremC*}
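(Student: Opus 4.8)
The plan is to establish the theorem as a pair of inclusions between the set of irreducible components of $\modlad$ and the set of closures $\overline{\laySS}$ on which $\Gabull$ assumes the value $1$, drawing on three facts now in hand: Theorem A in both of its halves (the set $\Filt\SS$ is always closed, and $\overline{\laySS}=\Filt\SS$ when $\la$ is truncated), Corollary B, and the already-established irreducibility of the strata $\laySS$ — equivalently of $\overline{\laySS}=\Filt\SS$ — in the truncated case. I would open with the elementary remark that $\Gamma(M)\ge 1$ for \emph{every} module $M$, since $\SS(M)$ is itself a realizable semisimple sequence and the radical filtration $M\supseteq JM\supseteq\cdots$ of $M$ is governed by it; hence $\{x:\Gamma(M_x)=1\}=\{x:\Gabull(x)\le 1\}$ is open in $\modlad$. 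I would also record once and for all that each nonempty $\laySS$ is open and dense in the irreducible set $\overline{\laySS}$ (it is locally closed in $\modlad$, hence open and dense in its closure).

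For the direction ``$1\in\Gabull(\overline{\laySS})\Rightarrow$ component'': the open locus $\{\Gabull=1\}$ meets $\overline{\laySS}$, hence is dense there by irreducibility, hence meets the dense open subset $\laySS$; so there is $x\in\laySS$ with $\Gamma(M_x)=1$, i.e.\ $1\in\Gabull(\laySS)$. As $\laySS$ is irreducible, it is its own unique irreducible component, and Corollary B applies verbatim with $\C=\laySS$ to give that $\overline{\laySS}$ is an irreducible component of $\modlad$. Combined with the fact, noted in the introduction, that the irreducible components of $\modlad$ lie among the finitely many (irreducible) sets $\overline{\laySS}$, this already shows that the closures on which $\Gabull$ attains $1$ are precisely the components, once we verify that every component arises this way.

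For that remaining direction I would argue by contradiction. Let $\overline{\laySS}$ be an irreducible component of $\modlad$ and suppose $\Gabull$ never equals $1$ on it, so $\Gamma(M_x)\ge 2$ for all $x\in\laySS$. The one genuinely new ingredient is an order-theoretic lemma: if a module $M$ has a filtration $M=M_0\supseteq\cdots\supseteq M_{L+1}=0$ governed by a semisimple sequence $\T$, then $J^lM\subseteq M_l$ for all $l$ (each $M_l/M_{l+1}$ is semisimple, so $JM_l\subseteq M_{l+1}$), whence $\udim J^lM\le\udim M_l=\sum_{i\ge l}\udim\T_i$, which is exactly the statement $\SS(M)\ge\T$ in the dominance order; thus $\Filt\T\subseteq\{x:\SS(M_x)\ge\T\}$. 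Now for $x\in\laySS$ we have $\SS(M_x)=\SS$, and the hypothesis $\Gamma(M_x)\ge 2$ means $M_x\in\Filt\T$ for some realizable $\T\neq\SS$ (the sequence $\SS$ already accounts for the radical filtration). Since there are only finitely many semisimple sequences and each $\Filt\T$ is closed, irreducibility of $\laySS$ forces $\laySS\subseteq\Filt\T$ for a single realizable $\T\neq\SS$, hence $\overline{\laySS}\subseteq\Filt\T$. By Theorem A (truncated case) $\Filt\T=\overline{\Rep\T}$, which is irreducible, so maximality of the component $\overline{\laySS}$ yields $\overline{\laySS}=\overline{\Rep\T}$. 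But then the dense open subsets $\laySS$ and $\Rep\T$ of this common irreducible variety intersect, and any point of the intersection has radical layering equal both to $\SS$ and to $\T$ — contradicting $\SS\neq\T$. Hence $\Gamma$ does attain the value $1$ on $\laySS\subseteq\overline{\laySS}$, completing the proof.

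Finally, the ``in other words'' reformulation is immediate: for $N\in\laySS$ the inclusions $J^lN\subseteq N_l$ from the lemma are equalities by a dimension count (because $\SS(N)=\SS$), so $N\supseteq JN\supseteq\cdots\supseteq J^{L+1}N=0$ is the \emph{unique} $\SS$-governed filtration of $N$, and therefore $\Gamma(N)=1$ says precisely that it is the only filtration of $N$ governed by a realizable semisimple sequence. I expect the main delicacy to be bookkeeping around irreducibility — one must invoke both the truncation hypothesis and the identity $\Filt\T=\overline{\Rep\T}$ from Theorem A to know $\Filt\T$ is irreducible — together with keeping the normalization $\Gamma\ge 1$ in view so that $\{\Gabull=1\}$ is genuinely open; the order lemma itself, while new here, is routine.
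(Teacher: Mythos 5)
Your proof is correct and follows essentially the same route as the paper's proof of Theorem~\ref{thm4.5}: in both, the forward direction rests on Corollary~\ref{cor3.11} (together with the observation that $\{\Gabull = 1\}$ is open and $\laySS$ is dense in $\overline{\laySS}$), and the converse covers $\laySS$ by the finitely many closed sets $\Filt\T$ with $\T$ realizable and $\T \ne \SS$, then uses irreducibility of $\laySS$ and Theorem~\ref{thm4.3} to show $\overline{\laySS}$ is absorbed into some $\overline{\Rep\T}$ and therefore not maximal. The auxiliary order-theoretic lemma you isolate is precisely Remark~\ref{rem3.7}(2), which the paper states without proof; your re-derivation and its use in the ``in other words'' reformulation are both correct.
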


In deciding which semisimple sequences $\SS$ are the generic radical layerings of the irreducible components of $\modlad$, Theorem C thus permits exclusive reliance on $\Gabull$.  However, in practice, combining $\Gabull$ with the test map $\Theta$ is considerably more efficient.
\medskip

In the pursuit of a generic approach to the structure of $\la$-modules, the hereditary case, pioneered in \cite{KacI, KacII} and \cite{Scho}, serves as a model.
We further point to a selection of existing contributions to the component problem over non-hereditary algebras: General tools were developed in \cite{CBS} and \cite{BHT}.  Solutions to the problem over specific classes of tame algebras were given in \cite{BaSch, CW, DoFl, GeiSchI, GeiSchII, Mor, RiRuSm, Schro} for instance; solutions for certain classes of wild non-hereditary algebras can be found in \cite{BCH, irredcompI, irredcompII}.  As is to be expected, meaningful classifications of the irreducible components of $\modlad$ in the quoted instances are throughout obtained via partial lists of generic properties of the modules in the components.  For a more detailed discussion of prior work on the topic we refer to the introduction of \cite{irredcompI}.

We add a few comments on the foundational nature of truncated path algebras with respect to the component problem.  Clearly, given an arbitrary basic $K$-algebra $\la = KQ/I$, there is a unique truncated path algebra $\latrunc$ having the same quiver and Loewy length as $\la$.  In the general situation, the varieties $\overline{\laySS}$ typically break up into multiple components.  Given that all of them are contained in irreducible components of $\Rep_\bd(\latrunc)$, it is advantageous to first determine the latter, say 
$$\overline{\Rep_{\latrunc}\SS^{(1)}} = \Filt_{\latrunc}(\SS^{(1)}),\ \ \dots\ ,\  \overline{\Rep_{\latrunc}\SS^{(m)}} = \Filt_{\latrunc}(\SS^{(m)}),$$ 
before aiming at the irreducible components of $\modlad$.  Indeed, this confines the need for size comparisons among the closures of components of the varieties $\Rep_\la \SS$  to the subvarieties $\Filt_{\latrunc} \SS^{(j)}\, \cap\, \modlad$; see  Section 6.B.  

\subsection*{Overview} In Section \ref{sec2}, we provide background for the proofs of the main results and introduce a recurring example.  Section \ref{sec3} addresses the general case, where $\la$ is basic but otherwise unrestricted.  In Sections \ref{sec4} and \ref{sec5}, we apply the findings to truncated path algebras.  Section \ref{sec4} contains the announced classification of the irreducible components of $\modlad$, while in Section \ref{sec5}, we discuss generic modules and apply the results of Section \ref{sec4} towards interconnections among the components.  Section \ref{sec6}, finally, illustrates the theory and addresses the interplay $\modlad \longleftrightarrow \Rep_\bd(\latrunc)$.

\section{Conventions and prerequisites}
\label{sec2}

To repeat: Throughout, we assume $\la = KQ/I$ to be a basic finite dimensional algebra over $K = \overline{K}$ with Jacobson radical $J$ and Loewy length $L+1$.  The composition $pq$ of paths stands for ``$p$ after $q$" in case $\strt(p) = \term(q)$, while $pq=0$ in $KQ$ otherwise.  By $\latrunc$ we denote the truncated path algebra associated to $\la$, namely,  
 $$\latrunc = KQ/ \langle \text{the paths of length}\ L+1 \rangle;$$
we make no notational distinction between the $\la$- and $\latrunc$-structures of the objects in $\lamod$.   The vertices $e_1,\dots,e_n$ of $Q$ will be identified with the paths of length zero in $KQ$, as well as with the corresponding primitive idempotents in $\la$.  An element $x$ of a $\la$-module $M$ is said to be {\it normed by $e_i$\/} if $x= e_i x$, and a normed element in $M \setminus JM$ is called a {\it top element\/} of $M$. A {\it full sequence of top elements\/} of $M$ is a generating set of $M$ consisting of top elements which are $K$-linearly independent modulo $JM$. The simple module $\la e_i/ Je_i$ corresponding to the vertex  $e_i$ will be denoted by $S_i$, and isomorphic semisimple modules will be identified.

The {\it dominance order\/} on the set $\Seq(\bd)$ of all semisimple sequences with dimension vector $\bd$ is defined as follows:
$$(\SS_0, \dots, \SS_L) \le (\SS'_0, \dots, \SS'_L) \ \ \iff \ \ \bigoplus_{0\le j\le l} \SS_j \subseteq \bigoplus_{0\le j\le l} \SS'_j  \ \ \text{for} \ \ 0 \le l\le L.$$
Recall that the radical and socle layerings of a $\la$-module $M$ are denoted by $\SS(M)$ and $\SS^*(M)$. For basic properties of these semisimple sequences, we refer to \cite[Section 2.B]{irredcompI}. 

We fix our notation for the parametrizing varieties of the $\bd$-dimensional $\la$-modules. The affine variety $\modlad$ is 
$$\bigl\{ (x_\alpha)_{\alpha\in Q_1} \ \in \ \prod_{\alpha\in Q_1} \Hom_K \bigl( K^{d_{\strt(\alpha)}}, K^{d_{\term(\alpha)}} \bigr) \bigm| \text{the} \; x_\alpha \; \text{satisfy all relations in} \; I \bigr\},$$
where $Q_1$ is the set of arrows of $Q$. The orbits of the obvious conjugation action on $\modlad$ by the group $\GL(\bd) := \prod_{1\le i\le n} \GL_{d_i}(K)$ are in natural bijection with the isomorphism classes of the $\bd$-dimensional $\la$-modules. Given $\SS \in \Seq(\bd)$, we denote by $\laySS$ the locally closed subvariety of $\modlad$ which consists of the points $x$ for which the corresponding module $M_x$ has radical layering $\SS$.  Clearly, the varieties $\laySS$, where $\SS$ traces the semisimple sequences with $\udim \SS = \bd$, partition $\modlad$.  However, in general, this (finite) partition falls short of being a stratification of $\modlad$ in the strict sense, in that closures of strata need not be unions of strata.  

To introduce the projective parametrizing variety $\grassbd$, we fix a projective $\la$-module $\bP$ whose top $\bP/J\bP$ has dimension vector $\bd$, and set $d=|\bd|$. The variety $\grassbd$ is the closed subvariety of the vector space Grassmannian $\Gr\bigl( (\dim\bP - d), \bP \bigr)$ consisting of those points $C \in \Gr\bigl( (\dim\bP - d), \bP \bigr)$ which are $\la$-submodules of $\bP$ with the property that $\udim(\bP/C) = \bd$. This time, the group action whose orbits determine the isomorphism classes of the quotients $\bP/C$ in $\lamod$ is the canonical action of $\Aut_\la(\bP)$ on $\grassbd$. The role played by $\laySS$ in the affine setting is taken over by $\biggrassSS$, the locally closed subvariety consisting of those $C \in \grassbd$ for which $\SS(\bP/C) = \SS$. 

The following connection between the affine and projective parametrizing varieties was proved in \cite[Proposition C]{BoHZ}; it was inspired by Gabriel's \cite{Gab}, as is explained in some detail in Remark 3 of \cite[Section 2]{BoHZ}.  We restate the result for convenient reference. 

\begin{proposition} \label{prop2.1}
 Consider the natural isomorphism from the lattice of $\GL(\bd)$-stable subsets of $\modlad$ on one hand to the lattice of $\Aut_\la(\bP)$-stable subsets of $\grassbd$ on the other, which pairs orbits encoding  isomorphic modules. This correspondence  preserves and reflects openness, closures, irreducibility, and smoothness.
\end{proposition}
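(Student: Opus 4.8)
The plan is to realize both $\modlad$ and $\grassbd$ as geometric quotients of one and the same auxiliary variety, in the spirit of Gabriel \cite{Gab} and of \cite{BoHZ}. Write $\bP = \bigoplus_{1\le i\le n}(\la e_i)^{d_i}$, so that $\Hom_\la(\bP, N)\cong\bigoplus_i(e_iN)^{d_i}$ naturally in $N$; since $e_iM_x = K^{d_i}$ for every $x\in\modlad$, the space $\Hom_\la(\bP, M_x)$ is canonically the fixed vector space $K^{\sum_i d_i^2}$, independently of $x$ --- that is, $\Hom_\la(\bP, -)$ is a \emph{trivial} vector bundle over $\modlad$. Inside $\modlad\times K^{\sum_i d_i^2}$ let $\E$ be the set of pairs $(x,f)$ for which $f\colon\bP\to M_x$ is surjective. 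Surjectivity of $f$ amounts to the open condition $\rank f = |\bd|$, so $\E$ is open in $\modlad\times K^{\sum_i d_i^2}$; and since the top $\bigoplus_i S_i^{d_i}$ of $\bP$ surjects onto the top of every $\bd$-dimensional module, the projection $\pi\colon\E\to\modlad$, $(x,f)\mapsto x$, is surjective. Thus $\pi$ is an open, smooth surjection whose fibres are nonempty open subsets of affine space, in particular irreducible and smooth.

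Next I would bring in the second projection $\rho\colon\E\to\grassbd$, $(x,f)\mapsto\Ker f$, which is well defined because $\bP/\Ker f\cong M_x$ has dimension vector $\bd$. The group $\GL(\bd)$ acts freely on $\E$ by $h\cdot(x,f) = (h\cdot x,\,h\circ f)$, fixing $\Ker f$, and $\Aut_\la(\bP)$ acts freely by $g\cdot(x,f) = (x,\,f\circ g^{-1})$, fixing $x$ and sending $\Ker f$ to $g(\Ker f)$; the two actions commute and are compatible with the given actions on $\modlad$ and $\grassbd$ via $\pi$ and $\rho$. The map $\rho$ is $\GL(\bd)$-invariant and surjective --- any identification $\bP/C\cong K^{|\bd|}$ respecting the vertex grading produces a point $x$ and a surjection $f$ with $\Ker f = C$ --- with fibre over $C$ the $\GL(\bd)$-torsor of such identifications; because $\GL(\bd) = \prod_i\GL_{d_i}$ is special (products of general linear groups are), $\rho$ is a Zariski-locally trivial principal $\GL(\bd)$-bundle, hence also an open, smooth surjection with irreducible fibres, and $\grassbd = \E/\GL(\bd)$.

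To get the remaining quotient I need the fibres of $\pi$ to be exactly the $\Aut_\la(\bP)$-orbits, that is, the module-theoretic statement that \emph{any two surjections $f,f'\colon\bP\twoheadrightarrow M$ of $\bd$-dimensional modules agree up to an automorphism of $\bP$}; this is the crux. I would prove it by reducing to projective covers: choose a complement $V$ of $\Ker\bar f$ in $\bP/J\bP$ (where $\bar f\colon\bP/J\bP\to M/JM$ is induced by $f$) and lift $\bP/J\bP = V\oplus W$ to a decomposition $\bP = A\oplus B$; then $f|_A\colon A\twoheadrightarrow M$ is a projective cover and $f(B)\subseteq JM$. After composing $f'$ with a suitable automorphism of $\bP$, one may assume the same splitting $\bP = A\oplus B$ is adapted to $f'$. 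Lifting $f'|_A$ through the surjection $f|_A$ gives $\alpha\in\End_\la(A)$ with $f'|_A = (f|_A)\circ\alpha$, and $\alpha$ is an automorphism since it induces one on $A/JA$ (Nakayama); lifting $f'|_B - f|_B\colon B\to JM$ through the surjection $f|_A\colon A\twoheadrightarrow M$ gives $\beta\colon B\to A$. Then $g = \left(\begin{smallmatrix}\alpha & \beta\\ 0 & \id_B\end{smallmatrix}\right)$ is an automorphism of $A\oplus B = \bP$ with $f\circ g = f'$. Consequently $\pi$ exhibits $\modlad = \E/\Aut_\la(\bP)$, and $\pi^{-1}(\GL(\bd)\cdot x)$ is a single $\GL(\bd)\times\Aut_\la(\bP)$-orbit, the one whose $\rho$-image is $\Aut_\la(\bP)\cdot C$ for any $C$ with $\bP/C\cong M_x$.

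Finally I would assemble the statement. Each of $\pi^{-1}$ and $\rho^{-1}$ is a lattice isomorphism from the $\GL(\bd)$-stable, resp. the $\Aut_\la(\bP)$-stable, subsets onto the lattice of $\GL(\bd)\times\Aut_\la(\bP)$-stable subsets of $\E$ (every subset in the latter is saturated for both $\pi$ and $\rho$), so $U\mapsto\rho(\pi^{-1}(U))$ is the asserted lattice isomorphism, and by the previous paragraph it pairs orbits of isomorphic modules. Preservation and reflection of openness and closedness --- hence of closures, via $\pi^{-1}(\overline U) = \overline{\pi^{-1}(U)}$ and its analogue for $\rho$ --- of irreducibility, and of smoothness, all follow from the fact that $\pi$ and $\rho$ are open, smooth surjections with irreducible fibres: for a $\GL(\bd)$-stable $U$ the subset $\pi^{-1}(U)$ has any one of these properties iff $U$ does, and since $\pi^{-1}(U) = \rho^{-1}(\rho(\pi^{-1}(U)))$, so does $\rho(\pi^{-1}(U))$. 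The main obstacle is the surjection lemma of the third paragraph; the remaining work is the routine bookkeeping of principal bundles, where the only delicate point --- Zariski-local triviality of $\rho$ --- is handled by the fact that $\GL(\bd)$ is special.
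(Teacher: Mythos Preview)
The paper does not actually prove Proposition~\ref{prop2.1}; it quotes the result from \cite[Proposition~C]{BoHZ} and only restates it for reference. Your argument is a correct and complete reconstruction of that proof, following precisely the Gabriel--Bongartz--Huisgen-Zimmermann strategy the paper alludes to: build the incidence variety $\E$ of pairs $(x,f)$ with $f\colon\bP\twoheadrightarrow M_x$, exhibit both $\modlad$ and $\grassbd$ as images of $\E$ under smooth open surjections $\pi$, $\rho$ with irreducible fibres, identify the $\pi$-fibres with $\Aut_\la(\bP)$-orbits via the surjection lemma, and transfer the four geometric properties through $\pi$ and $\rho$.

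One small slip, not affecting the argument: the $\Aut_\la(\bP)$-action on $\E$ is \emph{not} free in general. Whenever $\Ker f \ne 0$, any automorphism of the form $\id_\bP + h$ with $h \in \End_\la(\bP)$ nilpotent and $\Img h \subseteq \Ker f$ stabilizes $(x,f)$, and such nontrivial $h$ typically exist (e.g., for $\la = K[t]/(t^2)$, $\bP = \la$, $f\colon\la\twoheadrightarrow S$, every unit $1+at$ fixes $(x,f)$). You never use freeness, however: all that matters is that each $\pi$-fibre is a single $\Aut_\la(\bP)$-orbit, which is exactly what your third paragraph establishes. By contrast, the $\GL(\bd)$-action genuinely is free, and your appeal to specialness of $\GL(\bd)$ for Zariski-local triviality of $\rho$ is the right ingredient there.
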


In describing generic projective resolutions of the modules in an irreducible component of $\modlad$, a key invariant of a $\bd$-dimensional $\la$-module $M$ is its set of skeleta. These skeleta live in a projective cover of $M$ in $\latruncmod$. In the following definitions, we fix a semisimple sequence $\SS$ with $\udim \SS = \bd$.

\begin{definitions}  \label{def2.2} \textbf{Coordinatized projective modules and skeleta.} 
\smallskip

\textbf{(1)} Let $\Ptrunc$ be a projective cover of $\SS_0$ in $\latruncmod$. This cover is referred to as a {\it coordinatized\/} projective module when it comes equipped with a fixed full sequence of top elements $\bz_1,\dots,\bz_t$, where $t= \dim \SS_0$.  
In particular, we obtain a decomposition $\Ptrunc = \bigoplus_{1\le r\le t} \latrunc\, \bz_r$. 
A {\it path of length $l$\/} in the coordinatized projective module $\Ptrunc$ is any nonzero element $\bp = p\,\bz_r$ where $p$ is a path of length $l$ in $Q$;  thus each $\bz_r$ is now viewed as a path of length zero. Note that we have a well-defined concept of path length in $\latrunc$, and hence also in $\Ptrunc$.  Clearly, each path $\bp = p\, \bz_r \in \Ptrunc$  is normed by a primitive idempotent, namely by $\term(p)$, and the primitive idempotent norming $\bz_r$ is $\strt(p)$.
\smallskip

\textbf{(2)} An (abstract) {\it skeleton with layering $\SS$\/} is a set $\S$ consisting of paths in $\Ptrunc$ which satisfies the following two conditions:  
\begin{enumerate}
\item It is closed under initial subpaths, i.e., whenever $p\, \bz_r \in \S$, and $q$ is an initial subpath of $p$ (meaning $p = q' q$ for some path $q'$), the path $q\, \bz_r$ again belongs to $\S$. 
\item For $0 \le l \le L$, the number of those paths of length $l$ in $\S$ which end in a given vertex $e_i$ coincides with the multiplicity of $S_i$ in the semisimple module $\SS_l$.
\end{enumerate}
Note that any skeleton $\S$ with layering $\SS$ includes the paths $\bz_1, \dots, \bz_t$ of length zero.  
\smallskip

\textbf{(3)} Let $M \in \lamod$.  An abstract skeleton $\S$ is a {\it skeleton of\/} $M$ in case $M$ has a full sequence $z_1, \dots, z_t$ of top elements, each $z_r$ normed by the same vertex as $\bz_r$, such that  
\begin{enumerate}
\item $\{ p\, z_r \mid p\, \bz_r \in \S\}$ is a $K$-basis for $M$, and 
\item the layering of $\S$ coincides with the radical layering $\SS(M)$ of $M$.  
\end{enumerate}
In this situation, we also say that $\S$ is a skeleton of $M$ {\it relative to\/} $z_1,\dots,z_t$.
\end{definitions}

Clearly, the set of skeleta of any finite dimensional $\la$-module $M$ is non-empty, and the set of all skeleta of modules with fixed dimension vector $\bd$ is finite.  The relevance of skeleta towards a generic understanding of the modules in the irreducible components of $\modlad$ is underlined by the following fact:  

\begin{observation} Let $\P$ be the power set of the set of all skeleta with dimension vector $\bd$. Then the map
$$\modlad \longrightarrow \P, \qquad x \longmapsto \{ \text{skeleta of}\ M_x \}$$
is generically constant on each irreducible component of $\modlad$.

To see this, let $\C \subseteq \modlad$ be an irreducible component, and $\SS$ the generic radical layering of its modules. Then $\C \cap \laySS$ is open in $\C$, and for any skeleton $\S$ with layering $\SS$, the set
$$\Rep(\S) := \{ x \in \modlad \mid \S \ \text{is a sketon of} \ M_x \}$$
is an open subvariety of $\laySS$; see \cite[Lemma 3.8]{classifying}. Hence, a skeleton $\S$ with layering $\SS$ arises as a skeleton of the modules in a dense open subset of $\C$ precisely when $\C \cap \Rep(\S)$ is nonempty. Given that there are only finitely many eligible skeleta, this proves the claim.
\end{observation}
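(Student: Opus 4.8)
The plan is to fix an irreducible component $\C$ of $\modlad$, identify its generic radical layering $\SS$, and then produce a dense open subset of $\C$ on which $x\mapsto\{\text{skeleta of }M_x\}$ takes a single, explicitly describable value. The first step is purely formal. The finitely many locally closed subvarieties $\laySS$, with $\udim\SS=\bd$, partition $\modlad$ and hence partition $\C$; taking closures and using that $\C$ is irreducible, exactly one of the finitely many sets $\overline{\C\cap\laySS}$ equals $\C$. For that sequence $\SS$, write $\laySS=U\cap Z$ with $U$ open in $\modlad$ and $Z=\overline{\laySS}$ closed; since $\C\cap\laySS$ is dense in $\C$ and $\C\cap Z$ is a closed subset of $\C$ containing it, we get $\C\subseteq Z$, whence $\C\cap\laySS=\C\cap U$ is open — thus open and dense — in $\C$.

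The second step brings in the skeleton loci. For a skeleton $\S$ with layering $\SS$, set $\Rep(\S)=\{x\in\modlad\mid \S\text{ is a skeleton of }M_x\}$. Since any skeleton of $M_x$ has layering $\SS(M_x)$ by definition, $\Rep(\S)\subseteq\laySS$. The one external input I would invoke is that each $\Rep(\S)$ is in fact open in $\laySS$ — this is \cite[Lemma 3.8]{classifying}, a rank-type statement about the defining matrices of the representations. Combining this with the elementary facts that every finite-dimensional module has at least one skeleton and that every skeleton of $M_x$ has layering $\SS(M_x)$, one obtains a finite open cover $\laySS=\bigcup_\S\Rep(\S)$, the union running over the finitely many skeleta $\S$ with layering $\SS$.

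It remains to assemble these ingredients. Intersecting the cover with $\C$ gives $\C\cap\laySS=\bigcup_\S\bigl(\C\cap\Rep(\S)\bigr)$, a finite union of open subsets of the irreducible variety $\C$; let $\Sigma$ be the (necessarily nonempty) set of skeleta $\S$ with $\C\cap\Rep(\S)\ne\varnothing$. By irreducibility of $\C$, each $\C\cap\Rep(\S)$ with $\S\in\Sigma$ is dense in $\C$, while $\C\cap\Rep(\S)=\varnothing$ for $\S\notin\Sigma$, so $V:=\bigcap_{\S\in\Sigma}\bigl(\C\cap\Rep(\S)\bigr)$ is a finite intersection of dense open subsets of $\C$, hence itself dense and open in $\C$. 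For $x\in V$: every $\S\in\Sigma$ is a skeleton of $M_x$; no $\S\notin\Sigma$ is, since $x\notin\Rep(\S)$; and every skeleton of $M_x$ has layering $\SS$ (as $x\in\laySS$) and therefore appears among the $\S$ under consideration. Hence the set of skeleta of $M_x$ equals $\Sigma$ for all $x\in V$, which is the desired generic constancy. The only step I expect to require real work, as opposed to formal manipulation, is the openness of $\Rep(\S)$ in $\laySS$; granting that (and the trivial covering observation), the argument is the same irreducibility-and-finiteness routine that already handled $\C\cap\laySS$.
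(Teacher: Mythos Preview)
Your argument is correct and follows essentially the same route as the paper's: identify the generic radical layering $\SS$ so that $\C\cap\laySS$ is open in $\C$, invoke \cite[Lemma 3.8]{classifying} for the openness of each $\Rep(\S)$ in $\laySS$, and use irreducibility together with the finiteness of the set of skeleta to intersect down to a dense open set on which the skeleton set is constant. You have merely spelled out in more detail the steps the paper compresses into two sentences.
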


Next, we recall more discerning graphical invariants associated to a finite dimensional $\la$-module, namely its hypergraphs; see \cite[Definition 3.9]{BHT}. 

\begin{definitions}  \label{def2.3} \textbf{$\S$-critical paths and hypergraphs.} 
Again, we let $\Ptrunc$ be a coordinatized projective $\latrunc$-module with top $\SS_0$ and assume $\S \subseteq \Ptrunc$ to be an abstract skeleton with layering $\SS$.  Recall that the distinguished top elements $\bz_r$ of $\Ptrunc$ coincide with the paths of length zero in $\S$.
\smallskip

\textbf{(1)} A {\it $\sigma$-critical path\/} is a path $\bq \in \Ptrunc \setminus \sigma$ such that every \underbar{proper} initial subpath of $\bq$ belongs to $\S$. Thus, $\bq = \alpha \bq'$ where $\bq' \in \sigma$ and $\alpha$ is an arrow; in particular, $\len(\bq) > 0$.   Given a $\sigma$-critical path  $\bq$, we define a subset $\S_\bq \subseteq \S$ as follows: 
$$\S_{\bq} := \{ \text{paths\ }\bp \in \sigma \mid \len(\bp) \ge \len(\bq) \,\ \text{and} \ \term(\bp) = \term(\bq) \}.$$
The final condition in the definition of $\sigma_{\bq}$ means that all paths in $\sigma_{\bq}$ are normed (on the left) by the same vertex as $\bq$.
\smallskip

\textbf{(2)} Suppose $M \in \lamod$ has skeleton $\S$ relative to a full sequence $z_1, \dots, z_t$ of top elements.  The $\la$-structure of $M$ is then determined by the family of expansion coefficients corresponding to the $\S$-critical paths $\bq = q\, \bz_r \in \Ptrunc$, namely
\begin{equation}  \label{expandqzr}
q\,z_r = \sum_{\bp = p \bz_s  \in \S_\bq} c_{\bq, \bp}\, p\, z_s
\end{equation}
for unique scalars $c_{\bp,\bq} \in K$.
\smallskip

\textbf{(3)} We refer to any pair 
 $$\G = (\sigma, \bigl(\tau_\bq)_{\bq \; \sigma\text{-critical}}\bigr)  \ \ \ \text{with} \ \ \tau_{\bq} \subseteq \sigma_{\bq} \ \ \text{for all} \ \S\text{-critical paths}\ \bq$$ 
 as an (undirected) {\it hypergraph in\/} $\Ptrunc$. The set $\tau_\bq$ is called the {\it support set\/} of $\bq$. Empty support sets are allowed.  
 
 In informal terms: The vertices of these hypergraphs are the elements of $\sigma$, and a typical (hyper)edge, labeled by an arrow $\gamma \in Q_1$, connects a vertex $\bp \in \S$ to the vertex $\gamma \bp$ in case $\gamma \bp \in \S$ and to the support set $\tau_{\gamma\bp}$ of vertices if $\gamma\bp$ is $\S$-critical. 
\smallskip

\textbf{(4)} A hypergraph $\G$ as above is called a {\it hypergraph of\/} a $\la$-module $M$ 
  (relative to a  full sequence $z_1, \dots, z_t$ of top elements of $M$) if $\S$ is a skeleton of $M$ and, in the expansion \eqref{expandqzr} above, $c_{\bq,\bp} \ne 0$  precisely when $\bp \in \tau_\bq$.
\end{definitions}

While hypergraphs pin down \underbar{families} of modules, as opposed to individual isomorphism classes, they provide a useful tool for communicating, in a visually suggestive format, the generic structure of the modules in the components.  For our diagrammatic representations of hypergraphs, we refer to \cite{BHT}, \cite{DHW}, and to the example below.  This example will serve as a staple in the sequel.

\begin{example} \label{ex2.4}  Let $\la = KQ / \langle \text{the paths of length}\ 4 \rangle = \latrunc$, where $Q$ is the quiver
$$\xymatrixrowsep{1.5pc}\xymatrixcolsep{6pc}
\xymatrix{
1 \ar@/^/[r]^{\alpha_1}  \ar@/^5ex/[r]^(0.6){\alpha_2} \ar@/^10ex/[r]^(0.7){\alpha_r}_{\vdots}  &2
\ar@/^/[l]^{\beta_1}  \ar@/^5ex/[l]^(0.6){\beta_2} \ar@/^10ex/[l]^(0.7){\beta_s}_{\vdots}
}$$

\textbf{(a)}  First suppose that $r = 2$ and $s= 1$.  Choose $\SS := (S_1, S_2, S_1, S_2)$, and let $\Ptrunc = \latrunc \bz$ be the corresponding $\latrunc$-projective cover of $\SS_0 = S_1$, coordinatized by a fixed top element $\bz$. 
Generically, the modules in $\Rep \SS$ then have a hypergraph of the form
$$\xymatrixrowsep{1.75pc}\xymatrixcolsep{1.5pc}
\xymatrix{
1 \edge[d]_(0.4){\alpha_1}  \\
2 \edge[d]_(0.6){\beta_1} & \save+<0ex,-0.1ex> \dashedge@/_/[ul]_(0.4){\alpha_2} \restore  \\
1 \edge[d]_(0.4){\alpha_1} \dashedge@/^2ex/[d]^(0.4){\alpha_2}   &&  \\
2 \save[0,0]+(0,3);[0,0]+(0,3) **\crv{~*=<2.5pt>{.} [0,1]+(2,3) &[-1,1]+(2,-4) &[-2,1]+(2,-4) &[-2,0]+(0,-4) &[-2,0]+(-3,-4) &[-2,0]+(-3,0) &[-2,0]+(-3,4) &[-2,0]+(0,4) &[-2,1]+(0,4) &[-2,1]+(5,4) &[-2,1]+(5,0) &[-2,1]+(5,-4) &[-1,1]+(5,0) &[0,1]+(5,0) &[0,1]+(5,-4) &[0,1]+(0,-4) &[0,0]+(0,-4) &[0,0]+(-3,-4) &[0,0]+(-3,0) &[0,0]+(-3,3)}\restore &&
}$$
\medskip

\noindent This diagram is to be read as follows: The radical layering of any module $G$ having the above hypergraph (relative to a top element $z \in G$, say) is $\SS$, and the skeleton chosen to represent $G$ is $\S := \{\bz, \alpha_1 \bz, \beta_1 \alpha_1 \bz, \alpha_1 \beta_1 \alpha_1 \bz\}$; the edges corresponding to paths in the skeleton $\S$ are drawn as solid edges, while the dashed edges stand for the terminal arrows of $\S$-critical paths.  Moreover, the diagram contains the information that the support sets $\tau_{\bq}$ for the two $\S$-critical paths $\bq = \alpha_2  \bz$ and $\bq = \alpha_2 \beta_1 \alpha_1 \bz$ in $\Ptrunc$ (in the sense of Definition \ref{def2.3}), are $\tau_{\alpha_2 \bz} = \{ \alpha_1 \bz,\, \alpha_1 \beta_1 \alpha_1 \bz\}$ and $\tau_{\alpha_2 \beta_1 \alpha_1 \bz} = \{ \alpha_1 \beta_1 \alpha_1 \bz\}$.  Indeed, the ``dotted pool" indicates that the element $\alpha_2 z$ of $G$ is a $K$-linear combination of $\alpha_1 z$ and $\alpha_1 \beta_1 \alpha_1 z$ with coefficients in $K^*$; on the other hand, given that the set $\tau_{\alpha_2 \beta_1 \alpha_1 \bz}$ is a singleton, no extra pooling device is required to communicate the condition that $\alpha_2 \beta_1 \alpha_1 z \in G$ be a nonzero scalar multiple of $\alpha_1 \beta_1 \alpha_1 z$. 

Next, we consider the semisimple sequence $\SS' := (S_1^2, S_2^2, 0,0)$.  The modules in $\laySS'$ generically look as follows, relative to top elements $z_1, z_2$ say:
$$\xymatrixrowsep{2pc}\xymatrixcolsep{1.5pc}
\xymatrix{
1 \edge[d]_{\alpha_1}  &&&1 \edge[d]_{\alpha_1}  \\
2 \horizpool{4} & \save+<0ex,-0.1ex> \dashedge@/_/[ul]_(0.4){\alpha_2} \restore &&2 & \save+<0ex,-0.1ex> \dashedge@/_/[ul]_(0.4){\alpha_2} \restore 
}$$
\smallskip

\noindent Here, the dotted pool serves double duty in indicating that both $\alpha_2 z_1$ and $\alpha_2 z_2$ are linear combinations of $\alpha_1 z_1$ and $\alpha_1 z_2$ with (unspecified) nonzero coefficients.  In the sequel, we will use the fact that, generically, the modules  in $\laySS'$ decompose in the form
$$\xymatrixrowsep{2pc}\xymatrixcolsep{1pc}
\xymatrix{
1 \edge[d]_{\alpha_1} \dashedge@/^2ex/[d]^{\alpha_2}  &&\ar@{}[d]|{\;\;\textstyle\bigoplus} &&1 \edge[d]_{\alpha_1} \dashedge@/^2ex/[d]^{\alpha_2}  \\
2 &&&&2
}$$
\medskip

\textbf{(b)}  Now let $r = 3$. The hypergraphs
$$\xymatrixrowsep{2pc}\xymatrixcolsep{1.5pc}
\xymatrix{
&\txt{(I)} & &&&\txt{(II)} & &&&\txt{(III)}  \\
1 \dropup{\bz_1} \edge[dr]_{\alpha_1} &1 \dropup{\bz_2} \dashedge[d]^{\alpha_2} &1 \dropup{\bz_3} \drbl  &&1 \dropup{\bz_1} \edge[dr]_{\alpha_1} &1 \dropup{\bz_2} \dashedge[d]^(0.33){\alpha_2} &1 \dropup{\bz_3} \dashedge[dl]^{\alpha_3}  &&1 \dropup{\bz_1} \edge[d]_(0.4){\alpha_1} &1 \dropup{\bz_2} \edge[d]_(0.4){\alpha_2} &1 \dropup{\bz_3} \dashedge[d]^(0.4){\alpha_3}  \\
&2 & &&&2 & &&2 \horizpool{2} &2 &
}$$
\smallskip

\noindent are hypergraphs of modules $M_i  = (\bigoplus_{1 \le j \le 3} \la \bz_j) / U_i$, where  $\bz_j = e_1$ for $j = 1,2,3$.  Here the submodule $U_1$ is generated by $\alpha_2 \bz_2 - \alpha_1 \bz_1$, $\alpha_3 \bz_3$ and $\alpha_j \bz_k$ for $j \ne k$, while  $U_2$ is generated by $\alpha_2 \bz_2 - \alpha_1 \bz_1$, $\alpha_3 \bz_3 - \alpha_1 \bz_1$ and $\alpha_j \bz_k$ for $j \ne k$; finally, $U_3$ is generated by $\alpha_3 \bz_3 - (\alpha_1 \bz_1 + \alpha_2 \bz_2)$ and $\alpha_j \bz_k$ for $j \ne k$. The chosen reference skeleton of $M_1$ and $M_2$ is $\S := \{\bz_1, \bz_2, \bz_3, \alpha_1 \bz_1\}$, and that of $M_3$ is $\S \cup\{\alpha_2 \bz_2\}$.  Note that the dimension of $JM_3$ is $2$, the number of displayed vertices in the second row of the hypergraph.
\smallskip

Generically, the modules with radical layering $\SS' := (S_1^2, S_2^2, 0, 0)$ are indecomposable and have hypergraphs of the form
$$\xymatrixrowsep{2pc}\xymatrixcolsep{1.5pc}
\xymatrix{
1 \dropup{\bz_1} \edge[d]_{\alpha_1}  &&&&1 \dropup{\bz_2} \edge[d]_{\alpha_1}  \\
2 \horizpool{6} & \save+<0ex,-0.1ex> \dashedge@/_/[ul]_(0.4){\alpha_2} \restore & \save+<0ex,-0.1ex> \dashedge@/_4ex/[ull]_(0.3){\alpha_3} \restore &&2 & \save+<0ex,-0.1ex> \dashedge@/_/[ul]_(0.4){\alpha_2} \restore & \save+<0ex,-0.1ex> \dashedge@/_4ex/[ull]_(0.3){\alpha_3} \restore
}$$
\smallskip

The modules in $\laySS$, where $\SS := (S_1, S_2, S_1, S_2)$, generically have a hypergraph akin to the first one shown in part (a).
\qed
\end{example}


\section{The main results for general $\la$}
\label{sec3}

\subsection*{3.A.~Pared-down parametrizing varieties}  \hfill\par
\smallskip

Towards a description of $\overline{\laySS}$, we present lower-dimensional, more manageable varieties parametrizing the modules with radical layering $\SS$. 

\begin{definition}  \label{def3.1} \textbf{Decompositions of $K^{|\bd|}$ induced by semisimple sequences.}
Let $\SS =  (\SS_0, \dots, \SS_L)$ be a realizable semisimple sequence in $\lamod$ with $\udim \SS = \bd$, and write $d = |\bd|$.
Consider a vector space decomposition of $K^d$ which is {\it induced by\/} $\SS$ in the following sense: Namely,
$$K^d \ = \bigoplus_{0 \le l \le L,\, 1 \le i \le n} \K_{(l,i)}$$
with the property that $\dim  \K_{(l,i)} = \dim e_i \SS_l$ for all eligible indices $l$ and $i$.  Set $\K_l = \bigoplus_{1 \le i \le n} \K_{(l,i)}$ for $l \le L$, and $\K_{L+1} = \K_{(L+1,i)} = 0$. 
Given a family $(f_{\alpha})_{\alpha \in Q_1}$ of $K$-endomorphisms of $K^d$, the following notation will be convenient: Whenever $p = \alpha_l \cdots \alpha_1$ is a path of positive length $l$ in $Q$, we set $f_p = f_{\alpha_l} \circ \cdots \circ f_{\alpha_1}$; if $p$ is a path of length $0$, say $p = e_i$, then $f_p$ is defined to be the canonical projection 
$K^d \rightarrow   \bigoplus_{0 \le l \le L} \K_{(l,i)} \subseteq K^d$ relative to the above decomposition. Thus, we obtain a $K$-algebra homomorphism $KQ \rightarrow \End_K(K^d)$ such that $p \mapsto f_p$ for all paths $p$ in $Q$.  
\end{definition}

By  $Q_{\ge l}$ we denote the set of paths of length at least $l$ in $Q$. 
The following lemma is an upgraded version of \cite[Lemma 5.1]{irredcompI} and is proved analogously.   

\begin{lemma}  \label{lem3.2}  {\rm \textbf{Triangular points in $\modlad$.}}

\noindent   We refer to the above notation.   
Suppose that $f = \bigl( f_{\alpha} \bigr)_{\alpha \in Q_1}$ is a family of $K$-linear maps $K^d \rightarrow K^d$ satisfying the following three conditions:  For any arrow $\alpha$ from $e_i$ to $e_j$ and any index $l \in \{0, \dots, L\}$, 
\smallskip

{\rm (i)}\  $f_{\alpha} (\K_{(l, r)}) = 0$ \  for all \ $r \ne i$;
\smallskip

{\rm (ii)}\  $f_{\alpha} (\K_{(l, i)}) \subseteq \bigoplus_{l+1 \le m \le L} \K_{(m, j)}$; 
\smallskip

{\rm (iii)}\  whenever $c_1, \dots, c_m \in K$ and $p_1,\dots, p_m$ are paths of length $\le L$ in $Q$, which have a common starting vertex and a common terminal vertex,  
$$\sum_{1 \le j \le m} c_j p_j \in I\  \  \implies \  \  \sum_{1 \le j \le m} c_j f_{p_j } = 0.$$ 
  
\medskip

\noindent Then the following statements {\bf (I)} -- {\bf (III)} hold:
\smallskip

\noindent {\rm \textbf{(I)}} The tuple $f$ is a point in $\modlad$, and the radical layering of the corresponding $\la$-module $M_f$ satisfies $\, \SS(M_f) \ge \SS$.   Moreover, all $\la$-modules with radical layering $\SS$ are represented by suitable points $f \in \modlad$ satisfying {\rm (i)} -- {\rm (iii)}. 
\smallskip 
 
\noindent {\rm \textbf{(II)}} $J^l M_f =   \sum_{p \in Q_{\ge l}} \Img(f_p)$ for all $l \in \{0, \dots, L\}$.
  \smallskip

\noindent {\rm \textbf{(III)}} $\SS(M_f) = \SS$ precisely when, for each $h \in \{0, \dots, L\}$, the linear map 
$$(\K_0)^{Q_{\ge h}}  \rightarrow \bigoplus_{l \ge h} \K_l, \ \ \ (x_q)_{q \in Q_{\ge h}}\ \mapsto\ \sum_{q} f_q(x_q)$$ 
has maximal rank, namely $\sum_{l \ge h} \dim \K_l$. 
\qed\end{lemma}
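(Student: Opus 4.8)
The plan is to treat the three assertions in turn, building each on the preceding one, and to exploit the fact that conditions (i)--(iii) are exactly a ``triangularity'' condition with respect to the layer grading on $K^d$.

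First I would prove (I). Condition (iii) is precisely the statement that the assignment $p \mapsto f_p$ factors through $\la = KQ/I$, so the family $f = (f_\alpha)$ defines a point of $\modlad$ provided $\udim M_f = \bd$; the latter holds because the definition of the decomposition $K^d = \bigoplus \K_{(l,i)}$ forces $\dim e_i M_f = \sum_l \dim \K_{(l,i)} = \sum_l \dim e_i \SS_l = d_i$. For the inequality $\SS(M_f) \ge \SS$, I would show by induction on $l$ that $J^l M_f \subseteq \bigoplus_{m \ge l} \K_m$: this is immediate from (i) and (ii), since each arrow $f_\alpha$ raises the layer index by at least one. Combined with $\dim \bigoplus_{m \ge l} \K_m = \sum_{m \ge l} \udim \SS_m$, this gives $\udim(J^l M_f) \le \sum_{m \ge l} \udim \SS_m = \udim \bigl(\bigoplus_{m \ge l} \SS_m\bigr)$, which unwinds to $\SS(M_f) \ge \SS$ in the dominance order. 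For the final clause of (I) I would invoke (the cited) \cite[Lemma 5.1]{irredcompI}, or reprove it directly: given any $M$ with $\SS(M) = \SS$, choose a full sequence of top elements, lift a $K$-basis of $M$ compatible with the radical filtration, declare $\K_{(l,i)}$ to be the span of the chosen basis vectors in $e_i J^l M / $ (complement of $J^{l+1}M$), and read off the action of the arrows in these coordinates; conditions (i)--(iii) are then automatic.

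Next, (II). The inclusion $\sum_{p \in Q_{\ge l}} \Img(f_p) \subseteq J^l M_f$ is clear since $f_p$ is multiplication by $p \in J^l$ (followed, when $\len p = l$ exactly, by nothing further). For the reverse inclusion, note $M_f$ is generated by $\K_0 = \bigoplus_i \K_{(0,i)}$ as a $\la$-module (the layer-$0$ part generates, again by (i)--(ii) and a dimension count as above), so $J^l M_f = J^l \cdot \K_0 = \sum_{p: \len p \ge l} p \cdot \K_0 = \sum_{p \in Q_{\ge l}} \Img(f_p|_{\K_0})$; since each $f_p$ kills every $\K_{(0,i)}$ with $\strt(p) \ne i$ by (i), this equals $\sum_{p \in Q_{\ge l}} \Img(f_p)$.

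Finally, (III), which is the real content. By (II), $J^h M_f = \sum_{q \in Q_{\ge h}} \Img(f_q)$, and by the argument in (II) this image is spanned by $\{ f_q(x) : q \in Q_{\ge h},\ x \in \K_0 \}$, i.e.\ it is exactly the image of the linear map $\Phi_h : (\K_0)^{Q_{\ge h}} \to \bigoplus_{l \ge h}\K_l$ described in the statement (the codomain being the correct target by (ii), since every $f_q$ with $\len q \ge h$ lands in $\bigoplus_{l \ge h} \K_l$). Now $\SS(M_f) = \SS$ iff $\SS(M_f) \le \SS$ as well, i.e.\ iff $\udim(J^h M_f) \ge \sum_{l \ge h} \udim \SS_l = \dim \bigoplus_{l \ge h} \K_l$ for every $h$ — the reverse inequality being automatic from (I). Since $J^h M_f = \Img \Phi_h$ and $\dim \Img \Phi_h \le \dim \bigoplus_{l\ge h}\K_l$ always, equality of dimensions for all $h$ is equivalent to $\Phi_h$ being surjective, i.e.\ of maximal rank $\sum_{l \ge h} \dim \K_l$, for all $h$. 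One should check that it suffices to test layer by layer as $h$ ranges, which is exactly the family of maps in the statement; the equivalence ``$\SS(M_f)=\SS$ iff all $\Phi_h$ surjective'' then follows. I expect the main obstacle to be bookkeeping-level rather than conceptual: one must carefully identify $J^h M_f$ with $\Img \Phi_h$ (using (II) plus the generation statement) and verify that surjectivity of $\Phi_h$ for all $h$ is genuinely equivalent to the componentwise dimension equalities $\udim J^h M_f = \udim\bigoplus_{l\ge h}\SS_l$ — being a little careful that the dominance order comparison $\SS(M_f) = \SS$ translates precisely into these equalities and not merely into the total dimensions.
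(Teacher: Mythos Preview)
The paper does not actually prove this lemma in the text: it is flagged with \qed\ immediately after the statement, with the remark that it ``is an upgraded version of \cite[Lemma 5.1]{irredcompI} and is proved analogously.'' So there is no detailed argument in the paper to compare yours against; your outline is the natural one and is broadly correct. There is, however, one genuine slip that propagates from (II) into (III).

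In your treatment of (II) you assert that $M_f$ is generated as a $\la$-module by $\K_0$, justifying this by ``(i)--(ii) and a dimension count.'' That claim is false for general $f$ satisfying (i)--(iii): take all $f_\alpha = 0$, so $M_f$ is semisimple, $JM_f = 0$, and the $\la$-submodule generated by $\K_0$ is just $\K_0$ itself, which is a proper subspace whenever some $\SS_l$ with $l \ge 1$ is nonzero. The fix for (II) is simply to bypass generation: since $J^l$ is $K$-spanned by the residues of paths of length $\ge l$, one has directly $J^l M_f = \sum_{p \in Q_{\ge l}} p \cdot K^d = \sum_{p \in Q_{\ge l}} \Img(f_p)$, with no reference to $\K_0$.

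The same false generation claim reappears in (III), where you identify $J^h M_f$ with $\Img \Phi_h$ outright. In general one only has the chain $\Img \Phi_h \subseteq J^h M_f \subseteq \bigoplus_{l \ge h} \K_l$ (the first inclusion because $f_q(\K_0) = q \cdot \K_0 \subseteq J^h M_f$ for $\len q \ge h$, the second from part (I)). This chain already gives the backward implication: surjectivity of all $\Phi_h$ forces $J^h M_f = \bigoplus_{l \ge h} \K_l$, hence $\SS(M_f) = \SS$. For the forward implication you \emph{do} get that $\K_0$ generates $M_f$, but only after invoking the hypothesis $\SS(M_f) = \SS$: the case $h=1$ gives $JM_f = \bigoplus_{l \ge 1} \K_l$, so $\K_0$ surjects onto $M_f/JM_f$, and then $J^h M_f = J^h \cdot \K_0 = \Img \Phi_h$ follows. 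So your argument for (III) works once you move the generation claim inside the forward direction rather than asserting it unconditionally.
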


The lemma prompts an analysis of the following two subvarieties of $\modlad$.

\begin{subsec}  \label{def3.3}  \textbf{The varieties $\scaledrep(\ge \SS)$ and $\scaledrep \SS$.}  Keep $\SS$ and a decomposition of $K^d$ induced by $\SS$ fixed. The collection of all $f = (f_\alpha)$ satisfying conditions (i) -- (iii) of Lemma \ref{lem3.2} is a closed subvariety of $\modlad$ which we denote by $\scaledrep({\ge} \SS)$.   Indeed, the inclusion map 
$$\scaledrep({\ge} \SS) \hookrightarrow \modlad$$
provided by part (I) of Lemma \ref{lem3.2} is a closed immersion.  

To see this, take $B_{(l,\mu)}  = \bigl(b_{(l, \mu)}^{1}, \dots, b_{(l, \mu)}^{d_{l,\mu}} \bigr)$ to be an ordered basis for $\K_{(l,\mu)}$ and $B$ to be the lexicographically ordered union of the $B_{(l,\mu)}$.  Relative to this basis for $K^d$, the image of the above embedding consists of all those families $(F_\alpha)$ of matrices in $\modlad$ such that each $F_\alpha$ has a strictly lower triangular form of the following ilk:  $\bullet$ The only nonzero entries in any column labeled $(l, \mu)^{(j)}$ are confined to positions with lower label $(l+1, \nu), \dots, (L, \nu)$, provided $\alpha$ is an arrow $e_\mu \rightarrow e_\nu$, and  $\bullet$ condition (iii) of Lemma \ref{lem3.2} is satisfied. The latter requirement translates into polynomial equations for the entries of the $F_\alpha$.  This shows that the considered embedding is indeed a closed immersion. 

Observe moreover that, up to isomorphism, the variety $\scaledrep ({\ge} \SS)$ is determined by $\SS$, irrespective of the choice of a decomposition  $K^d = \bigoplus_{l,i} \K_{(l,i)}$ induced by $\SS$.  Lemma \ref{lem3.6} below will show that the $\GL(\bd)$-stable hull $\GL(\bd).\bigl( \scaledrep ({\ge} \SS) \bigr) \subseteq \modlad$ is, in fact, unique in the strict sense.  

We will identify $\scaledrep ({\ge} \SS)$ with its image under the above immersion whenever convenient. The subset of $\scaledrep({\ge} \SS)$ consisting of the points  which correspond to modules with radical layering $\SS$ will be denoted by $\scaledrep \SS$.   In view of part (III) of Lemma \ref{lem3.2}, $\scaledrep \SS$ is an open subvariety of $\scaledrep ({\ge} \SS)$. 
\end{subsec}

Next, we consider the effect of conjugation by $\GL(\bd)$ on the varieties $\scaledrep ({\ge} \SS)$ and $\scaledrep (\SS)$.   

\begin{subsec}  \label{def3.4}  \textbf{$\scaledrep ({\ge} \SS)$ under the $\GL(\bd)$-action.}  
Viewed as  subvarieties of $\modlad$, the varieties  $\scaledrep( {\ge} \SS)$ and $\scaledrep (\SS)$ fail to be stable under the $\GL(\bd)$-action in all nontrivial cases.  However, each of these varieties carries a conjugation action by the subgroup $\GL(\SS)$ of $\GL(\bd)$ which consists of the sequences $(g_1,\dots,g_n)$ with the property that each $g_i$ leaves the subspaces $\bigoplus_{j \ge l} \K_{(j,i)}$ invariant for all $l$.  Caveat:  The $\GL(\SS)$-action does not separate the isomorphism classes of the pertinent modules in general.  

By part (I) of Lemma \ref{lem3.2}, the closure of $\scaledrep ({\ge} \SS)$ under the $\GL(\bd)$-action on $\modlad$ is contained in the closed subvariety $\bigcup_{\SS' \ge \SS} \Rep \SS'$ of $\modlad$.  In fact, in view of the lemma,
$${\laySS} \ \ =\ \ \GL(\bd). \bigl(\scaledrep (\SS)\bigr) \ \ \subseteq \ \ \GL(\bd). \bigl(\scaledrep ({\ge} \SS)\bigr) \ \ \subseteq \ \ \bigcup_{\SS' \ge \SS} \Rep \SS'.$$  
Either inclusion may be proper.  This is obvious for the first.  Regarding the second, let $\la = K Q/ \langle \beta^2 \rangle$, for instance, where $Q :={ \xymatrixcolsep{1.5pc} \xymatrix{ 1 \ar[r]^{\alpha} &2 \ar@(ur,dr)^{\beta} }}$.  Moreover, take $\SS := (S_1^2, S_2^2)$ and $\SStilde := (S_1^2 \oplus S_2, S_2)$. Then $\SStilde \ge \SS$, but the module $N := S_1^2 \oplus \la e_2$ in $\Rep(\SStilde)$ is not isomorphic to a module in $\scaledrep({\ge} \SS)$.  Indeed, since $\K_{(0,2)} = 0$ and $\dim \K_{(1,2)} = 2$ in the decomposition of $K^4$ induced by $\SS$, we have $S_2^2 \subseteq \soc M$ for all $M$ in $\scaledrep(\ge\SS)$, while this is not the case for $N$.
\end{subsec}

\subsection*{3.B.~The closure of $\laySS$ in $\modlad$}  \hfill\par
\smallskip

We start with an elementary lemma characterizing the modules corresponding to the points in $\scaledrep ({\ge} \SS)$.  
For a given realizable semisimple sequence $\SS = (\SS_0, \dots, \SS_L)$ with $\udim \SS = \bd$, we fix a decomposition of $K^{|\bd|}$ induced by $\SS$ as in Definition \ref{def3.1}.  As we already pointed out, modulo isomorphism of varieties, this choice has no bearing on $\scaledrep ({\ge} \SS)$.  

\begin{definition}  \label{def3.5}  \textbf{Filtrations governed by $\SS$.}  Let $M$ be a $\la$-module.  A {\it filtration of\/} $M$ {\it governed by\/} $\SS$ is any chain of submodules
$$M = M_0 \supseteq M_1 \supseteq \cdots \supseteq M_{L+1} = 0$$
such that each factor $M_l/M_{l+1}$ is isomorphic to $\SS_l$; in other words,
$J M_l \subseteq M_{l+1}$ and $\udim M_l/M_{l+1} = \udim \SS_l$ for $0 \le l \le L$.  Filtrations with these properties  will also be referred to more briefly as {\it $\SS$-filtrations\/}. 
\end{definition}  

\begin{lemma-definition}  \label{lem3.6} {\rm\textbf{The variety $\Filt \SS$.}}  Let $\la = KQ/I$ be an arbitrary basic finite dimensional $K$-algebra.  Moreover, let $\SS$ be a semisimple sequence with $\udim \SS = \bd$.  Then the following conditions are equivalent for a $\la$-module $M$:
\begin{enumerate}
\item $M$ belongs to the $\GL(\bd)$-stable hull of $\scaledrep(\ge \SS)$, that is, to $\GL(\bd).\bigl(\scaledrep ({\ge} \SS) \bigr)$.
\item $M$ has a filtration governed by $\SS$. 
\end{enumerate}

In particular, $\GL(\bd).\bigl(\scaledrep ({\ge} \SS) \bigr)$ is independent of the choice of a decomposition of $K^{|\bd|}$ induced by $\SS$.  Motivated by the above equivalence, we will denote this subvariety of $\modlad$ by $\Filt \SS$. 
\end{lemma-definition}

\begin{proof} (1)$\implies$(2):  Suppose that $M$ is represented by some point $f = (f_\alpha) \in \scaledrep ({\ge} \SS)$.  This means that, up to isomorphism, $M$ equals $K^d$, equipped with the $\la$-module structure of Lemma \ref{lem3.2}.  In particular, we obtain a filtration of $M$ governed by $\SS$ by setting $M_l =  \bigoplus_{j \ge l,\, 1 \le i \le n} \K_{(j,i)}$.

(2)$\implies$(1):   Given an $\SS$-filtration $(M_l)_{0 \le l \le L+1}$ of $M$, we take $M_{(l,i)}$ to be a vector space complement of $e_i M_{l+1}$ in $e_i M_l$ for $0 \le l  \le L$.  Moreover, we set $f = (f_\alpha)_{\alpha \in Q_1}$, where $f_\alpha(x) = \alpha x$ for $x \in M$.  Then the decomposition $M = \bigoplus_{0 \le l \le L,\, 1 \le i \le n} M_{(l,i)}$ satisfies conditions (i) -- (iii) of Lemma \ref{lem3.2}, and thus can be shifted to a decomposition $\bigoplus_{0 \le l \le L,\, 1 \le i \le n} \K_{(l,i)}$ of $K^d$ induced by $\SS$ via a suitable family $h = (h_{(l,i)})$ of isomorphisms  $h_{(l,i)}: M_{(l,i)} \rightarrow \K_{(l,i)}$.  We conclude $h f h^{-1} \in \scaledrep ({\ge} \SS)$ and $M_{h f h^{-1}} \cong M$. 
\end{proof} 

The upcoming remarks (1)--(3) will be tacitly invested in the sequel. 

\begin{remarks}  \label{rem3.7}
\textbf{(1)} $\Filt \SS$ is always nonempty, irrespective of whether $\SS$ is realizable.  Indeed, the semisimple module $\bigoplus_{0\le l\le L} \SS_l$ has a filtration governed by $\SS$.  

\textbf{(2)} For any $M \in \lamod$, the chain $M \supseteq JM \supseteq \cdots \supseteq J^{L+1}M = 0$ is the only filtration of $M$ governed by $\SS(M)$; moreover,
 if $\SS'$ is any semisimple sequence governing a filtration of $M$, then $\SS' \le \SS(M)$.  
 
\textbf{(3)} The socle layering $\SS^*(M)$ of $M$ governs the socle filtration, provided the traditional indexing of the latter is reversed; i.e., if $\SS^*(M) = (\SS^*_0, \dots, \SS^*_m, 0 , \dots, 0)$ with $\SS^*_m \ne 0$, then the filtration $\soc_m M = M \supseteq \soc_{m-1} M \supseteq \cdots \supseteq \soc_0 M = \soc M \supseteq 0$ is governed by the (not necessarily realizable) semisimple sequence $(\SS^*_m, \dots, \SS^*_0, 0 , \dots, 0)$.  In particular, $(\SS^*_m, \dots, \SS^*_0, 0, \dots, 0) \le \SS(M)$.    

\textbf{(4)} K. Bongartz pointed out to us that the upcoming Theorem \ref{thm3.8} may alternatively be derived from a useful result of Steinberg.  We state it below, but omit detail.  We do fully anchor our own steppingstone to \ref{thm3.8} (namely Theorem \ref{prop3.9}) though.  The embedding of $\biggrassSS$ into a flag variety, as specified there, is instrumental in a further analysis of the closure of $\biggrassSS$ in $\grassbd$.

Lemma \cite[Lemma 2, p.68]{Ste}: Let $V$ be a quasi-projective variety carrying a morphic action by a connected linear algebraic group $G$. Moreover, let $U$ be a closed subvariety of $V$ which is stable under the action of some parabolic subgroup of $G$. Then the $G$-stable hull $G.U$ of $U$ in $V$ is in turn closed.
 \end{remarks}

\begin{theorem}  \label{thm3.8}  Let $\la$ be an arbitrary basic finite dimensional algebra, and let $\SS$ be a semisimple sequence in $\lamod$ with $\udim \SS = \bd$.
Then the $\GL(\bd)$-stable set $\Filt \SS$, which consists of the points in $\modlad$ encoding modules with $\SS$-filtrations, is a closed subvariety of $\modlad$.  
\smallskip

In particular, $\overline{\laySS} \subseteq \Filt \SS$, meaning that every module in $\overline{\laySS}$ has an $\SS$-filtration. 
\end{theorem}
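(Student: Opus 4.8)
The plan is to recognize $\Filt \SS$ as the $\GL(\bd)$-stable hull of a manifestly closed subvariety and then to invoke Steinberg's Lemma as quoted in Remark \ref{rem3.7}(4). By Lemma-Definition \ref{lem3.6} we have $\Filt \SS = \GL(\bd).\bigl(\scaledrep({\ge}\SS)\bigr)$, and by the discussion in \ref{def3.3} the subvariety $\scaledrep({\ge}\SS)$ is closed in $\modlad$ (the inclusion is even a closed immersion). Moreover $\modlad$ is affine, hence quasi-projective, and $\GL(\bd)$ is connected and acts morphically. So it suffices to produce a parabolic subgroup of $\GL(\bd)$ under which $\scaledrep({\ge}\SS)$ is stable; Steinberg's Lemma then immediately yields that $\GL(\bd).\bigl(\scaledrep({\ge}\SS)\bigr) = \Filt \SS$ is closed.

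The required parabolic subgroup is $\GL(\SS) \subseteq \GL(\bd)$ from \ref{def3.4}. First I would note that $\GL(\SS)$ is indeed parabolic: having fixed a decomposition $K^d = \bigoplus_{l,i}\K_{(l,i)}$ induced by $\SS$, the $i$-th factor of $\GL(\SS)$ is the stabilizer in $\GL_{d_i}(K)$ of the flag
$$0 = \K_{(L+1,i)} \subseteq \K_{(L,i)} \subseteq \K_{(L,i)}\oplus\K_{(L-1,i)} \subseteq \cdots \subseteq \bigoplus_{0\le l\le L}\K_{(l,i)} = K^{d_i},$$
hence a parabolic subgroup of $\GL_{d_i}(K)$; and a product of parabolic subgroups of the factors is parabolic in $\GL(\bd) = \prod_i \GL_{d_i}(K)$, the corresponding quotient being a product of projective flag varieties. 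Second, $\scaledrep({\ge}\SS)$ is $\GL(\SS)$-stable; this is precisely the content of \ref{def3.4}, and it follows from checking that conjugating a family $f = (f_\alpha)$ by $g = (g_i) \in \GL(\SS)$ preserves each of conditions (i)--(iii) of Lemma \ref{lem3.2}: the flag conditions imposed on the $g_i$ are exactly what is needed for (i) and (ii) to survive conjugation, while (iii) survives because (writing $g$ also for the block-diagonal element $\mathrm{diag}(g_i)$ of $\GL_d(K)$) one has $\sum_j c_j (g\cdot f)_{p_j} = g\bigl(\sum_j c_j f_{p_j}\bigr) g^{-1}$.

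For the ``in particular'' clause, \ref{def3.4} gives $\laySS = \GL(\bd).\bigl(\scaledrep(\SS)\bigr) \subseteq \GL(\bd).\bigl(\scaledrep({\ge}\SS)\bigr) = \Filt \SS$, and since $\Filt \SS$ is closed we conclude $\overline{\laySS} \subseteq \Filt \SS$. The reformulation in module-theoretic terms---that every module represented by a point of $\overline{\laySS}$ admits an $\SS$-filtration---is just the equivalence $(1)\Leftrightarrow(2)$ of Lemma-Definition \ref{lem3.6}.

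The only genuine work lies in the two verifications of the middle paragraph, and both are essentially already on record in \ref{def3.3} and \ref{def3.4}; the conceptual crux is simply to identify $\GL(\SS)$ as a parabolic subgroup so that Steinberg's Lemma applies verbatim, and I would regard this as the main---and rather mild---obstacle. I should note that the authors signal a self-contained route that bypasses Steinberg entirely: one embeds $\biggrassSS$ into a flag variety as in Theorem \ref{prop3.9}, controls the closure of $\biggrassSS$ inside $\grassbd$ there by a spreading-out argument, and then transports the conclusion back to $\modlad$ through the lattice isomorphism of Proposition \ref{prop2.1} (which preserves closures). On that alternative path the delicate step is the spreading-out analysis of $\overline{\biggrassSS}$ in the flag variety.
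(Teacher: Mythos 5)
Your proof is correct, and it takes the alternative route that the paper explicitly acknowledges but declines to carry out. The paper's own argument goes through the projective side: it embeds $\biggrassSS$ into the closed flag variety $\U \subseteq \flag(\partial_0,\dots,\partial_{L+1},\bP)$ via $\Phi$ (Theorem \ref{prop3.9}), observes that $\Psi(\U)$ is a closed, $\Aut_\la(\bP)$-stable subset of $\grassbd$ whose points are exactly those $C$ with $\bP/C$ admitting an $\SS$-filtration, and then transports closedness back to $\Filt\SS \subseteq \modlad$ through the lattice isomorphism of Proposition \ref{prop2.1}. You instead work entirely on the affine side, applying Steinberg's Lemma to $U = \scaledrep(\ge\SS)$ inside $V = \modlad$; this is precisely the route the authors attribute to Bongartz in Remark \ref{rem3.7}(4) with ``we omit detail.'' You supply that detail correctly: the identification of $\GL(\SS)$ with the product over $i$ of the stabilizers of the flags $\bigoplus_{j\ge l}\K_{(j,i)}$ in $\GL_{d_i}(K)$ does make $\GL(\SS)$ parabolic in $\GL(\bd)$, and your verification that conditions (i)--(iii) of Lemma \ref{lem3.2} survive conjugation by block-diagonal elements of $\GL(\SS)$ is sound (for (iii) one uses $(g\cdot f)_p = g f_p g^{-1}$ for all paths $p$, including the length-zero paths $e_i$, since $g$ preserves each summand $\bigoplus_l \K_{(l,i)}$). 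The trade-off is worth noting: your route is shorter but outsources the key geometric fact to Steinberg's Lemma; the paper's route is longer but builds the flag-variety embedding $\Phi$, which the authors reuse as a tool for analyzing $\overline{\biggrassSS}$ further. Both correctly deduce the ``in particular'' clause from $\laySS \subseteq \Filt\SS$ and the closedness just proved.
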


To prove Theorem \ref{thm3.8}, we switch back and forth between the affine and projective settings, $\modlad$ and $\grassbd$, using Proposition \ref{prop2.1} to transfer information from one to the other.  Again, we denote by $\bP$ the $\la$-projective cover of $\bigoplus_{1 \le i \le n} S_i^{d_i}$ in whose submodule lattice the points of $\grassbd$ are located.  We start by establishing a natural embedding of $\biggrassSS$ into a projective variety consisting of submodule flags $D_{L+1} \subseteq D_L \subseteq \cdots \subseteq  D_0 = \bP$ of $\bP$ which are governed by $\SS$.  It is this embedding which makes information about the closure of $\biggrassSS$ in $\grassbd$ more accessible.
   
\begin{theorem} \label{prop3.9}  Consider the subset $\U$ of the partial flag variety $\flag(\partial_0,\dots,\partial_{L+1},\bP)$ of $\bP$, where $\partial_i := (\dim\bP - |\bd|) + \sum_{l= L+1-i}^L |\udim \SS_l|$, consisting of the $\la$-submodule flags 
$$0 \subseteq D_{L+1} \subseteq D_L \subseteq \cdots \subseteq  D_0 = \bP \ \ \text{with} \ \  D_l/ D_{l+1}  \cong \SS_l \ \ \text{for} \ \ 0 \le l \le L.$$
Then $\U$ is closed, and there is a natural embedding of varieties
$$\Phi:  \biggrassSS \rightarrow \U,$$
which induces an isomorphism onto its image.
\end{theorem}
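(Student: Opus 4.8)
The plan is to write the flag map down explicitly, check it takes values in $\U$, verify it is a morphism, produce an explicit left inverse on its image, and — separately — establish closedness of $\U$ by a semicontinuity argument. First I would define $\Phi$. For $C \in \biggrassSS$ let $\pi_C : \bP \to \bP/C$ be the canonical quotient and set $D_l := \pi_C^{-1}\bigl(J^l(\bP/C)\bigr) = J^l\bP + C$ for $0 \le l \le L+1$. Since $\bP$ has Loewy length $L+1$, the bottom term is $D_{L+1} = C$ and $D_0 = \bP$; each $D_l$ is a $\la$-submodule (a sum of two submodules); and the second isomorphism theorem gives $D_l/D_{l+1} \cong J^l(\bP/C)/J^{l+1}(\bP/C) \cong \SS(\bP/C)_l = \SS_l$. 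In particular $\udim(D_l/D_{l+1}) = \udim\SS_l$, whence $\dim D_l = (\dim\bP - |\bd|) + \sum_{j=l}^{L}|\udim\SS_j| = \partial_{L+1-l}$, so $\Phi(C) := (D_{L+1} \subseteq D_L \subseteq \cdots \subseteq D_0)$ is a point of $\U$.

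Next I would check that $\Phi$ is a morphism. The decisive point is that on $\biggrassSS$ the dimensions $\dim(J^l\bP + C)$ are \emph{constant}, equal to the $\partial_{L+1-l}$ just computed, precisely because the radical layering $\SS(\bP/C) = \SS$ is fixed there. On the locally closed locus of a vector-space Grassmannian $\Gr(c,\bP)$ where the rank of the sum with a fixed subspace $V$ is a constant $k$, the assignment $C \mapsto V + C$ is a morphism to $\Gr(k,\bP)$ (routine, e.g.\ by passing to local coordinates, the point being that the quotient $\bP/(V+C)$ is then a vector bundle of the right rank). Applying this with $V = J^l\bP$ shows $C \mapsto D_l$ is a morphism $\biggrassSS \to \Gr(\partial_{L+1-l},\bP)$ for each $l$; assembling these coordinatewise maps and invoking the membership in $\U$ already verified yields the morphism $\Phi : \biggrassSS \to \U$.

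For the closedness of $\U$ in $\flag(\partial_0,\dots,\partial_{L+1},\bP)$, I would first note that the conditions ``$\alpha D_l \subseteq D_{l+1}$ for every arrow $\alpha$ of $Q$ and every $l$'' cut out a closed subset, on which each $D_l$ is automatically a $\la$-submodule and each $D_l/D_{l+1}$ is semisimple of total dimension $|\udim\SS_l|$ (the latter forced by the ambient flag variety). To pin down the dimension \emph{vectors}, observe that on this closed subset $D_l \mapsto \dim e_i D_l$ is upper semicontinuous — since $e_i D_l = e_i\bP \cap D_l$ and intersection dimension is upper semicontinuous — while $\sum_i \dim e_i D_l = \dim D_l$ is constant there; hence each $\dim e_i D_l$ is locally constant, so the locus where $\udim(D_l/D_{l+1}) = \udim\SS_l$ for all $l$ is a union of connected components. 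Therefore $\U$ is closed.

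Finally, $\Phi$ is injective, since $C$ is recovered from $\Phi(C)$ as its bottom term; concretely, the projection $\Psi$ of the flag variety onto its smallest member restricts to a morphism $\U \to \grassbd$ (the image lies in $\grassbd$ because $\udim(\bP/D_{L+1}) = \sum_l\udim\SS_l = \bd$), and $\Psi\circ\Phi = \id_{\biggrassSS}$. A morphism of varieties admitting a left inverse is an isomorphism onto its image, and that image — the equalizer of $\id$ and $\Phi\circ\Psi$ over the locally closed set $\Psi^{-1}(\biggrassSS) \subseteq \U$ — is a locally closed subvariety of $\U$ on which $\Psi$ restricts to the inverse of $\Phi$; this gives the asserted embedding. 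The step I expect to be the main obstacle is showing $\Phi$ is a morphism: one must convert the description $D_l = J^l\bP + C$ into genuine polynomial/bundle data, and this rests squarely on the constancy of $\dim(J^l\bP + C)$ over $\biggrassSS$. The semicontinuity bookkeeping behind the closedness of $\U$ is of the same flavour but more routine, after which injectivity and the immersion property are formal.
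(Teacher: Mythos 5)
The definition of $\Phi$, the verification that it lands in $\U$, the constant\nobreakdash-rank argument for morphism\nobreakdash-ness, and the use of the bottom\nobreakdash-component projection $\Psi$ as a left inverse are all sound; the morphism step parallels the paper, which instead works on the affine charts $\biggrassS$ and writes $\Phi$ out in Pl\"ucker coordinates $[c_1\wedge\cdots\wedge c_s\wedge\bp_1\wedge\cdots\wedge\bp_{t_l}]$, but the substance is the same.

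However, your argument for closedness of $\U$ has a genuine gap. You claim that on the locus in $\flag(\partial_0,\dots,\partial_{L+1},\bP)$ cut out by ``$\alpha D_l\subseteq D_{l+1}$ for every arrow $\alpha$ and every $l$'', each $D_l$ is automatically a $\la$-submodule. This is false: arrow\nobreakdash-closure does not give closure under the idempotents $e_i$. For instance, if $\SS_0$ is supported at two distinct vertices and $L=0$, then the arrow conditions are vacuous, yet a generic hyperplane of $\bP$ is not a $\la$-submodule. The subsequent dimension\nobreakdash-vector argument then invokes the identity $\sum_i\dim(e_i\bP\cap D_l)=\dim D_l$ as a constant, but this equality holds precisely when $D_l=\bigoplus_i(e_i\bP\cap D_l)$, i.e.\ precisely when $D_l$ is $e_i$-closed --- so the reasoning is circular. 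The repair is small: since $\bigoplus_i(e_i\bP\cap D_l)\subseteq D_l$ always, the inequality $\sum_i\dim(e_i\bP\cap D_l)\le\dim D_l$ always holds, and the left side is upper semicontinuous; hence imposing equality is a closed condition, and it is exactly the condition that $D_l$ be $e_i$-closed. Adding these closed conditions before your semicontinuity bookkeeping makes the argument go through. The paper sidesteps this issue entirely by placing $\U$ inside the product $\prod_{l}\biggrass_{\bd^{(l)}}(\la)$ of \emph{module} Grassmannians with prescribed dimension vectors $\bd^{(l)}=\bd-\sum_{l\le r\le L}\udim\SS_r$; there the $\la$-submodule property and the dimension vectors are built in, so that only the inclusion conditions $JD_l\subseteq D_{l+1}$ need to be shown closed, which is standard.
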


\begin{proof}[Proof of {\rm\ref{prop3.9}}]  Recall that a module $N$ belongs to $\biggrassSS$, meaning that $N  \cong \bP/C$ with $C \in \biggrassSS$, precisely when $\udim \SS_l = \udim J^l N / J^{l+1} N = \udim (C + J^l \bP)/ (C + J^{l+1} \bP)$ for all eligible $l$.  Set $\bd^{(L+1)}  = \bd$ and $\bd^{(l)} = \bd - \sum_{l \le r \le L} \udim \SS_r$ for $0 \le l \le L$.  In particular, we obtain $\biggrass_{\bd^{(L+1)}} (\la) =  \grassbd$, and $\biggrass_{\bd^{(0)}} (\la) = \{\bP\}$.  

Clearly, $\U$ is a subset of the projective variety 
$$\biggrass_{\bd^{(L+1)}} (\la)\, \times\, \biggrass_{\bd^{(L)}} (\la)\, \times\,  \cdots \times\, \biggrass_{\bd^{(0)}} (\la);$$ 
namely, $\U$ consists of those points $(D_{L+1} , \dots, D_0)$ in the direct product that correspond to flags 
$D_{L+1} \subseteq D_L \subseteq \cdots \subseteq D_0 = \bP$
of $\la$-submodules of $\bP$ satisfying 
\begin{equation*} \tag{$\ddagger$}
J D_l \subseteq D_{l+1} \ \ \ \text{and} \ \ \  \udim D_l / D_{l+1} = \udim \SS_l \ \ \ \text{for} \ \ \ 0 \le l \le L.
\end{equation*} 
To verify that the set $\U$ is closed in the given direct product of module Grassmannians, note that the equalities under $(\ddagger)$, specifying the dimension vectors  of the consecutive quotients $D_l/ D_{l+1}$, are actually automatic;  this is due to the placement of the $D_l$ in $\biggrass_{\bd^{(l)}} (\la)$, respectively.  As for the inclusions under $(\ddagger)$:  It is well-known that, given any $f \in \End_K(\bP)$, the requirement ``$f(D_{l}) \subseteq D_{l+1}$ for all $l$" cuts a closed subset out of the variety
$$\{(D_l) \in  \prod_{0 \le l \le L+1} \biggrass_{\bd^{(l)}} (\la) \mid D_{l+1} \subseteq D_{l} \ \text{ for}\ 0 \le l \le L\}$$
of partial submodule flags. Applying this to the linear maps $\bP \rightarrow \bP$ given by $x \mapsto \alpha x$ for $\alpha \in Q_1$, and investing the fact that the displayed partial flag variety is closed in the given product of Grassmannians, one finds that $\U$ is indeed closed.  In particular, $\U$ is a projective variety.

We have a natural embedding of $\biggrassSS$ into $\U$, namely
$$\Phi: \biggrassSS \rightarrow \U, \ \ \ C \mapsto (C +J^{L+1} \bP ,\, C + J^L \bP,\, \dots,\, C + J \bP,\, C + J^0 \bP),$$
where the leftmost entry $C + J^{L+1} \bP$ of the sequence equals $C$, and the rightmost entry equals $\bP$.  

To see that $\Phi$ is a morphism, we use the open affine cover $(\biggrassS)_{\S}$ of $\biggrassSS$, where $\S$ traces the skeleta with layering $\SS$ and $\biggrassS \ne \varnothing$.  For that purpose, recall the following description of $\biggrassS$ from \cite{hier}.  We view the $\la$-projective cover $P$ of $\SS_0$ as a direct summand of the projective cover $\bP = \bigoplus_{1 \le r \le |\bd|} \la \bz_r$ of $\bigoplus_{0 \le l \le L} \SS_l$, say $P = \bigoplus_{1 \le r \le t} \la \bz_r$.  On identifying the top elements $\bz_r$ of $P$ with those of $\Ptrunc$ (see \ref{def2.2}), we retrieve each of the subsets $\S $ of $\Ptrunc$ as a subset of $P$; as such, $\S$ consists of $|\bd|$ linearly independent elements of $\bP$.  Define $s := \dim \bP - |\bd|$, and let $\Schu(\S)$ be the big open Schubert cell of $\Gr( s, \bP)$ consisting of the vector space complements of the subspace $\bigoplus_{\bp \in \S} K \bp$ in $\bP$. Then $\biggrassS : = \biggrassSS \cap \Schu(\S)$ is open in $\biggrassSS$, and the union of the $\biggrassS$, with $\S$ as specified, equals $\biggrassSS$; cf.~\cite[Observation 3.6]{hier}.  By \cite[Theorem 3.17]{hier}, the $\biggrassS$ are affine; in fact, they can readily be realized as closed subsets of the $K$-space $\bigwedge^s \bP$ relative to the Pl\"ucker coordinates $[c_1 \wedge  \cdots \wedge c_s]$  of $\Schu(\S)$.  

 Hence it suffices to show that, for each such skeleton $\S$, the restriction $\Phi_{\S}$ of $\Phi$ to $\biggrassS$ is a morphism. For $0 \le j \le L$, let $\sigma_j$ be the set of all paths of length $j$ in $\sigma$. Enumerate the elements of $\S$ so that increasing indices correspond to weakly decreasing lengths. If $t_l := |\S_l| + \cdots + |\S_L|$, we thus obtain $\bigsqcup_{\,l \le j \le L} \S_j$ in the form
 $$\bigsqcup_{l \le j \le L} \S_j =  \{\bp_1, \dots, \bp_{t_l}\} \ \ \ \text{for} \ \ 0 \le l \le L.$$
We deduce that, given any $K$-basis $c_1,\dots,c_s$ for a point $C \in \biggrassS$, the elements $c_1,\dots,c_s,\bp_1, \dots, \bp_{t_l}$ form a $K$-basis for $C+ J^l \bP$: Indeed, $J^l \bP$ is generated by the paths in $\bP$ of the form $q \bz_r$, where $q$ is a path of length $\ge l$ in $KQ$ and $r \le |\bd|$. Moreover, by the definition of $\biggrassS$, $\bp_1, \dots, \bp_{t_l}$ induce a basis for $J^l(\bP/C) = (J^l \bP + C)/C$. This shows that the restriction $\Phi_\S$ sends any point $C \in \biggrassS$ to 
$$\bigl(\, [c_1 \wedge  \cdots \wedge c_s], [c_1 \wedge  \cdots \wedge c_s \wedge \bp_1 \wedge \cdots \wedge \bp_{t_L}], \dots, [c_1 \wedge  \cdots \wedge c_s \wedge \bp_1 \wedge \cdots \wedge \bp_{t_0}] \,\bigr),$$
whence $\Phi_{\S}$ is indeed a morphism. 

Finally, we observe that $\Phi$ induces an isomorphism onto its image. Indeed, the inverse is the restriction  to $\Img(\Phi)$ of the projection onto the leftmost component of the direct product of the $\biggrass_{\bd^{(l)}}(\la)$, namely the restriction of
$$\Psi: \prod_{0 \le l \le L+1} \biggrass_{\bd^{(l)}} (\la) \rightarrow \grassbd, \ \ \ (D_{L+1}, \dots, D_0) \mapsto 
D_{L+1}$$
to $\Img(\Phi)$. Therefore $\Phi^{-1} : \Img(\Phi) \rightarrow \biggrassSS$ is a morphism.
\end{proof}      

\begin{proof}[Proof of {\rm\ref{thm3.8}}]  We refer to the notation in the proof of \ref{prop3.9}. 
Since $\U$ is a projective variety, so is $\Psi(\U)$.  In particular, $\Psi(\U)$ is closed in $\grassbd$.  

By condition $(\ddagger)$ spelled out in the proof of 3.9, the image $\Psi(\U) \subseteq \grassbd$ consists precisely of those points $C \in \grassbd$ which have the property that $\bP/C$ has a filtration governed by $\SS$; in particular $\Psi(\U)$ is stable under the $\Aut_\la(\bP)$-action of $\grassbd$.  In light of Lemma \ref{lem3.6}, Proposition \ref{prop2.1} thus matches up $\Psi(\U)$ with the $\GL(\bd)$-stable subset $\Filt \SS$ of $\modlad$ and tells us that $\Filt \SS$ is in turn closed.

For the final claim, it suffices to observe that $\laySS \subseteq \Filt \SS$.  
\end{proof}

Theorem \ref{thm3.8} prompts us to introduce a new module invariant which will turn out to be highly informative towards the detection of irreducible components of $\modlad$.
 
\begin{definition}  \label{def3.10} \textbf{The module invariant $\Gamma$.}
 For $M \in \lamod$, let $\Gamma(M)$ denote the number of realizable semisimple sequences which govern some filtration of $M$.
\end{definition}
 
\begin{corollary}  \label{cor3.11}  The map $\Gabull : \modlad \rightarrow \NN$, which sends $x$ to $\Gamma(M_x)$, is upper semicontinuous.
 
 In particular:  Whenever $\C$ is an irreducible component of some $\laySS$ such that $1 \in \Gabull(\C)$, the closure $\overline{\C}$ is an irreducible component of $\modlad$.
 \end{corollary}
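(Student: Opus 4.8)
The plan is to extract everything from Theorem \ref{thm3.8}, which provides the closedness of the subvarieties $\Filt \SS$; once that is in hand, both assertions of the corollary become bookkeeping.

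For upper semicontinuity, I would first observe that the set $\R$ of realizable semisimple sequences $\SS$ with $\udim \SS = \bd$ is finite, since $\Seq(\bd)$ is. By Lemma-Definition \ref{lem3.6}, a $\bd$-dimensional module $M$ represented by $x \in \modlad$ has an $\SS$-filtration exactly when $x \in \Filt \SS$, so that $\Gamma(M_x) = \#\{\SS \in \R : x \in \Filt \SS\}$. Therefore, for each $m \in \NN$,
$$\{x \in \modlad : \Gabull(x) \ge m\} \ =\ \bigcup_{T}\ \bigcap_{\SS \in T} \Filt \SS,$$
where $T$ ranges over the $m$-element subsets of $\R$. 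Since $\R$ is finite and each $\Filt \SS$ is closed by Theorem \ref{thm3.8}, this is a finite union of closed sets, hence closed; thus $\Gabull$ is upper semicontinuous.

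For the detection statement, let $\C$ be an irreducible component of $\laySS$ with $1 \in \Gabull(\C)$, and fix $x \in \C$ with $\Gamma(M_x) = 1$; note that $\SS(M_x) = \SS$ (as $x \in \laySS$) and that $\SS$ is realizable (as $\laySS \ne \varnothing$). Choose an irreducible component $\mathcal{D}$ of $\modlad$ containing the irreducible closed set $\overline{\C}$. Since the finitely many locally closed strata $\laySS'$ partition $\modlad$ and $\mathcal{D}$ is irreducible, exactly one of them, say $\laySS'$, is dense in $\mathcal{D}$; this $\SS'$ is realizable, and Theorem \ref{thm3.8} gives $\mathcal{D} \subseteq \overline{\laySS'} \subseteq \Filt \SS'$. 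Hence $x \in \overline{\C} \subseteq \mathcal{D} \subseteq \Filt \SS'$, so $M_x$ has an $\SS'$-filtration, while it also has an $\SS$-filtration, namely its radical filtration. As $\SS$ and $\SS'$ are both realizable and $\Gamma(M_x) = 1$, we conclude $\SS' = \SS$, whence $\mathcal{D} \subseteq \overline{\laySS}$. Since $\laySS$ is locally closed in $\modlad$, it is open and dense in $\overline{\laySS}$, so the irreducible components of $\overline{\laySS}$ are precisely the closures of the components of $\laySS$; in particular $\overline{\C}$ is such a component. As $\overline{\C} \subseteq \mathcal{D} \subseteq \overline{\laySS}$ with $\mathcal{D}$ irreducible and closed in $\overline{\laySS}$, maximality of $\overline{\C}$ forces $\overline{\C} = \mathcal{D}$, so $\overline{\C}$ is an irreducible component of $\modlad$.

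I expect no genuine obstacle here beyond correctly invoking Theorem \ref{thm3.8}, which is where the real work sits. The only points demanding a moment's attention are the finiteness of $\Seq(\bd)$, the existence of a generic radical layering on an irreducible component $\mathcal{D}$ (immediate from the strata being finite in number and locally closed), and the standard topological fact identifying the components of $\overline{\laySS}$ with the closures of the components of the dense open subset $\laySS$.
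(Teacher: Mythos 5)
Your proposal is correct and follows essentially the same approach as the paper: both parts are reduced to the closedness of the varieties $\Filt\SS$ from Theorem~\ref{thm3.8}, with upper semicontinuity of $\Gabull$ obtained by writing $\Gabull^{-1}([m,\infty))$ as a finite union of finite intersections of the sets $\Filt\SS$, and the detection statement obtained by comparing the generic radical layering $\SS'$ of an ambient irreducible component of $\modlad$ (giving an $\SS'$-filtration) with the radical filtration of a module in $\C$ (giving an $\SS$-filtration) and invoking $\Gamma=1$ to force $\SS'=\SS$. The only stylistic difference is that you argue directly while the paper argues by contradiction, and you spell out the standard noetherian-topology fact that the components of $\overline{\laySS}$ are the closures of the components of the dense open subset $\laySS$, which the paper leaves implicit.
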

 
 \begin{proof} Let $\R$ be the set of all realizable semisimple sequences with dimension vector $\bd$.  Moreover, for $a \in \NN$, let $\R(a)$ be the collection of all those intersections $\bigcap_{i} \Filt(\SS^{(i)})$ which involve at least $\,a\,$ distinct sequences $\SS^{(i)} \in \R$.  Then the pre-image $\Gabull^{-1}([a, \infty))$ is the union of the sets in $\R(a)$.  Since each $\Filt(\SS^{(i)})$ is closed in $\modlad$ by Theorem \ref{thm3.8} and $\R(a)$ is finite, the union $\Gabull^{-1}([a, \infty))$ is closed.  This proves the claim regarding upper semicontinuity.  
 
 To justify the final assertion, suppose that $\overline{\C}$ is properly contained in some irreducible component $\C'$ of $\modlad$.  Then $\C'$ is an irreducible component of some $\overline{\laySS'}$ with $\SS' < \SS$.  Since $\overline{\laySS'} \subseteq \Filt(\SS')$ by Theorem \ref{thm3.8}, all modules in $\overline{\C}$ have a filtration governed by $\SS'$ in this situation, whence $\Gamma(M) > 1$ for all $M \in \laySS$.  
 \end{proof}  
 
Now let $D= \Hom_K(-,K) : \lamod \rightarrow \text{mod-}\la$ be the standard duality.  Clearly,  $M \in \lamod$  contains a descending submodule chain governed by $\SS = (\SS_0, \dots, \SS_L)$ if and only if $D(M)$ contains an ascending chain  $M'_{-1} = 0 \subseteq M'_0 \subseteq \cdots \subseteq M'_L = D(M)$ which is {\it cogoverned} by $D(\SS) = \bigl(D(\SS_0),\dots, D(\SS_L)\bigr)$, in the sense that each of the consecutive quotients $M'_l /M'_{l-1}$ is isomorphic to $D(\SS_l)$.  We define $\Cofilt \SS'$ to be the subset of $\modlad$ whose points correspond to the modules which are cogoverned by a semisimple sequence $\SS'$.  The duality $\widehat{D}: \Rep_\bd(\lamod) \rightarrow \Rep_\bd(\text{mod-}\la)$ of \cite[Section 2.C]{irredcompII} thus yields the following dual of \ref{thm3.8}; we spell it out since, in size comparisons of $\overline{\C^{(i)}}$ versus $\overline{\C^{(j)}}$, for irreducible components $\C^{(k)}$ of $\Rep{\SS}$, one gains mileage in combining \ref{thm3.8} with its dual.  (Recall: The process of filtering the irreducible components of $\modlad$ out of $\{\overline{\C} \mid \C \ \text{is a component of some}\ \laySS \ \text{with} \ \udim \SS = \bd\}$ rests on comparisons of this ilk.) 
 
\begin{theorem}  \label{thm3.12}  {\rm\textbf{Dual of \ref{thm3.8}.}}  If $\SS^* = (\SS^*_0, \dots, \SS^*_L\bigr)$ is a semisimple sequence in $\lamod$ with dimension vector $\bd$, let $\Corep \SS^*$, resp.~$\Cofilt \SS^*$, be the set of all points in $\modlad$ which correspond to modules with socle series $\SS^*$, resp.~to modules with filtrations cogoverned by $\SS^*$.
  
  Then $\Cofilt(\SS^*)$ is a closed subvariety of $\modlad$, and consequently $\overline{\Corep \SS^*} \subseteq \Cofilt \SS^*$.  In particular:  If $\C$ is an irreducible component of $\laySS$ such that, generically, the modules in $\C$ have socle layering $\SS^*$, then  
 $\overline{\C} \subseteq \Filt \SS \cap \Cofilt \SS^*$.  \qed
 \end{theorem}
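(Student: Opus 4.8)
The plan is to deduce Theorem~\ref{thm3.12} from Theorem~\ref{thm3.8} by transport of structure along the standard $K$-duality, so that no new geometry is needed. First I would pass to the opposite algebra $\la^{\mathrm{op}} = KQ^{\mathrm{op}}/I^{\mathrm{op}}$: it is again a basic finite dimensional $K$-algebra, with the same Loewy length $L+1$ as $\la$, and its left modules are exactly the right $\la$-modules, so that $\Rep_\bd(\la^{\mathrm{op}}) = \Rep_\bd(\text{mod-}\la)$ once the vertices of $Q^{\mathrm{op}}$ are identified with those of $Q$. Since $D = \Hom_K(-,K) \colon \lamod \to \text{mod-}\la$ fixes each simple $S_i$ and is $K$-linear on Hom-spaces, the sequence $D(\SS^*) := \bigl(D(\SS^*_0), \dots, D(\SS^*_L)\bigr)$ is again a semisimple sequence of dimension vector $\bd$. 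As recorded in the paragraph preceding the statement, applying $D$ to an ascending chain and passing to kernels shows that a $\la$-module $M$ carries a filtration cogoverned by $\SS^*$ if and only if $D(M)$ carries a filtration governed by $D(\SS^*)$; likewise $\SS^*(M) = \SS^*$ if and only if $\SS\bigl(D(M)\bigr) = D(\SS^*)$.

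Next I would invoke the variety isomorphism $\widehat D \colon \Rep_\bd(\lamod) \to \Rep_\bd(\text{mod-}\la)$ of \cite[Section~2.C]{irredcompII}, which carries the orbit of a module $M$ to the orbit of $D(M)$. By the equivalences just noted, $\widehat D$ restricts to a bijection of $\Cofilt(\SS^*)$ onto $\Filt_{\la^{\mathrm{op}}}\bigl(D(\SS^*)\bigr)$ and of $\Corep \SS^*$ onto $\Rep_{\la^{\mathrm{op}}}\bigl(D(\SS^*)\bigr)$. Applying Theorem~\ref{thm3.8} to the basic finite dimensional algebra $\la^{\mathrm{op}}$ and the semisimple sequence $D(\SS^*)$ of dimension vector $\bd$ gives that $\Filt_{\la^{\mathrm{op}}}\bigl(D(\SS^*)\bigr)$ is closed in $\Rep_\bd(\la^{\mathrm{op}})$; pulling this back along the isomorphism $\widehat D$ then shows that $\Cofilt(\SS^*)$ is closed in $\modlad$. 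Since $\Corep\SS^* \subseteq \Cofilt\SS^*$, it follows that $\overline{\Corep\SS^*} \subseteq \Cofilt\SS^*$.

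For the final assertion I would argue as follows. First, $\C \subseteq \laySS \subseteq \Filt\SS$ by Theorem~\ref{thm3.8}, and $\Filt\SS$ is closed, so $\overline{\C} \subseteq \Filt\SS$. Second, the hypothesis that the modules in $\C$ generically have socle layering $\SS^*$ provides a dense subset of $\C$ contained in $\Corep\SS^*$ — in fact a dense open one, since the socle-layering locus $\{x \mid \SS^*(M_x) \le \SS^*\}$ is open by upper semicontinuity of the dominance-ordered invariant $\SS^*_\bullet$, a component of the map $\Theta$ of \eqref{Thetadef}. Hence $\overline{\C} \subseteq \overline{\Corep\SS^*} \subseteq \Cofilt\SS^*$, and combining the two inclusions yields $\overline{\C} \subseteq \Filt\SS \cap \Cofilt\SS^*$.

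The one step needing care is the identification, under $\widehat D$, of $\Cofilt(\SS^*)$ with $\Filt_{\la^{\mathrm{op}}}\bigl(D(\SS^*)\bigr)$: this rests on the routine (but worth recording) index bookkeeping for dualizing a cogoverned filtration, together with the fact — supplied by \cite[Section~2.C]{irredcompII} — that $\widehat D$ is an honest isomorphism of varieties rather than merely a bijection of point sets. Beyond that, no genuinely new obstacle arises, since $\Cofilt$ is the exact mirror image of $\Filt$ and Theorem~\ref{thm3.8} carries the geometric weight.
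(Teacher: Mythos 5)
Your argument is correct and is essentially the paper's intended proof: the paper itself discharges this theorem by invoking the duality $\widehat D$ of \cite[Section~2.C]{irredcompII} and leaves the bookkeeping to the reader (hence the $\square$ at the end of the statement), and your write-up simply makes that bookkeeping explicit — passing to $\la^{\mathrm{op}}$, matching cogoverned filtrations of $M$ with governed filtrations of $D(M)$, pulling closedness back along $\widehat D$, and then combining with $\overline{\C} \subseteq \Filt\SS$ from Theorem~\ref{thm3.8}. No gap; the only marginal remark is that for the final inclusion you need not invoke semicontinuity of $\SS^*_\bullet$ at all — the hypothesis already hands you a dense subset of $\C$ inside $\Corep\SS^*$, and taking closures finishes it.
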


We close the section with an example to the effect that, in general, the inclusion $\overline{\laySS} \subseteq \Filt \SS$ may be proper and the final implication of Corollary \ref{cor3.11} need not be reversible.  This contrasts the situation where $\la = \latrunc$, as we will see in Section \ref{sec4}.  

\begin{example}  \label{ex3.13} Consider the quiver $Q$ of Example \ref{ex2.4} with $r=2$ and $s = 1$, and set 
$$\la = KQ/ \langle \beta_1 \alpha_2, \, \alpha_2 \beta_1 , \ \text{all paths of length} \ 4 \rangle.$$
Let $\bd := (2,2)$, $\SS := (S_1,S_2,S_1,S_2)$, and $\SS' := (S_1^2, S_2^2, 0, 0)$.  Then the varieties  $\laySS$ and $\laySS'$ are irreducible, and generically their modules  have hypergraphs
$$\xymatrixrowsep{1.75pc}\xymatrixcolsep{1.25pc}
\xymatrix{
1 \edge[d]_{\alpha_1} \dashedge@/^5ex/[ddd]^{\alpha_2}  &&&&&1 \edge@/_/[d]_{\alpha_1} \dashedge@/^/[d]^{\alpha_2} &&\ar@{}[d]|{\textstyle\bigoplus}  &&1 \edge@/_/[d]_{\alpha_1} \dashedge@/^/[d]^{\alpha_2}  \\
2 \edge[d]_{\beta_1}  &&&\txt{and}  &&2  &&&&2  \\
1 \edge[d]_{\alpha_1}  \\
2
}$$
respectively, whence both are contained in $\Filt \SS$. Clearly, $\overline{\laySS} \nsubseteq \overline{\laySS'}$, due to the generic Loewy lengths of the modules in $\laySS$ and $\laySS'$.  By comparing generic $\alpha_2$-ranks, one moreover finds that $\overline{\laySS'} \nsubseteq \overline{\laySS}$.  In conclusion, both $\overline{\laySS}$ and $\overline{\laySS'}$ are components of $\Filt \SS$.  In fact, both of these closures are even irreducible components of $\modlad$, the latter failing to satisfy the sufficient condition of Corollary \ref{cor3.11}.  Indeed, $\Gamma(M) = 2$ for all $M$ in $\overline{\laySS'}$.  

It is readily verified that the total number of components of $\modlad$ is three, the remaining component being $\overline{\laySS''} = \Filt(\SS'')$ for $\SS'' = (S_2, S_1, S_2, S_1)$.  
By contrast:  On replacing $\la$ by the associated truncated path algebra $\latrunc$, two of the three components of $\modlad$ fuse into a single component of $\Rep_{\bd}(\latrunc)$; see Example \ref{ex6.1}(b) below.   \qed 
\end{example}
\bigskip 


\section{The main results for truncated $\la$}
\label{sec4}

\emph{Throughout this section, $\la$ stands for a truncated path algebra of Loewy length $L+1$, i.e., $\la = \latrunc$}. In particular, the irreducible components of $\modlad$ are among the $\overline{\laySS}$, where $\SS$ traces the $\bd$-dimensional realizable semisimple sequences. The upcoming theory characterizes these  components in terms of their generic radical layerings $\SS$ (or, equivalently, in terms of their {\it generic modules\/} in the sense of Section \ref{sec5} below).  As in the special cases already mastered --  the local case and that of an acyclic quiver $Q$ --  the classification may be implemented on a computer; see Section 5.B.  However, the general algorithm is considerably more labor-intensive than the $\Theta$-test which applies to the local and acyclic cases.

As we will recall in Section \ref{sec5}, the generic properties of the modules in any component $\overline{\laySS}$ may be accessed via a single generic module $G(\SS)$.  A key asset of the truncated situation lies in the fact that such a module $G(\SS)$ is available on sight from $\SS$; detail will follow in Section 5.A below.  

Moreover, it is particularly easy to recognize realizability of semisimple sequences over truncated path algebras.  We recall the following from \cite[Criterion 3.2]{irredcompI}:

\begin{realcrit}  \label{crit4.1} Let $\bB = (\bB_{ij})$ be the adjacency matrix of $Q$, i.e., $\bB_{ij}$ is the number of arrows from $e_i$ to $e_j$. Then $\SS= (\SS_0,\dots,\SS_L)$ is realizable if and only if $\udim \SS_l \le (\udim\SS_{l-1}) \cdot \bB$ for all $1 \le l \le L$; the latter, in turn, is equivalent to realizability of the two-term sequences $(\SS_l,\SS_{l+1})$ in $(\la/J^2)\text{-mod}$ for $l < L$.
\qed
\end{realcrit}

In more intuitive terms: $\SS$ is realizable if and only if there exists an abstract skeleton with layering $\SS$.  Note moreover that, in the positive case, any such skeleton belongs to the {\it generic\/} set of  skeleta of the modules in $\overline{\laySS}$.    

Next, we find that the description of $\scaledrep ({\ge} \SS)$ may be simplified in the truncated situation, in that requirement (iii) of Lemma \ref{lem3.2} is now void. 

\begin{observation}  \label{obs4.2} \textbf{$\scaledrep ({\ge} \SS)$ is an affine space.}  Referring to the decomposition of $K^d$ induced by $\SS$ in Definition \ref{def3.1}, we obtain:  $\scaledrep ({\ge} \SS)$ consists of those points $f = (f_\alpha)_\alpha \in \bigl(\End_\la(K^d)\bigr)^ {Q_1}$ which satisfy the following conditions:  For any arrow $\alpha$ from $e_i$ to $e_j$:
\begin{enumerate}
 \item[$\bullet$] $f_{\alpha} (\K_{(l, r)}) = 0$ \ for all $r \ne i$, \ \  and
\item[$\bullet$] $f_{\alpha} (\K_{(l, i)}) \subseteq \bigoplus_{l+1 \le m \le L} \K_{(m, j)}$. 
\end{enumerate} 

 In particular, $\scaledrep({\ge} \SS)$ is a full affine space in this situation.  Indeed, the image of the closed immersion $\scaledrep ({\ge} \SS) \hookrightarrow \modlad$, which we presented in \ref{def3.3}, consists of {\it all\/} sequences of $d_i \times d_i$ matrices of the described lower triangular format.  Consequently, $\Filt \SS$, being a morphic image of $\GL(\bd) \times \scaledrep(\ge\SS)$, is irreducible as well.
\end{observation}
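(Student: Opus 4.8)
The one substantive point is that condition (iii) of Lemma \ref{lem3.2} becomes vacuous as soon as $\la = \latrunc$; granting that, the affine-space statement and the irreducibility of $\Filt\SS$ follow formally, so the plan is to dispatch (iii) first. Since $\la = KQ/I$ with $I$ generated by all paths of length $L+1$, the $K$-subspace $I$ of $KQ$ is spanned by the paths of length $\ge L+1$, and the residues of the paths of length $\le L$ constitute a $K$-basis of $\la$. Given paths $p_1,\dots,p_m$ of length $\le L$ sharing a common starting vertex and a common terminal vertex, and scalars $c_j$ with $\sum_j c_j p_j \in I$, I would collect equal paths and put $\gamma_p := \sum_{j:\, p_j = p} c_j$; then $\sum_p \gamma_p p \in I$ is a linear combination of \emph{distinct} basis elements of $\la$, forcing $\gamma_p = 0$ for every $p$. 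As $f_{p_j}$ depends only on the path $p_j$, this gives $\sum_j c_j f_{p_j} = \sum_p \gamma_p f_p = 0$, which is precisely (iii). Hence $\scaledrep(\ge \SS)$ is cut out of $\modlad$ by conditions (i) and (ii) alone.

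Next I would observe that (i) and (ii) are \emph{homogeneous linear} conditions on the matrix entries of the $f_\alpha$: relative to a basis of $K^d$ adapted to the decomposition $K^d = \bigoplus_{l,i}\K_{(l,i)}$, each of (i) and (ii) simply requires certain blocks of the matrix of $f_\alpha$ to vanish. Via the closed immersion $\scaledrep(\ge \SS)\hookrightarrow\modlad$ of \ref{def3.3}, this identifies $\scaledrep(\ge \SS)$ with the $K$-linear subspace of $\prod_{\alpha\in Q_1}\Hom_K\bigl(K^{d_{\strt(\alpha)}},K^{d_{\term(\alpha)}}\bigr)$ consisting of all tuples of matrices of the prescribed strictly block-lower-triangular shape; this is a full affine space, and in particular irreducible.

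For the final clause I would invoke Lemma \ref{lem3.6}, which gives $\Filt\SS = \GL(\bd).\bigl(\scaledrep(\ge \SS)\bigr)$, so that $\Filt\SS$ is the image of the (conjugation) action morphism $\GL(\bd)\times\scaledrep(\ge \SS)\to\modlad$. Since $\GL(\bd) = \prod_i \GL_{d_i}(K)$ is irreducible, being a nonempty open subset of an affine space, and $\scaledrep(\ge \SS)$ is irreducible by the previous step, the product is irreducible, hence so is its image $\Filt\SS$.

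I expect no real obstacle. The only place demanding genuine care is the verification that (iii) is vacuous, which rests solely on the monomial, length-graded form of the defining ideal of a truncated algebra together with the linear independence of short paths in $\la$; everything else is routine bookkeeping with the closed immersion of \ref{def3.3} and the standard fact that a morphic image of an irreducible variety is irreducible.
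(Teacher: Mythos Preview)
Your proposal is correct and follows essentially the same route as the paper: the paper simply asserts that ``requirement (iii) of Lemma \ref{lem3.2} is now void'' and then reads off the affine-space and irreducibility claims from \ref{def3.3} and Lemma \ref{lem3.6}, exactly as you do. Your argument in fact supplies the one detail the paper leaves implicit, namely the verification that (iii) is vacuous via the linear independence of paths of length $\le L$ modulo $I$.
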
 

This observation, in turn, allows us to derive a full characterization of the modules in $\overline{\laySS}$ from Theorem \ref{thm3.8}. 

\begin{theorem}  \label{thm4.3}  Suppose $\la$ is a truncated path algebra and $\SS$ a realizable semisimple sequence.  Then 
$$\overline{\laySS} = \Filt \SS.$$
In other words, a module $M$ belongs to $\overline{\laySS}$ precisely when $M$ has a filtration governed by $\SS$.

Dually, $\overline{\Corep \SS^*} = \Cofilt \SS^*$, where $\SS^*$ is the generic socle layering of the modules in $\laySS$. If $\overline{\laySS}$ is an irreducible component of $\modlad$, then
$$\overline{\Corep \SS^*} = \Cofilt \SS^* = \Filt \SS = \overline{\laySS}.$$ 
\end{theorem}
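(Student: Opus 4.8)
The first equality $\overline{\laySS} = \Filt\SS$ is the heart of the matter. By Theorem \ref{thm3.8} we already have the inclusion $\overline{\laySS} \subseteq \Filt\SS$, so only the reverse inclusion needs work. The idea is to produce, for an arbitrary point $f \in \Filt\SS$, a curve (or more simply a one-parameter family) inside $\modlad$ whose generic member lies in $\laySS$ and whose special member is the conjugacy class of $M_f$. Since $\Filt\SS = \GL(\bd).\bigl(\scaledrep(\ge\SS)\bigr)$ and $\laySS = \GL(\bd).\bigl(\scaledrep(\SS)\bigr)$, and since $\overline{\laySS}$ is $\GL(\bd)$-stable, it suffices to show that every point of $\scaledrep(\ge\SS)$ lies in the closure of $\scaledrep(\SS)$. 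Here Observation \ref{obs4.2} is decisive: in the truncated case $\scaledrep(\ge\SS)$ is a full affine space $\AA^N$ (the space of all tuples of matrices of the prescribed strictly-lower-triangular shape), with no further polynomial constraints, and $\scaledrep(\SS)$ is the open subset of it cut out by the nonvanishing of the rank conditions in part (III) of Lemma \ref{lem3.2}. Thus $\scaledrep(\SS)$ is a (possibly empty) open subset of an irreducible variety, and it is nonempty exactly because $\SS$ is realizable (Realizability Criterion \ref{crit4.1}, together with part (I) of Lemma \ref{lem3.2} which says every module with radical layering $\SS$ is represented by such an $f$). A nonempty open subset of an irreducible affine space is dense, so its closure is all of $\scaledrep(\ge\SS)$. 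Taking the $\GL(\bd)$-stable hull and using that the hull of a closure is contained in the closure of the hull (the action map $\GL(\bd)\times\scaledrep(\ge\SS)\to\modlad$ being continuous, in fact a morphism, so it carries $\GL(\bd)\times\overline{\scaledrep(\SS)}$ into $\overline{\GL(\bd).\scaledrep(\SS)} = \overline{\laySS}$) yields $\Filt\SS \subseteq \overline{\laySS}$.

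For the dual statement $\overline{\Corep\SS^*} = \Cofilt\SS^*$, where $\SS^*$ is the generic socle layering of the modules in $\laySS$: one first observes that, since $\SS$ is realizable and $\la = \latrunc$, the same argument applied to $\text{mod-}\la$ (which is again a truncated path algebra, on the opposite quiver) shows $\overline{\Corep\SS^*} = \Cofilt\SS^*$ for \emph{any} realizable cogoverning sequence $\SS^*$. That $\SS^*$, as defined (the generic socle layering on $\laySS$), is realizable follows because the generic module in $\laySS$ realizes it. Alternatively one transports Theorem \ref{thm4.3}'s first half through the duality $\widehat D$ of \cite[Section 2.C]{irredcompII}, exactly as Theorem \ref{thm3.12} was derived from Theorem \ref{thm3.8}.

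Finally, suppose $\overline{\laySS}$ is an irreducible component of $\modlad$. We must show the four sets coincide. We already have $\overline{\laySS} = \Filt\SS$ and $\overline{\Corep\SS^*} = \Cofilt\SS^*$. By Theorem \ref{thm3.12} (its last sentence), since the generic socle layering on $\laySS$ — equivalently on its dense open subset of $\overline{\laySS}$ — is $\SS^*$, we have $\overline{\laySS} \subseteq \Filt\SS \cap \Cofilt\SS^*$, hence $\overline{\laySS} \subseteq \Cofilt\SS^* = \overline{\Corep\SS^*}$. Conversely, $\Corep\SS^*$ is the locus of modules with socle layering exactly $\SS^*$; the generic module of $\laySS$ has socle layering $\SS^*$, so $\laySS \cap \Corep\SS^* \ne \varnothing$, i.e., $\Corep\SS^*$ meets the irreducible set $\overline{\laySS}$. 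But $\overline{\laySS}$ is a maximal irreducible subset of $\modlad$ (being an irreducible component), and $\overline{\Corep\SS^*}$ is irreducible (it equals $\Cofilt\SS^*$, which by the dual of Observation \ref{obs4.2} is a morphic image of an affine space crossed with $\GL(\bd)$, hence irreducible); since $\overline{\laySS}$ and $\overline{\Corep\SS^*}$ are two irreducible sets sharing a common point and $\overline{\laySS}$ is maximal, and since by Theorem \ref{thm3.12} $\overline{\laySS} \subseteq \overline{\Corep\SS^*}$, maximality forces equality $\overline{\laySS} = \overline{\Corep\SS^*}$. Combining, all four sets agree.

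\textbf{Main obstacle.} The genuinely substantive point is the reverse inclusion $\Filt\SS \subseteq \overline{\laySS}$, and within it the crucial input is that $\scaledrep(\ge\SS)$ has \emph{no} defining equations in the truncated case (Observation \ref{obs4.2}) so that $\scaledrep(\SS)$ is dense in it; everything else is formal manipulation with $\GL(\bd)$-hulls, the duality, and the maximality property of an irreducible component. The only subtlety to handle carefully is the passage between the $\scaledrep$-picture and the $\Filt/\Corep$-picture under $\GL(\bd)$, i.e., checking that closure and $\GL(\bd)$-hull interact as claimed, which is routine once one notes the relevant maps are morphisms of varieties.
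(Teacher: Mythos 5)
Your proof is correct and follows essentially the same route as the paper's: use Observation~\ref{obs4.2} to conclude that $\scaledrep(\ge\SS)$ is irreducible, deduce that the nonempty open subset $\scaledrep(\SS)$ is dense in it, pass to $\GL(\bd)$-hulls via Lemma~\ref{lem3.6} to get $\Filt\SS\subseteq\overline{\laySS}$ (the reverse inclusion being Theorem~\ref{thm3.8}), transfer the second claim through the duality, and then use maximality of the irreducible component together with $\overline{\laySS}\subseteq\overline{\Corep\SS^*}$ to force the final equalities.
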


\begin{proof} Concerning the first equality: In light of Observation \ref{obs4.2}, the variety $\scaledrep(\ge\SS)$ is irreducible.  Therefore the open subset $\scaledrep \SS$ is dense in $\scaledrep(\ge\SS)$, meaning that the closure $\overline{\scaledrep \SS}$ in $\modlad$ contains $\scaledrep(\ge\SS)$.  Moreover, $\scaledrep \SS \subseteq \scaledrep(\ge \SS)$ by construction, whence we obtain $\scaledrep(\ge\SS) \subseteq \overline{ \scaledrep \SS } \subseteq \overline{ \laySS }$. Given that $\overline{ \laySS }$ is $\GL(\bd)$-stable, it follows that $\Filt \SS \subseteq \overline{ \laySS }$ due to 3.6. The reverse inclusion was established in Theorem \ref{thm3.8}.  The second assertion follows by duality (see \ref{thm3.12} and \cite[Corollary 3.4.b]{irredcompII}).  

In particular, duality guarantees that the varieties $\overline{\Corep \SS^*}$ are again irreducible.
For arbitrary $\SS$, we moreover find  $\laySS \subseteq \overline{\Corep \SS^*}$, since the modules in a dense open subset of $\laySS$ have socle layering $\SS^*$. In case $\overline{\laySS}$ is an irreducible component of $\modlad$, we thus infer $\overline{\laySS} = 
\overline{\Corep \SS^*}$, which completes the argument.
\end{proof}

The following consequence, addressing the relative sizes of the closures $\overline{\laySS}$, is now immediate.  It was independently obtained by I. Shipman with different methods; he also developed an algorithm for checking the considered inclusion via matrices of dimension vectors \cite{Shi}.  Algorithmic counterparts to the upcoming Corollary \ref{cor4.4} and Theorem \ref{thm4.5} will be addressed in 5.B.

\begin{corollary}  \label{cor4.4}  {\rm\textbf{Comparing the varieties $\overline{\laySS}$.}}  Let $\la$ be a truncated path algebra.  Moreover, suppose that $\SS$ and $\SS'$ are realizable semisimple sequences with the same dimension vector.  Then $\overline{\laySS}\subseteq \overline{\Rep \SS'}$ if and only if {\rm{(}}generically{\rm{)}} the modules in $\overline{\laySS}$ have filtrations governed by $\SS'$. \qed
\end{corollary}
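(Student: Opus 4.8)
The plan is to reduce the whole statement to the identifications supplied by Theorem \ref{thm4.3} together with the closedness assertion of Theorem \ref{thm3.8}. Since $\la$ is truncated and both $\SS$ and $\SS'$ are realizable, Theorem \ref{thm4.3} gives $\overline{\laySS} = \Filt \SS$ and $\overline{\Rep \SS'} = \Filt \SS'$; in particular, by Observation \ref{obs4.2}, the variety $\overline{\laySS}$ is irreducible, and by Theorem \ref{thm3.8} the set $\Filt \SS'$ is closed in $\modlad$. I would also record at the outset that $\laySS$, being locally closed in $\modlad$, is open and dense in $\overline{\laySS}$, so that ``the generic modules of $\overline{\laySS}$'' may legitimately be read as ``the modules parametrized by $\laySS$''.

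For the ``only if'' direction I would argue directly: if $\overline{\laySS} \subseteq \overline{\Rep \SS'}$, then by Theorem \ref{thm4.3} this reads $\overline{\laySS} \subseteq \Filt \SS'$, and the equivalence in Lemma-Definition \ref{lem3.6} then says that every module represented by a point of $\overline{\laySS}$ carries a filtration governed by $\SS'$. A fortiori this holds for the modules in the dense open subset $\laySS$, which is exactly the asserted generic property.

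For the ``if'' direction I would unwind the word ``generically'': there is a dense open subset $V \subseteq \overline{\laySS}$ such that $M_x$ has an $\SS'$-filtration for every $x \in V$, i.e., $V \subseteq \Filt \SS'$ by Lemma-Definition \ref{lem3.6}. Since $\Filt \SS'$ is closed by Theorem \ref{thm3.8} and $V$ is dense in the irreducible variety $\overline{\laySS}$, passing to closures yields $\overline{\laySS} = \overline{V} \subseteq \Filt \SS' = \overline{\Rep \SS'}$, using Theorem \ref{thm4.3} once more. This is the desired inclusion, which closes the argument.

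The main point calling for a little care --- rather than a genuine obstacle --- is the interaction between ``generic'' behaviour and closedness. Because $\Filt \SS'$ is closed and $\overline{\laySS}$ is irreducible, the intersection $\overline{\laySS} \cap \Filt \SS'$ is a closed subset of $\overline{\laySS}$ that is either all of $\overline{\laySS}$ or proper and hence not dense. Consequently the three conditions ``the generic module of $\overline{\laySS}$ has an $\SS'$-filtration'', ``\emph{every} module in $\overline{\laySS}$ has an $\SS'$-filtration'', and ``$\overline{\laySS} \subseteq \Filt \SS'$'' are mutually equivalent; it is precisely this collapse that makes the filtration criterion simultaneously necessary and sufficient. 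Everything else is bookkeeping with the results already established, so I would expect the proof to be short.
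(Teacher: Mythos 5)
Your argument is correct and is exactly the chain of deductions the paper intends when it leaves the corollary as an immediate consequence of Theorems~\ref{thm4.3}, \ref{thm3.8}, and Lemma-Definition~\ref{lem3.6} (the paper gives no proof beyond the \qed, treating it as obvious). Your closing observation that closedness of $\Filt \SS'$ together with irreducibility of $\overline{\laySS}$ collapses ``generically'', ``everywhere'', and the inclusion $\overline{\laySS} \subseteq \Filt \SS'$ into one condition is precisely the point that justifies the parenthetical ``(generically)'' in the statement, and it is well said.
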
 

The upper semicontinuous map $ \Gabull: \modlad \rightarrow \NN$ of {\rm \ref{cor3.11}}
detects all irreducible components of $\modlad$.  Indeed, $\SS$ is the generic radical layering of an irreducible component of $\modlad$ if and only if $\Gabull$ attains the value $1$ on $\laySS$.
We record this as follows. 
    
\begin{theorem}  \label{thm4.5} Let $\la$ be a truncated path algebra.  If $\, \SS^{(1)}, \dots, \SS^{(m)}$ are the distinct $\bd$-dimensional semisimple sequences $\SS$ with $1 \in \Gabull(\laySS)$, then 
$$\Filt(\SS^{(1)}) = \overline{\Rep \SS^{(1)}}, \; \dots\; ,\; \Filt(\SS^{(m)}) = \overline{\Rep \SS^{(m)}}$$ 
are the distinct irreducible components of $\modlad$.
\end{theorem}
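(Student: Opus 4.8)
The plan is to combine Theorem \ref{thm4.3} with the upper semicontinuity machinery of Corollary \ref{cor3.11}. First I would recall that, by the general observation in the introduction (and since $\la = \latrunc$), every irreducible component of $\modlad$ is of the form $\overline{\laySS}$ for some realizable $\bd$-dimensional semisimple sequence $\SS$; and by Theorem \ref{thm4.3} each such closure equals $\Filt \SS$. So the task reduces to characterizing, among all realizable $\SS$, exactly those for which $\Filt \SS = \overline{\laySS}$ is inclusion-maximal in $\modlad$.

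The core of the argument is the equivalence: $\overline{\laySS}$ is an irreducible component of $\modlad$ $\iff$ $1 \in \Gabull(\laySS)$ $\iff$ $1 \in \Gabull(\overline{\laySS})$. For the ``$\Leftarrow$'' direction this is precisely the final assertion of Corollary \ref{cor3.11}: if some module $M \in \laySS$ satisfies $\Gamma(M) = 1$ (equivalently, $\SS(M) = \SS$ is the only realizable sequence governing a filtration of $M$, i.e.\ $M \supseteq JM \supseteq \cdots \supseteq J^{L+1}M$ is its only realizably-governed filtration), then $\overline{\laySS}$ is a component. For the ``$\Rightarrow$'' direction I would argue contrapositively, reproducing the second paragraph of the proof of Corollary \ref{cor3.11}: if $\overline{\laySS}$ is \emph{not} maximal, then it is properly contained in some component $\overline{\Rep\SS'}$ with $\SS' < \SS$; by Theorem \ref{thm3.8} (here even Theorem \ref{thm4.3}) every module in $\overline{\laySS}$ then has a filtration governed by $\SS'$, and $\SS'$ is realizable since $\Rep\SS' \ne \varnothing$; together with the always-present filtration governed by $\SS(M) = \SS$ itself (Remark \ref{rem3.7}(2)), this forces $\Gamma(M) \ge 2$ for every $M \in \laySS$, i.e.\ $1 \notin \Gabull(\laySS)$. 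The key point making the truncated case tight, as opposed to the general case of Example \ref{ex3.13}, is exactly that in Theorem \ref{thm4.3} the inclusion $\overline{\laySS} \subseteq \Filt\SS$ is an equality, so non-maximality genuinely produces a second realizable governing sequence rather than merely a second (possibly non-realizable) filtration.

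It remains to package this into the stated form. Let $\SS^{(1)}, \dots, \SS^{(m)}$ enumerate the distinct $\bd$-dimensional semisimple sequences with $1 \in \Gabull(\Rep\SS^{(j)})$; each such $\SS^{(j)}$ is automatically realizable (otherwise $\Rep\SS^{(j)} = \varnothing$ and $\Gabull$ takes no value on it). By the equivalence just established, $\overline{\Rep\SS^{(j)}} = \Filt(\SS^{(j)})$ (Theorem \ref{thm4.3}) is an irreducible component of $\modlad$; conversely every component of $\modlad$ arises this way, since it equals some $\overline{\laySS}$ which, being maximal, satisfies $1 \in \Gabull(\laySS)$, hence $\SS$ is one of the $\SS^{(j)}$. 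Distinctness of the $\overline{\Rep\SS^{(j)}}$ follows because distinct semisimple sequences $\SS \ne \SS'$ give disjoint locally closed strata $\laySS, \Rep\SS'$, so a module generic in $\overline{\Rep\SS^{(j)}}$ has radical layering $\SS^{(j)}$, which distinguishes the closures. I do not anticipate a serious obstacle here: everything is a bookkeeping rearrangement of Theorems \ref{thm3.8}, \ref{thm4.3} and Corollary \ref{cor3.11}. The one subtlety to state carefully is why $1 \in \Gabull(\laySS)$ if and only if $1 \in \Gabull(\overline{\laySS})$ --- this is immediate since $\Gamma \ge 1$ always (the radical layering of any module governs its radical filtration) and $\Gabull$ is upper semicontinuous with $\laySS$ dense in $\overline{\laySS}$, so the generic value of $\Gabull$ on $\overline{\laySS}$ is its value on $\laySS$.
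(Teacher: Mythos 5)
Your overall strategy — reduce to characterizing, via $\Gabull$ and Theorem \ref{thm4.3}, when $\overline{\laySS}$ is maximal — matches the paper's, but there is a genuine gap hiding in a misplaced contrapositive. Write $P$ for ``$\overline{\laySS}$ is an irreducible component of $\modlad$'' and $Q$ for ``$1 \in \Gabull(\laySS)$.'' Your ``$\Leftarrow$'' direction is $Q \implies P$, and you correctly get it from Corollary \ref{cor3.11}. For ``$\Rightarrow$'' (i.e.\ $P \implies Q$) you say you argue contrapositively, but you then \emph{assume} $\neg P$ (``$\overline{\laySS}$ is not maximal'') and \emph{conclude} $\neg Q$. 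That is $\neg P \implies \neg Q$, which is the contrapositive of $Q \implies P$ — not of $P \implies Q$. So you have in effect proved $Q \implies P$ twice and $P \implies Q$ not at all. The missing implication is exactly what exhaustiveness of your list requires: without it, there could be a component $\overline{\laySS}$ with $1 \notin \Gabull(\laySS)$, so $\SS$ would be omitted from $\SS^{(1)}, \dots, \SS^{(m)}$, and your final ``conversely every component of $\modlad$ arises this way'' is not justified by anything you established earlier.

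The correct contrapositive $\neg Q \implies \neg P$ needs a different idea than embedding into a larger component. Assume $1 \notin \Gabull(\laySS)$, so every $M \in \laySS$ has $\Gamma(M) \ge 2$ and hence lies in some $\Filt \SS'$ with $\SS'$ realizable and $\SS' \ne \SS$. Since there are only finitely many such $\SS'$ and each $\Filt \SS'$ is closed (Theorem \ref{thm3.8}), one obtains $\overline{\laySS} \subseteq \bigcup_{\SS' \ne \SS} \Filt \SS'$, and Theorem \ref{thm4.3} identifies each $\Filt \SS'$ with $\overline{\Rep \SS'}$. Irreducibility of $\overline{\laySS}$ (from Theorem \ref{thm4.3} again) then forces $\overline{\laySS} \subseteq \overline{\Rep \SS'}$ for a single $\SS' \ne \SS$, and this inclusion is proper because the generic radical layerings differ — so $\overline{\laySS}$ is not maximal. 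It is this finite-cover-plus-irreducibility argument, rather than a rerun of Corollary \ref{cor3.11}, that supplies the direction you are missing.
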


\begin{proof}  Suppose $\SS$ is a realizable $\bd$-dimensional semisimple sequence. If $1 \in \Gabull(\laySS)$, then $\laySS \nsubseteq \Filt(\SS') = \overline{\laySS'}$ for any semisimple sequence $\SS' \ne \SS$, whence $\overline{\laySS}$ is an irreducible component of $\modlad$.

If, on the other hand, $1 \notin \Gabull(\laySS)$, then every module in $\laySS$ is contained in some variety $\Filt \SS'$, where $\SS'$ is a realizable semisimple sequence different from $\SS$. Therefore, 
$$\overline{\laySS}\ \  \subseteq\  \bigcup_{\SS' \ \text{realizable},\ \SS' \ne \SS} \Filt \SS' \ \ = \ \bigcup_{\SS' \ \text{realizable},\ \SS' \ne \SS} \overline{\laySS'},$$
the final equality being part of \ref{thm4.3}. 
Irreducibility of $\overline{\laySS}$ thus implies  $\overline{\laySS} \subseteq \overline{\laySS'}$ for some $\SS' \ne \SS$, which shows that $\overline{\laySS}$ fails to be maximal irreducible. \end{proof}

\section{Applications of Section \ref{sec4}:  Generic modules for\\ the components over truncated path algebras}
\label{sec5}

Barring Example \ref{ex5.2}(b), {\it $\la$ will, throughout this section, stand for a truncated path algebra of Loewy length $L+1$.  Moreover, $\bd$ will be a dimension vector of  $\la$\/}. 

 If one extends the base field $K$ of $\la$ to an algebraically closed field of infinite transcendence degree over its prime field $K_0$, neither the description of the components of $\modlad$, nor the generic properties of their modules will be affected; see \cite[Section 2.B]{irredcompII}. This means that, in developing a generic representation theory for the irreducible components of $\modlad$, one does not lose generality in assuming that $\trdeg(K:K_0) = \infty$. 
\subsection*{5.A.~Generic modules}  \hfill\par
\smallskip

Assume  that $K$ has infinite transcendence degree over $K_0$, and let $\SS$ be a realizable $\bd$-dimensional semisimple sequence. Given that $\la = \latrunc$, we will denote the coordinatized projective $\latrunc$-projective cover $\Ptrunc = \bigoplus_{1 \le r \le t} \la \bz_r$ of $\SS_0$ (cf.~Section \ref{sec2}) more simply by $P$.  

Let $\S$ be any skeleton with layering $\SS$.  Then the following module $G = G(\SS)$ is generic for $\overline{\laySS}$ in the strict sense of \cite[Definition 4.2]{BHT}:  
$$G = P/ C, \ \  \text{where} \ \  C =  \sum_{\bq\; \S\text{-critical}} \la \biggl(  \bq - \sum_{\bp \in \S_\bq} c_{\bq, \bp} \bp \biggr)$$
for some family $(c_{\bq, \bp})_{\bq\; \S\text{-critical},\, \bp \in \S_\bq}$ of scalars which is algebraically independent over $K_0$.  
That $G$ is {\it generic\/} means that $G$ has all those generic properties of the modules in $\overline{\laySS}$ which are invariant under Morita self-equivalences $\lamod \rightarrow \lamod$ induced by automorphisms of $K$ over $\overline{K_0}$.  Moreover, $G$ is unique relative to this property, up to such a Morita self-equivalence. We refer to \cite[Theorem 5.12]{BHT}, and to \cite[Section 4]{BHT} for a more general statement addressing arbitrary path algebras modulo relations.
\smallskip

\textbf{Filtrations of generic modules:} In particular, the preceding comments ensure that tests for semisimple sequences which generically govern filtrations of the modules in $\overline{\laySS}$ may be confined to ``the" generic module $G = G(\SS)$.
\smallskip

Caveat: Suppose $G$ is a generic module for an irreducible component of $\modlad$.  While the combination of \ref{cor3.11} and \ref{thm4.5} guarantees that the radical layering $\SS(G)$ is the only {\it realizable\/} semisimple sequence to govern a filtration of $G$, there will in general be further, non-realizable, sequences governing suitable filtrations.  For instance, let $Q$ be the quiver 
$\xymatrixrowsep{2pc}\xymatrixcolsep{1pc}
\xymatrix{
4  &1 \ar[l] \ar[r]^{\alpha}  &2 \ar[r]  &3
}$ and $\la$ any truncated path algebra based on $Q$.
  If $\bd = (0, 1,1,1)$, then $\modlad$ is irreducible with generic module $G = \la \alpha \oplus S_4$ for any truncation $\la$ of $KQ$.  In particular, $\SS(G) = (S_2 \oplus S_4, S_3)$ is the only realizable semisimple sequence governing all modules with dimension vector $\bd$.  In case $\la$ has Loewy length $2$, The sequence $(S_2, S_3 \oplus S_4)$ also governs a filtration of $G$; if the Loewy length of $\la$ is $3$, then  $(S_2, S_3, S_4)$ and $(S_4, S_2, S_3)$ are additional (non-realizable) semisimple sequences governing filtrations of $G$. 
  

\subsection*{5.B.~Algorithmic aspect of Corollary \ref{cor4.4} and Theorem \ref{thm4.5}}  \hfill\par
\smallskip

5.A tells us that, for any two realizable $\bd$-dimensional semisimple sequences $\SS$ and $\SS'$, we have
$$\laySS \subseteq \overline{\laySS'}\ \ \iff \ \ G(\SS) \in \Filt \SS'.$$ 
From Lemma \ref{lem3.6}, we moreover know that $\Filt \SS'$ is the $\GL(\bd)$-stable hull of $\scaledrep ({\ge} \SS')$.   Hence, if the point $(G_\alpha)_{\alpha \in Q_1} \in \laySS$ represents the isomorphism class of $G(\SS)$, the question of whether $G(\SS) \in \Filt \SS'$ boils down to the question of whether the matrices $G_\alpha$ are ``simultaneously" similar $($i.e., similar by way of a single element of $\GL(\bd)$) to matrices having the lower triangular format $F_\alpha$ characterizing the points in $\scaledrep(\ge \SS')$.   This format is spelled out in \ref{def3.3}.

Given that there are only finitely many $\bd$-dimensional semisimple sequences to be compared, this means in particular that the decision of whether or not $\overline{\laySS}$ is a component of $\modlad$ is algorithmic.

\subsection*{5.C.~Interconnections among the components}  \hfill\par
\smallskip

The following statement rephrases a result of Crawley-Boevey and Schr\"oer \cite[Theorem 1.1]{CBS} in terms of generic modules: If $G$ is a generic module for an irreducible component $\C$ of $\modlad$ and $G = \bigoplus_{1 \le j \le s} G_j$ is a decomposition into direct summands, then each $G_j$ is generic for an irreducible component of $\Rep_{\udim G_j} (\la)$.  
Over a truncated path algebra, this result may be sharpened as follows.  

Call a submodule $M$ of $N$ {\it layer-stably embedded\/} in $N$ if $J^lM = M \cap J^l N$ for all $l \le L$.  As a  consequence of Theorem \ref{thm4.5}, we obtain:

\begin{theorem}  \label{thm5.1} Suppose that $\la$ is a truncated path algebra and $\overline{\laySS}$ an irreducible component of $\modlad$ with generic module $G$.  If $G' \subseteq G$ is a layer-stably embedded submodule of $G$ with $\SS(G') = \SS'$ and $\udim G' = \bd'$, then $\overline{\Rep \SS'}$ is an irreducible component of $\Rep_{\bd'} (\la)$ with generic module $G'$. 
\end{theorem}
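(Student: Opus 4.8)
The plan is to reduce the statement to two facts: first, that $\SS'=\SS(G')$ is realizable, and second, that no \emph{other} realizable semisimple sequence $\SS''\neq\SS'$ governs a filtration of $G'$; by Theorem~\ref{thm4.5} (applied to $\bd'$) this will force $\overline{\laySS'}$ to be an irreducible component of $\Rep_{\bd'}(\la)$, and the fact that $G'$ then has every Morita-invariant generic property of the modules in this component needs an extra argument, which I address at the end. Realizability of $\SS'$ is automatic: $G'$ is an honest $\la$-module with radical layering $\SS'$, so $\Rep\SS'\neq\varnothing$. So the heart of the matter is the uniqueness statement about filtrations of $G'$.

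First I would exploit layer-stability to lift filtrations of $G'$ to filtrations of $G$. Suppose $\SS''$ is a realizable $\bd'$-dimensional semisimple sequence governing a filtration $G'=G'_0\supseteq G'_1\supseteq\cdots\supseteq G'_{L+1}=0$. Since $G$ is a generic module for the component $\overline{\laySS}$ of $\Rep_{\bd}(\la)$, Theorem~\ref{thm4.5} together with Corollary~\ref{cor3.11} tells us that $\SS=\SS(G)$ is the \emph{only} realizable semisimple sequence governing a filtration of $G$ (this is exactly the characterization in Theorem~C). So it suffices to manufacture, from the $\SS''$-filtration of $G'$, a realizable semisimple sequence $\SS^{\sharp}$ with dimension vector $\bd$ governing a filtration of $G$, in such a way that $\SS^{\sharp}$ determines $\SS''$. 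The natural candidate is $\SS^{\sharp}_l:=\SS''_l\oplus(J^lG/J^{l+1}G)\big/\big(\text{contribution of }G'\big)$ — more precisely, I would set $M_l:=G'_l+J^lG$ and check, using $J^lG'=G'\cap J^lG$, that $G=M_0\supseteq M_1\supseteq\cdots\supseteq M_{L+1}=J^{L+1}G=0$ is a filtration of $G$ whose $l$-th factor is $\SS''_l\oplus \big(J^lG/(J^{l+1}G+G'_l\cap J^lG)\big)$; layer-stability is precisely what makes the quotient $J^lG/(J^{l+1}G+G'_l\cap J^lG)$ equal to $J^lG/(J^{l+1}G+J^lG')$, a semisimple module independent of the choice of $\SS''$-filtration of $G'$ in a suitable sense. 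Let $\SS^{\sharp}$ be the resulting semisimple sequence. It governs a filtration of $G$ by construction, and it is realizable: the Realizability Criterion~\ref{crit4.1} is about the existence of an abstract skeleton, equivalently the two-term inequalities $\udim\SS^{\sharp}_l\le(\udim\SS^{\sharp}_{l-1})\cdot\bB$, and one checks these hold because both $\SS''$ (realizable by hypothesis) and the ``complementary'' layers coming from $G$ satisfy the analogous inequalities, and the inequalities are additive in a way compatible with direct sums of layers. Hence $\SS^{\sharp}=\SS(G)=\SS$, which in particular pins down $\SS''$ as $\SS'$ (the $\SS''$-part being recovered from $\SS^{\sharp}$ minus the fixed complementary layers). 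This establishes that $\SS'$ is the only realizable semisimple sequence governing a filtration of $G'$, so Theorem~\ref{thm4.5} yields that $\overline{\laySS'}$ is an irreducible component of $\Rep_{\bd'}(\la)$.

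It remains to see that $G'$ is actually \emph{generic} for this component. I would argue this via the Crawley-Boevey--Schr\"oer result quoted in 5.C together with the generic module machinery of Section~5.A: by 5.A, the component $\overline{\laySS'}$ has its own explicitly constructed generic module $H=G(\SS')$, built from a skeleton of layering $\SS'$ with algebraically independent structure constants. What must be checked is that $G'$ and $H$ agree up to a Morita self-equivalence of $\lamod$ induced by a $K$-automorphism over $\overline{K_0}$. Here I would use that $G$ itself was built (5.A) from algebraically independent scalars, that a skeleton of $G'$ extends to a skeleton of $G$ (again using layer-stability, so that a full sequence of top elements and a compatible skeleton of $G'$ sit inside those of $G$), and that restricting an algebraically independent family of scalars to a subfamily again yields an algebraically independent family — so the structure constants of $G'$ relative to such a skeleton are algebraically independent over $K_0$, which is exactly the defining property of $G(\SS')$ in 5.A. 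Therefore $G'\cong G(\SS')$ up to the allowed Morita self-equivalence, and $G'$ is generic for $\overline{\laySS'}$.

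The main obstacle I anticipate is the bookkeeping in the middle paragraph: verifying cleanly that $M_l=G'_l+J^lG$ really is a submodule chain with $JM_l\subseteq M_{l+1}$, that its layers are semisimple with the claimed dimension vectors, and — the subtle point — that the ``complementary'' layers $J^lG/(J^{l+1}G+J^lG')$ are genuinely independent of the chosen $\SS''$-filtration, so that knowing $\SS^{\sharp}=\SS$ forces $\SS''=\SS'$ rather than merely bounding it. This is where layer-stability $J^lG'=G'\cap J^lG$ is used in an essential (not merely convenient) way: without it the intersection $G'_l\cap J^lG$ need not be controlled and the lifted filtration need not have semisimple factors of the right dimension vectors. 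A secondary technical point is confirming realizability of $\SS^{\sharp}$ from Criterion~\ref{crit4.1}; I expect this to be a short direct computation with the two-term inequalities, but it should be carried out explicitly since non-realizable sequences can and do govern filtrations of generic modules (the Caveat in 5.A), so this step cannot be skipped.
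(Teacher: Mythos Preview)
Your core approach coincides with the paper's: the lifted filtration $M_l = G'_l + J^lG$ is literally the paper's $G_l = H_l + U_l$. The paper first chooses recursive complements $U_l$ of $J^lG'$ in $J^lG$ modulo $J^{l+1}G$, with $J^{l+1}G \subseteq U_l$, so that $U_l + J^lG' = J^lG$; since $J^lG' \subseteq G'_l$, this gives $G_l = G'_l + U_l = G'_l + J^lG = M_l$. The paper phrases the argument as a contradiction ($\SS'' < \SS'$ forces $\SShat < \SS$), whereas you argue directly that the lifted sequence must equal $\SS$; both are fine.

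Two places where your write-up is looser than the paper's. First, realizability of $\SS^{\sharp}$: your phrase ``the inequalities are additive in a way compatible with direct sums'' is correct but should be pinned down. The clean way is to observe that the ``complementary'' layer has dimension vector $\udim\SS_l - \udim\SS'_l$, which, thanks to layer-stability, equals $\udim\bigl(J^l(G/G')/J^{l+1}(G/G')\bigr)$; hence $\SS^{\sharp}$ has the same layer dimension vectors as $\SS'' \oplus \SS(G/G')$, and since both summands are radical layerings of honest modules, Criterion~\ref{crit4.1} gives realizability by addition. (The paper instead uses the inclusion $U_l \subseteq JU_{l-1}$ to verify the two-term inequalities directly; your route is arguably shorter.)

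Second, genericity of $G'$: the paper dispatches this in one line by citing \cite[Corollary 3.2]{irredcompII}, which says that a layer-stably embedded submodule of a generic module is generic for its own radical layering. Your skeleton argument would need more care: even after choosing compatible top elements, a $\sigma'$-critical path for $G'$ need not be $\sigma$-critical for $G$ (it may lie in $\sigma \setminus \sigma'$), so the structure constants of $G'$ are not a priori a subfamily of those of $G$. This is not fatal, but it is more delicate than you indicate, and the cited result is the efficient way around it.
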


\begin{proof}  Let $H := G'$ be layer-stably embedded in $G$.  From \cite[Corollary 3.2]{irredcompII} we know that $H$ is generic for $\Rep \SS' = \Rep \SS(H)$.  Thus only the status of $\overline{\Rep \SS'}$ as a potential component of $\Rep_{\bd'} (\la)$ needs to be addressed.   

Assume that $\Rep \SS'$ fails to be an irreducible component of $\Rep_{\bd'} (\la)$.  In view of Theorem \ref{thm4.5}, this means that $H$ has a filtration governed by some realizable semisimple sequence $\SS''$ which is strictly smaller than $\SS'$, say $H = H_0 \supseteq H_1 \supseteq \cdots \supseteq H_L \supseteq H_{L+1} = 0$; by definition, $\SS''_l = H_l/ H_{l+1}$.   We aim at constructing a submodule filtration $G = G_0 \supseteq \cdots \supseteq G_L \supseteq 0$ which, in turn, is governed by a realizable semisimple sequence $\SShat$ strictly smaller than $\SS$.  Another application of Theorem \ref{thm4.5} will then show that $\overline{\laySS}$ is not an irreducible component of $\modlad$, contrary to our hypothesis.

For $l \le L$, let $\pi_l: G \rightarrow G/ J^{l+1} G$ denote the quotient map.  We recursively choose submodules $U_l$ of $J^l G$ such that 
\begin{equation}  \label{5.1.1}
J^{l+1}G \subseteq U_l \subseteq J^lG, \quad U_l \subseteq JU_{l-1}, \quad \text{and} \quad 
J^l G/ J^{l+1} G = \pi_l(J^l H) \oplus \pi_l (U_l).
\end{equation}
First, semisimplicity of $G/JG$ implies that $G/JG = \pi_0(H) \oplus \pi_0(U_0)$ for some $U_0 \subseteq G$, and since $U_0$ may be replaced by $U_0 + JG$, there is no loss of generality in assuming that $JG \subseteq U_0$. If $U_0,\dots,U_k$ for some $k<L$ have been chosen so as to satisfy \eqref{5.1.1}, we have $J^kG = J^kH + U_k$ by Nakayama's Lemma, whence $J^{k+1}G = J^{k+1}H + JU_k$. Consequently, $J^{k+1}G/J^{k+2}G = \pi_{k+1}(J^{k+1}H) \oplus \pi_{k+1}(U_{k+1})$ for some $U_{k+1} \subseteq JU_k$. On replacing $U_{k+1}$ by $U_{k+1}+J^{k+2}G$, we obtain \eqref{5.1.1} for $l=k+1$. Finally, set $U_{L+1} := 0$.

Now define $G_l := H_l + U_l$ for $l \le L+1$. That the consecutive factors of the sequence 
\begin{equation}  \label{5.1.2}
G = G_0 \supseteq G_1 \supseteq \cdots \supseteq G_L \supseteq G_{L+1} = 0
\end{equation} 
are semisimple, i.e., $JG_l \subseteq G_{l+1}$ for $l \le L$, is straightforward from our construction. Indeed, $JH_l \subseteq H_{l+1}$ and $JU_l \subseteq J^{l+1}G = J^{l+1}H + U_{l+1} \subseteq H_{l+1} + U_{l+1}$. Let $\SShat$ be the semisimple sequence governing the filtration \eqref{5.1.2}. Remark \ref{rem3.7}(2) tells us that $\SShat \le \SS$. 

Suppose $m$ is minimal with the property that $J^m H  \subsetneqq H_m$. Such an index $m$ exists, since $\SS'' < \SS'$. Then $m \ge 1$. Using layer-stability of $H$ in $G$, we derive $J^m G = J^m H + U_m\subsetneqq H_m + U_m = G_m$. On the other hand, $G_l = J^lG$ for $l<m$, so
 that the first discrepancy between the downward filtration \eqref{5.1.2} and the radical filtration of $G$ occurs at $l = m$.  More specifically, $\udim \SShat_{m-1} = \udim (G_{m-1} / G_m) <  \udim (G_{m-1}/ J^m G) =  \udim J^{m-1}G / J^m G = \udim \SS_{m-1}$.  This yields $\SShat <  \SS$. 

It remains to be verified that $\SShat$ is realizable. To do so, we make repeated use of Criterion \ref{crit4.1}. Again, $\bB$ is the adjacency matrix of $Q$. First we note that realizability of $\SS$ and $\SS''$  entails
\begin{equation}  \label{5.1.3}
\udim J^lG/J^{l+1}G \le (\udim J^{l-1}G/J^lG)\cdot \bB \quad\text{and}\quad \udim H_l/H_{l+1} \le (\udim H_{l-1}/H_l)\cdot \bB
\end{equation}
for $1\le l\le L$. Therefore $\,\udim G_l/G_{l+1} \le (\udim G_{l-1}/G_l) \cdot \bB\,$ for $1\le l\le m-2$. 

Invoking \eqref{5.1.1}, we find that, for $1\le l\le L$,
\smallskip

\centerline{$G_l/J^{l+1}G = (H_l+J^{l+1}G)/J^{l+1}G \oplus (U_l/J^{l+1}G)\ \ \text{and}$}
\smallskip

\centerline{$  G_{l+1}/J^{l+1}G = (H_{l+1}+ J^{l+1}G)/J^{l+1}G$,}
\smallskip

\noindent where the sum in the first equation is direct because $H_l \cap U_l \subseteq H\cap J^lG = J^lH$ implies $H_l \cap U_l = J^lH \cap U_l \subseteq J^{l+1}G$.
We also have $(H_l+J^{l+1}G)/(H_{l+1}+ J^{l+1}G) \cong H_l/H_{l+1}$, since layer-stability of $H$ in $G$ guarantees that $H_l\cap J^{l+1}G \subseteq J^{l+1}H \subseteq H_{l+1}$. Consequently,
\begin{equation}  \label{5.1.5}
G_l/G_{l+1} \cong (H_l/H_{l+1}) \oplus (U_l/J^{l+1}G), \quad\text{for} \;\; 1\le l\le L.
\end{equation}

Since $U_l \subseteq JU_{l-1}$, we moreover obtain
\begin{equation}  \label{5.1.6}
\udim U_l/J^{l+1}G \le \udim JU_{l-1}/J(J^lG) \le (\udim U_{l-1}/J^lG) \cdot \bB, \quad\text{for} \;\; 1\le l\le L.
\end{equation}
Combining \eqref{5.1.6} with \eqref{5.1.3} and \eqref{5.1.5} yields $\udim G_l/G_{l+1} \le (\udim G_{l-1}/G_l) \cdot \bB$ for $1\le l\le L$, which shows that $\SShat$ is realizable as required. 
\end{proof}

The following examples demonstrate: {\bf (a)} that the conclusion of \ref{thm5.1} does not extend to arbitrary top-stably embedded submodules $G'$ of $G$, i.e., to submodules $G'$ satisfying only $J G' = G' \cap JG$, and {\bf (b)} that \ref{thm5.1} has no analogue for nontruncated $\la$ in general.

\begin{examples}  \label{ex5.2}  \textbf{Demonstrating the sharpness of \ref{thm5.1}.} Consider the quivers
$$\xymatrixrowsep{1.5pc}\xymatrixcolsep{1.75pc}
\xymatrix{
Q_1 :  &1 \ar[r] \ar@/^1pc/[rr] &2 \ar[r] &3  &4 \ar@/^1pc/[ll] &5 \ar[l] &&Q_2 : &4 \ar@/^1pc/[rr]^{\delta} &1 \ar[r] \ar@/_1pc/[rr] &2 \ar[r]^{\beta} &3
}$$
\smallskip

\textbf{(a)}  Let $\la$ be the truncated path algebra of Loewy length $3$ based on the quiver $Q_1$. For $\bd = (1, 1, 1, 1, 1)$, the variety $\modlad$ has two irreducible components, with generic radical layerings $\SS^{(1)} := (S_1 \oplus S_5, S_3 \oplus S_4, S_2)$ and $\SS^{(2)} := (S_1 \oplus S_5, S_2 \oplus S_4, S_3)$ and generic modules $G_1$ and $G_2$ as graphed below.
$$\xymatrixrowsep{1pc}\xymatrixcolsep{0.7pc}
\xymatrix{
&1 \edge[d] \dashedge[ddrr] &&5 \edge[d] &&&&&1 \edge[d] \dashedge@/^1pc/[dd] &&5 \edge[d]  \\
G_1 :  &3 &&4 \edge[d]  &&&&G_2 :  &2 \edge[d] &{\;\;\;\textstyle\bigoplus} &4  \\
&&&2  &&&&&3
}$$
Clearly, the top-stably embedded submodule $G'$ of $G_1$ generated by any element $z = e_1 z \in G_1$ has dimension vector $\bd' := (1,1,1,0,0)$.  On the other hand, the sequence $ \SS(G') = (S_1, S_2 \oplus S_3, 0)$ fails to be the generic radical layering of an irreducible component of $\Rep_{\bd'}(\la)$, the latter variety being irreducible with uniserial generic modules.
\smallskip

\textbf{(b)}  Now let $\la = KQ_2 / \langle \beta \delta \rangle$ and $\bd := (1,1,1,1)$.  Then, again, $\modlad$ consists of two irreducible components. Their generic modules are graphed below:
$$\xymatrixrowsep{1pc}\xymatrixcolsep{0.7pc}
\xymatrix{
&&1 \edge[dl] \edge[dr] &&4 \edge[dl]  &&&&&1 \edge[d] \dashedge@/^1pc/[dd] &&4 \drbl  \\
G_1 :  &3 &&2  &&&&&G_2 :  &2 \edge[d] &{\;\;\;\textstyle\bigoplus}  \\
&&&& &&&&&3
}$$
The submodule $G'$ of $G_1$ generated by any element $z = e_1 z \in G_1$ has dimension vector $\bd' := (1,1,1,0)$ and is layer-stably embedded in $G_1$ this time.  Nonetheless, $\overline{\Rep \SS(G')}$ fails to be an irreducible component of $\Rep_{\bd'} (\la)$.  Indeed, once again, $\Rep_{\bd'} (\la)$ is irreducible and its generic modules are uniserial.
\qed
\end{examples}

\section{Examples illustrating the theory.  The interplay $\modlad \longleftrightarrow \Rep_\bd(\latrunc)$}
\label{sec6}

\subsection*{6.A.~Illustrations of the truncated case}  \hfill\par
\smallskip

{\it In this subsection, $\la$ denotes a truncated path algebra. \/}

In sifting the radical layerings of the components of $\modlad$ out of the set $\Seq(\bd)$, it is computationally advantageous to supplement $\Gabull$ by the map $\Theta$ of equation (1.1) in Section \ref{intro}, or by the upgraded map $\Theta^+$ to be introduced next.    
  
Example 4.8 in \cite{irredcompI} shows that $\Theta$ fails to detect all irreducible components in the general truncated case.  However, in that instance (as in many others), supplementing $\Theta$ by path ranks compensates for the blind spots of $\Theta$.  Here the {\it path rank\/} of a finite dimensional $\la$-module $M$ is the tuple $( \dim\, pM)_p \in \ZZ^{\tau}$, where $\tau$ is the set of paths in $KQ \setminus I$. 
 Set $f(M)  = (- \dim\, pM)_p$, and let $f^*(M)$ be the negative of the path rank of the right $\la$-module $D(M)$.  Clearly, the map
$$\Theta^+: \modlad \rightarrow \Seq(\bd) \times \Seq(\bd) \times \ZZ^\tau \times \ZZ^\tau,  \ \ x \mapsto \bigl(\SS(M_x), \SS^*(M_x), f(M_x), f^*(M_x) \bigr),$$
is in turn upper semicontinuous.  Therefore, it is generically constant on the varieties $\laySS$.  In particular, those closures $\overline{\laySS}$ on which $\Theta^{+}$ attains its minimal values (relative to the componentwise partial order on the codomain) are components of $\modlad$.   Yet, part (c) of the next example attests to the fact that the augmented upper semicontinuous map $\Theta^+$ still leaves certain components undetected in general.  We use $\Gabull$ to fill in what $\Theta^+$ fails to pick up.  

\begin{example}  \label{ex6.1} Let $\la$ be the truncated path algebra of Loewy length $4$ based on the quiver $Q$ of  Example \ref{ex2.4}, and take $\bd = (2,2)$.  The semisimple sequences which are in the running as potential generic radical layerings of components of $\modlad$ are:
$$\SS^{(1)} = (S_1, S_2, S_1, S_2), \, \SS^{(2)} = (S_2, S_1, S_2, S_1), \, \SS^{(3)} =  (S_1, S_2^2, S_1, 0),\, \SS^{(4)} = (S_2, S_1^2, S_2, 0),$$ 
$$\SS^{(5)} = (S_1^2, S_2^2, 0, 0), \  \SS^{(6)} = (S_2^2, S_1^2, 0, 0), \ \SS^{(7)} = (S_1 \oplus S_2, S_1 \oplus S_2, 0, 0),$$
$$\SS^{(8)} = (S_1 \oplus S_2, S_1, S_2, 0), \ \ \text{and} \ \ \SS^{(9)} = (S_1 \oplus S_2, S_2, S_1, 0).$$
The list excludes the sequences which are not realizable for any choice of $r$ and $s$, such as  $(S_1, S_1 \oplus S_2, S_2, 0)$ and $(S_1, S_2, S_1 \oplus S_2, 0)$, as well as the radical layering $\SS^{(0)}$ of the semisimple module, given that $\laySS^{(0)}$ is contained in all nonempty varieties $\overline{\laySS}$.  Except for $\SS^{(3)}$ and $\SS^{(4)}$, all sequences on the list are realizable for arbitrary positive integers $r,s$. 

Theorem \ref{thm4.5} allows us to discard $\SS^{(j)}$ for $j = 7, 8, 9$ from the list of possible generic radical layerings of irreducible components:  Indeed, the modules in $\Rep \SS^{(7)}$ are generically decomposable, which makes it evident that they have filtrations governed by both $\SS^{(1)}$ and $\SS^{(2)}$.  
Any generic module $G_8$ for $\Rep \SS^{(8)}$ has hypergraph 
$$\xymatrixrowsep{2pc}\xymatrixcolsep{1.5pc}
\xymatrix{
1 \dashedge@/_2ex/[ddrr]^{\alpha_1} \dashedge@/_8ex/[ddrr]_(0.3){\alpha_r}^(0.3){\cdots}  &&2 \edge[d]_{\beta_1} \dashedge@/^2ex/[d]^{\beta_2} \dashedge@/^10ex/[d]^{\beta_s}_{\cdots}  \\
&&1  \edge[d]_(0.4){\alpha_1} \dashedge@/^2ex/[d]^{\alpha_2} \dashedge@/^10ex/[d]^{\alpha_r}_{\cdots}  \\
&&2
}$$
\noindent Clearly, $G_8$ is generated by elements $z_1 = e_1 z_1$ and $z_2 = e_2 z_2$, and the following submodule chain is governed by $\SS^{(1)}$:  
$$G_8 \supseteq \la z_2 \supseteq \la \beta_1 z_2 \supseteq \la \alpha_1 \beta_1 z_2 \supseteq 0.$$ 
Consequently, $\Rep \SS^{(8)} \subseteq \Filt \SS^{(1)}$ by \ref{cor4.4}.  An analogous argument shows $\Rep \SS^{(9)} \subseteq \Filt \SS^{(2)}$. 

On the other hand, $\C_j : = \overline{\Rep \SS^{(j)}}$ for $j = 1,2$ are components of $\modlad$ for all choices of $r,s \ge 1$ by Theorem \ref{thm4.5}, since $\Gamma(U) = 1$ for any uniserial module $U$.  Hence only the sequences $\SS^{(j)}$ for $3 \le j \le 6$ require discussion by cases. We consider only the cases when $r \ge s$, due to the symmetry of the quiver $Q$.
\smallskip

\textbf{(a)} Let $r = s = 1$.  Then $\modlad$ has precisely two irreducible components, namely $\C_j =  \overline{\Rep \SS^{(j)} }$ for $j = 1, 2$.  We rule out the remaining sequences. First, $\SS^{(3)}$ and $\SS^{(4)}$ fail to be realizable when $r=s=1$. Generically, the modules in $\Rep \SS^{(5)}$ are direct sums of two uniserials with radical layering $(S_1,S_2,0,0)$, and such a module has a filtration governed by $\SS^{(1)}$.  Thus, $\Rep \SS^{(5)} \subseteq \Filt \SS^{(1)} = \C_1$. Similarly, $\Rep \SS^{(6)} \subseteq \C_2$.
\smallskip

\textbf{(b)} Let $r = 2$, $s = 1$.  Then $\modlad$ again has precisely two irreducible components,  $\C_1$ and $\C_2$.  Concerning $\SS^{(3)}$:  A generic module $G_3$ for $\Rep \SS^{(3)}$ has a hypergraph of the form 
$$\xymatrixrowsep{1.3pc}\xymatrixcolsep{1pc}
\xymatrix{
&1 \edge[dl]_{\alpha_1} \edge[dr]^{\alpha_2}  \\
2 \edge[dr]_{\beta_1} &&2 \dashedge[dl]^{\beta_1}  \\
&1
}$$
In particular, the socle of $G_3 = \la z$ contains a copy of $S_2$, namely $\la (\alpha_1 - k \alpha_2)z$ for a suitable scalar $k \in K^*$.  We deduce that the submodule chain 
$$G_3 \supseteq JG_3  \supseteq \la (\alpha_1 - k \alpha_2) z + \la \beta_1 \alpha_1 z \supseteq \la (\alpha_1 - k \alpha_2) z \supseteq 0$$
is governed by $\SS^{(1)}$, showing $\Rep \SS^{(3)} \subseteq \Filt \SS^{(1)} = \C_1$.  (On the side, we mention that $\Rep \SS^{(3)}$ is not contained in $\C_2$ because the sequences $\SS^{(2)}$ and $\SS^{(3)}$ are not comparable under the dominance order.) 

The sequence $\SS^{(4)}$ fails to be realizable for $s = 1$.
As for $\SS^{(5)}$:  Generically the modules in $\Rep \SS^{(5)}$ decompose in the form shown at the end of \ref{ex2.4}(a), whence $\Rep \SS^{(5)} \subseteq \C_1$.  (Clearly, $\Rep \SS^{(5)}  \not\subseteq \C_2$, because $\SS^{(5)}$ is not comparable to $\SS^{(2)}$.)  A routine check shows that $\Rep \SS^{(6)}$ is contained in $\C_2$, but not in $\C_1$.  
\smallskip

\textbf{(c)} Let $r \ge 3$, $s = 1$.  Then the variety $\modlad$ has three  irreducible components, namely $\C_j = \overline{\Rep \SS^{(j)} }$, for $j = 1, 2, 5$.  The status of $\C_1$, $\C_2$ being clear, we focus on the variety $\Rep \SS^{(5)}$ with generic module $G_5$ as depicted at the end of \ref{ex2.4}(b).  Again, we prove our claim regarding $\C_5$ via Theorem \ref{thm4.5}:  To see that $\SS^{(5)} = \SS(G_5)$ is the only realizable semisimple sequence governing a filtration of $G_5$, we note that the only other realizable sequence not ruled out by $\Theta$ (i.e., with a $\Theta$-value $< \Theta(G_5)$) is $\SS^{(1)}$.  To verify, without computational effort, that $\SS^{(1)}$ does not govern any filtration of $G_5$, it suffices to observe that, for any module $N$ in $\Filt \SS^{(1)}$, we have $S_1 \subseteq N/\la x$ for some $x \in e_2 N$. On the other hand, it is readily checked that $S_1 \nsubseteq G_5/ \la x$ for all elements $x  \in e_2 G_5$, which shows $\Gamma(G_5) = 1$ as required.  To link up with the remarks preceding \ref{ex6.1} finally, we point out that $\Theta^+ (G_1) < \Theta^+(G_5)$, whence the $\Theta^+$-test fails to detect the status of $\overline{\Rep \SS^{(5)} }$ as an irreducible component of $\modlad$.
 
To see that $\SS^{(j)}$ for $j = 3, 4, 6$ do not arise as generic radical layerings of irreducible components of $\modlad$, one  may follow the patterns of part (b).
\smallskip

\textbf{(d)} Moving to $r \ge 3$ and $s = 2$ raises the number of irreducible components of $\modlad$ to $5$.  We first show that $\overline{\Rep \SS^{(3)}}$ is now a component.  Generically, the modules  in $\Rep \SS^{(3)}$ have hypergraph 
$$\xymatrixrowsep{1.8pc}\xymatrixcolsep{1pc}
\xymatrix{
&1 \edge[dl]_{\alpha_1} \edge[dr]^{\alpha_2}  &&&\ar@{}[d]|{\;\;\cdots}  \\
2 \horizpool{6} \edge[dr]_{\beta_1}  &&2 \dashedge[dl]^{\beta_2}  & \save+<0ex,-0.1ex> \dashedge@/_3ex/[ull]_(0.3){\alpha_3} \restore &&& \save+<0ex,-0.1ex> \dashedge@/_5ex/[ulllll]_(0.2){\alpha_r} \restore  \\
&1
}$$
Again, the only $\overline{\Rep \SS^{(j)}}$ (for $j\le 6$) potentially containing $\Rep \SS^{(3)}$ is $\overline{\Rep \SS^{(1)}} = \Filt \SS^{(1)}$.  Since the modules in $\Filt \SS^{(1)}$ clearly contain a copy of $S_2$ in their socle, while $G_3$ does not, this possibility is ruled out, and our claim is justified.

The discussion of $\overline{\Rep \SS^{(4)}}$ is analogous, in that  the only $\overline{\Rep \SS^{(j)}}$ (for $j\le 6$) potentially containing $\Rep \SS^{(4)}$ is $\overline{\Rep \SS^{(2)}} = \Filt \SS^{(2)}$, and the modules in $\Filt \SS^{(2)}$ contain a copy of $S_1$ in their socle, while a generic module for $\Rep \SS^{(4)}$ does not.

As in part (c), one shows that $\overline{\Rep \SS^{(5)}}$ is a component of $\modlad$. 
On the other hand, $\overline{\Rep \SS^{(6)}}$ still fails to be a component;  the argument used in part (b) (in that case, to exclude $\overline{\Rep \SS^{(5)}}$ from the list of components for $r = 2$) may now be applied to $s = 2$.
\smallskip

\textbf{(e)}  Finally, let $r \ge 3$ and $s \ge 3$.  Then all of the varieties $\overline{\Rep \SS^{(j)}}$ for $j = 1, \dots, 6$ are irreducible components of $\modlad$.  The argument backing the status of $\SS^{(6)}$ follows the reasoning we used to confirm $\overline{\Rep \SS^{(5)} }$ as a component of $\modlad$ in part (c).  For $r = s = 3$, hypergraphs of generic modules for the components $\overline{\Rep \SS^{(j)} }$, $j = 1,3,5$, are shown below.  Due to symmetry, the generic structure of the modules in the remaining components is obtained by swapping the roles played by the vertices $1$ and $2$.
$$\xymatrixrowsep{1.8pc}\xymatrixcolsep{1.25pc}
\xymatrix{
1 \edge[d]_(0.4){\alpha_1} &&&  &&&&&1 \edge[dl]_(0.4){\alpha_1} \edge@/^5ex/[drrrr]^(0.75){\alpha_r}  \\
2 \edge[d]^(0.6){\beta_1} \dashedge@/_2ex/[d]_{\beta_2} \dashedge@/_10ex/[d]_{\beta_s}^{\cdots} & \save+<0ex,-0.1ex> \dashedge@/_/[ul]_{\alpha_2} \restore  && \save+<0ex,-0.1ex> \dashedge@/_5ex/[ulll]_(0.3){\alpha_r}^(0.4){\cdots} \restore  &&&&2 \horizpool{5} \edge[drr]^(0.7){\beta_1} \dashedge@/_2ex/[drr]_(0.6){\beta_2} \dashedge@/_10ex/[drr]_(0.25){\beta_s}^(0.25){\cdots} & \save+<0ex,-0.1ex> \dashedge[u]_(0.6){\alpha_2} \restore  && \save+<0ex,-0.1ex> \dashedge@/_3ex/[ull]_(0.3){\alpha_{r-1}}^(0.5){\cdots} \restore  &&2 \dashedge[dlll]_(0.7){\beta_1}  \\
1 \edge[d]_(0.4){\alpha_1}   &&&  &&&&&&1 \dashedge@/_5ex/[urrr]_(0.8){\beta_s}^(0.7){\cdots}  \\
2 \save[0,0]+(0,3);[0,0]+(0,3) **\crv{~*=<2.5pt>{.} [0,0]+(4,3) &[-2,1]+(2,-3) &[-2,0]+(0,-3) &[-2,0]+(-3,-3) &[-2,0]+(-3,0) &[-2,0]+(-3,4) &[-2,0]+(0,4) &[-2,3]+(0,4) &[-2,3]+(5,4) &[-2,3]+(5,0) &[-2,3]+(5,-4) &[0,0]+(4,-4)  &[0,0]+(0,-4) &[0,0]+(-3,-4) &[0,0]+(-3,0) &[0,0]+(-3,3)}\restore &&&  \\  
&&1 \edge[d]_(0.4){\alpha_1}  &&&&&1 \edge[d]_(0.4){\alpha_1}  \\ 
&&2 \horizpool{8} & \save+<0ex,-0.1ex> \dashedge@/_/[ul]_{\alpha_2} \restore && \save+<0ex,-0.1ex> \dashedge@/_5ex/[ulll]_(0.3){\alpha_r}^(0.4){\cdots} \restore  &&2 &\save+<0ex,-0.1ex> \dashedge@/_/[ul]_{\alpha_2} \restore && \save+<0ex,-0.1ex> \dashedge@/_5ex/[ulll]_(0.3){\alpha_r}^(0.4){\cdots} \restore  &&\square
}$$
\end{example}
\medskip
 
 \subsection*{Consequences of the ``truncated" theory, exemplified by \ref{ex6.1}}  \hfill\par
\smallskip

\textbf{(1) Allocation of modules to the components.} Once the irreducible components $\overline{\laySS^{(j)}}$ of $\modlad$ have been pinned down, by way of Theorem \ref{thm4.5} say, one is in a position to list the components containing any given $\bd$-dimensional $\la$-module $M$.  Indeed, compiling this list amounts to deciding which of the $\SS^{(j)}$ govern filtrations of $M$; as was pointed out in Section 5.B, there is an algorithm for carrying out this task.  

In Example \ref{ex6.1} with $r = 3$ and $s \ge 1$, for instance, any module $M$ with hypergraph 
$$\xymatrixrowsep{2pc}\xymatrixcolsep{1.5pc}
\xymatrix{
1 \edge[d]_{\alpha_1}  &&&&1  \edge[d]_(0.4){\alpha_1} \dashedge@/^2ex/[d]^{\alpha_2} \dashedge@/^7ex/[d]^(0.4){\alpha_3}    \\
2 \save[0,0]+(0,3.5);[0,0]+(0,3.5) **\crv{~*=<2.5pt>{.} [0,4]+(0,3.5) &[0,4]+(6,3.5) &[0,4]+(6,0) &[0,4]+(6,-3.5) &[0,4]+(0,-3.5) &[0,0]+(0,-3.5) &[0,0]+(-4,-3.5) &[0,0]+(-4,0) &[0,0]+(-4,3.5)}\restore & \save+<0ex,-0.1ex> \dashedge@/_/[ul]_(0.4){\alpha_2} \restore & \save+<0ex,-0.1ex> \dashedge@/_4ex/[ull]_(0.3){\alpha_3} \restore &&2
}$$
\smallskip

 \noindent belongs to the components $\C_1 = {\Filt \SS^{(1)}}$ and $\C_5 = {\Filt \SS^{(5)}}$, but does not have a filtration governed by $\SS^{(j)}$ for $j\in \{2, 3, 4 ,6\}$.  Therefore, $M$ belongs to precisely two of the irreducible components of $\modlad$, namely to $\C_1$ and $\C_5$.   
 \smallskip
 
\textbf{(2) Comparing the generic behavior of the finite dimensional $\la$-modules to that of the finite dimensional $KQ$-modules.}  Examples \ref{ex6.1}(a -- e) place a spotlight on the fact that, in the presence of oriented cycles, the generic representation theory of the path algebra $KQ$ may be ``disjoint" from that of its truncations in the following sense:  For $r,s \ge 1$, we have $J(KQ) = 0$, and for $\bd = (2,2)$ the modules in the irreducible variety $\Rep_\bd (KQ)$ are generically simple.  Since generically the latter modules are not annihilated by any path in $KQ$, we find the variety $\Rep_\bd\bigl(KQ/ \langle \text{the paths of length}\ 4 \rangle \bigr)$ to be contained in the boundary of a dense open subset of $\Rep_\bd(KQ)$.

\subsection*{6.B.~Information on the components of $\modlad$ from those of $\Rep_\bd(\latrunc)$}  \hfill\par
\smallskip

We conclude with a first installment of observations on how to pull information about the components of $\modlad$ from knowledge of the components of $\Rep_\bd (\latrunc)$.
 Suppose that the distinct irreducible components of $\Rep_\bd(\latrunc)$ are 
$$\overline{\Rep_{\latrunc}\SS^{(1)}} = \Filt_{\latrunc}(\SS^{(1)}),\ \ \dots\ ,\  \overline{\Rep_{\latrunc}\SS^{(m)}} = \Filt_{\latrunc}(\SS^{(m)}).$$
Moreover, suppose that $\C$ is an irreducible component of some $\Rep_\la \SS$ with generic module $G$ (recall that, for any $\la$, these components and their generic modules may be algorithmically accessed from quiver and relations of $\la$).  To compare with $\Rep_\bd(\latrunc)$, one first determines which among the $\SS^{(j)}$ govern a filtration of $G$.  Suppose the pertinent sequences are $\SS^{(1)}, \dots, \SS^{(r)}$, that is, $\C \subseteq \Filt_\la \SS^{(j)}$ precisely when $j \le r$.

\begin{observation}  \label{obs6.4}  The closure $\overline{\C}$ is an irreducible component of $\modlad$ if and only if $\,\overline{\C}$ is maximal irreducible in $\Filt_\la \SS^{(j)}$ for all $j \le r$.
\end{observation}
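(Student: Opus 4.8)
The plan is to run everything through the inclusion $\modlad \subseteq \Rep_\bd(\latrunc)$, which is a \emph{closed} immersion: the variety $\modlad=\Rep_\bd(\la)$ is cut out of $\Rep_\bd(\latrunc)$ by the relations in $I$ of length $\le L$ (the longer paths already act as zero on every $\latrunc$-module). Two preliminary observations are needed. First, for every semisimple sequence $\SS$ one has $\Filt_\la\SS=\Filt_{\latrunc}\SS\cap\modlad$, since a $\la$-module admits an $\SS$-filtration as a $\la$-module exactly when it admits one as a $\latrunc$-module (same submodule lattice, same radical); by Theorem \ref{thm3.8}, applied to $\la$ and to $\latrunc$, both sides are closed, and taking the union over $j$ gives $\modlad=\bigcup_{j=1}^{m}\Filt_\la(\SS^{(j)})$ because $\Rep_\bd(\latrunc)=\bigcup_{j=1}^m\Filt_{\latrunc}(\SS^{(j)})$ is the decomposition into components. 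Second, since $\overline{\C}$ is irreducible and closed in $\modlad$, it is an irreducible component of $\modlad$ precisely when it is maximal among \emph{all} irreducible subsets of $\modlad$.

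\textbf{Bookkeeping with $r$.} Next I would record the link between the index $r$ and containments of $\overline{\C}$. Using the observations above, the definition of $r$, and the fact recalled in 5.A that the generic module $G$ of $\C$ detects which realizable sequences generically govern filtrations of the modules in $\overline{\C}$, one obtains, for each $j\le m$,
\[
\overline{\C}\subseteq\Filt_{\latrunc}(\SS^{(j)}) \iff \overline{\C}\subseteq\Filt_\la(\SS^{(j)}) \iff \C\subseteq\Filt_\la(\SS^{(j)}) \iff j\le r .
\]
Note in passing that $r\ge 1$, since $G$ lies in some component of $\Rep_\bd(\latrunc)$.

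\textbf{The two implications.} The forward direction is then immediate: if $\overline{\C}$ is an irreducible component of $\modlad$, fix $j\le r$; by the equivalences $\overline{\C}\subseteq\Filt_\la(\SS^{(j)})\subseteq\modlad$, and any irreducible subset of $\Filt_\la(\SS^{(j)})$ containing $\overline{\C}$ is an irreducible subset of $\modlad$ containing the maximal irreducible set $\overline{\C}$, hence equals it; so $\overline{\C}$ is maximal irreducible in $\Filt_\la(\SS^{(j)})$. For the converse, assume $\overline{\C}$ is an irreducible component of $\Filt_\la(\SS^{(j)})$ for every $j\le r$, and let $Y\subseteq\modlad$ be irreducible with $\overline{\C}\subseteq Y$; replacing $Y$ by its closure in $\modlad$ we may take $Y$ closed, hence an irreducible closed subset of $\Rep_\bd(\latrunc)$, so $Y$ lies in some component $\Filt_{\latrunc}(\SS^{(j_0)})$. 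Then $\overline{\C}\subseteq Y\subseteq\Filt_{\latrunc}(\SS^{(j_0)})$, so the displayed equivalences force $j_0\le r$ and $Y\subseteq\Filt_{\latrunc}(\SS^{(j_0)})\cap\modlad=\Filt_\la(\SS^{(j_0)})$. Thus $Y$ is an irreducible subset of $\Filt_\la(\SS^{(j_0)})$ containing its irreducible component $\overline{\C}$, whence $Y=\overline{\C}$; so $\overline{\C}$ is maximal irreducible in $\modlad$, i.e., a component.

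\textbf{Expected difficulty.} Given Theorems \ref{thm3.8} and \ref{thm4.5}, I do not anticipate a genuine obstacle — the argument is pure topology and bookkeeping. The one step that wants care is the converse direction: seeing that an irreducible $Y$ wedged between $\overline{\C}$ and $\modlad$ is automatically trapped inside one of the finitely many, already-determined components $\Filt_{\latrunc}(\SS^{(j)})$ of $\Rep_\bd(\latrunc)$, and inside one with $j\le r$ at that. This is exactly where the closedness of $\modlad$ in $\Rep_\bd(\latrunc)$ and the prior knowledge of the components of $\Rep_\bd(\latrunc)$ are used, and it is what allows size comparisons for $\overline{\C}$ to be confined to the sets $\Filt_\la(\SS^{(j)})$, $j\le r$. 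A secondary point is simply to invoke $\Filt_\la\SS=\Filt_{\latrunc}\SS\cap\modlad$ and the generic-module reformulation of $\C\subseteq\Filt_\la\SS$ in the right places.
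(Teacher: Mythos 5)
Your proof is correct and takes essentially the same route as the paper: the paper's one-line argument is precisely that every irreducible $\D\subseteq\modlad$ containing $\overline{\C}$ lies in some $\modlad\cap\Filt_{\latrunc}\SS^{(j)}=\Filt_\la\SS^{(j)}$ with $j\le r$, which is exactly the content of your "converse" paragraph, and your forward direction is the same trivial observation. You have merely unpacked the terse "immediate from the fact that..." into explicit bookkeeping (the identity $\Filt_\la\SS=\Filt_{\latrunc}\SS\cap\modlad$, closedness of $\modlad$ in $\Rep_\bd(\latrunc)$, and the chain of equivalences characterizing $j\le r$), all of which is consistent with what the paper leaves implicit.
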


\begin{proof} The claim is immediate from the fact that every irreducible subvariety $\D$ of $\modlad$ which contains $\overline{\C}$ is contained in one of the intersections
$$\begin{matrix}
&&& \qquad \qquad \qquad \qquad \modlad\, \cap\, \Filt_{\latrunc} \SS^{(j)}\  =\  \Filt_\la \SS^{(j)}. &&& \qquad \qquad \qquad\qedhere
\end{matrix}$$
\end{proof}

This leads to a lower bound for the number of irreducible components of $\modlad$.  Computing it in specific instances typically requires a non-negligible effort, as it is not simply based on the {\it number\/} of components of $\Rep_\bd(\latrunc)$.  The bound is sharp in general.  Indeed, if  $\Delta$ denotes the algebra of \ref{ex6.1}(e) and $\la = \Delta / \langle \beta_i \alpha_j \beta_k \mid i,j,k \in \{1,2,3\}\rangle$, then $\Delta = \latrunc$ and the number of irreducible components of $\modlad$ coincides with the lower bound given below.  

\begin{corollary}  \label{cor6.5} Again, let $\bd$ be a dimension vector of a basic $K$-algebra $\la$, and adopt the above notation for the irreducible components of $\Rep_\bd(\latrunc)$. 
Moreover, set
$$A_j : = \Rep_\bd(\latrunc)\  \setminus \bigcup_{i \le m,\ i \ne j} \Filt_{\latrunc} \SS^{(i)} \ \ \ \ \ \text{for} \ j \le m.$$
Then the number of irreducible components of $\modlad$ is bounded from below by the number of $A_j$ which have nonempty intersection with $\modlad$. 
\end{corollary}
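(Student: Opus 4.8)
The plan is to set up an injection from the index set $\{\, j \le m : A_j \cap \modlad \ne \varnothing \,\}$ into the set of irreducible components of $\modlad$, which yields the asserted lower bound. Concretely, the construction will send each such index $j$ to an irreducible component of $\modlad$ passing through a chosen point of $A_j \cap \modlad$, and essentially all the work lies in checking that distinct indices receive distinct components.

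The first step is to record that $\modlad$ is the union of the finitely many \emph{closed} subsets $\Filt_\la \SS^{(1)}, \dots, \Filt_\la \SS^{(m)}$ of $\modlad$. Indeed, by hypothesis the varieties $\Filt_{\latrunc}\SS^{(1)}, \dots, \Filt_{\latrunc}\SS^{(m)}$ are the irreducible components of $\Rep_\bd(\latrunc)$ and hence cover it; intersecting this covering with the subvariety $\modlad$ and invoking the identity $\modlad \cap \Filt_{\latrunc}\SS^{(j)} = \Filt_\la \SS^{(j)}$ from the proof of Observation \ref{obs6.4} gives $\modlad = \bigcup_{j \le m} \Filt_\la\SS^{(j)}$, while Theorem \ref{thm3.8} ensures that each $\Filt_\la\SS^{(j)}$ is closed in $\modlad$. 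Consequently any irreducible subset of $\modlad$ — in particular any irreducible component — is contained in at least one $\Filt_\la\SS^{(j)}$.

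Next I would carry out the construction. Fix $j$ with $A_j \cap \modlad \ne \varnothing$, choose $x_j$ in this intersection and an irreducible component $\C_j$ of $\modlad$ with $x_j \in \C_j$. By the previous step, $\C_j \subseteq \Filt_\la\SS^{(i)}$ for some $i$. On the other hand, membership of $x_j$ in $A_j$ means $x_j \notin \Filt_{\latrunc}\SS^{(i)}$, hence $x_j \notin \Filt_\la\SS^{(i)}$, for every $i \ne j$; so $\C_j$ can be contained in none of the $\Filt_\la\SS^{(i)}$ with $i \ne j$, forcing $\C_j \subseteq \Filt_\la\SS^{(j)}$ and, in particular, $\C_j \not\subseteq \Filt_\la\SS^{(i)}$ for all $i \ne j$. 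Injectivity of $j \mapsto \C_j$ is then immediate: for $j \ne j'$ we have $\C_{j'} \subseteq \Filt_\la\SS^{(j')}$ but $\C_j \not\subseteq \Filt_\la\SS^{(j')}$, so $\C_j \ne \C_{j'}$.

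I do not expect a genuine obstacle here: the argument is in essence bookkeeping around the closed covering $\modlad = \bigcup_{j} \Filt_\la\SS^{(j)}$. The one point that deserves care is precisely that this really is a covering by \emph{closed} subsets, which is where Theorem \ref{thm3.8} enters, together with the translation $\modlad \cap \Filt_{\latrunc}\SS = \Filt_\la\SS$ between the truncated and the general picture; everything beyond that is purely set-theoretic.
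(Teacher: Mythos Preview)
Your proof is correct and follows essentially the same approach as the paper's: both pick, for each index $j$ with $A_j \cap \modlad \ne \varnothing$, an irreducible component of $\modlad$ through a point (resp.\ irreducible subvariety) of $A_j \cap \modlad$, and then use the closed covering $\modlad = \bigcup_i \Filt_\la \SS^{(i)}$ together with the defining property of $A_j$ to force this component into $\Filt_\la \SS^{(j)}$ and no other, whence the assignments are injective. Your write-up is slightly more explicit about invoking Theorem~\ref{thm3.8} for closedness, whereas the paper absorbs this into its reference to Observation~\ref{obs6.4}, but the substance is the same.
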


\begin{proof}  
Suppose $A_1, \dots, A_s$ are the $A_j$ which intersect $\modlad$ nontrivially, and let $\U_j$ be an irreducible subvariety of $A_j \cap \modlad$ for $j \le s$. Among the $\Filt_{\latrunc} \SS^{(i)}$, the variety $\Filt \SS^{(j)}$ is then the only one to contain $\U_j$.  Consequently, any maximal irreducible subset $\D_j$ of $\modlad$ containing $\U_j$ is an irreducible component of $\modlad$ by the preceding observation. By construction, the resulting $\D_j$ are pairwise different. 
\end{proof}



\end{document}